\newtheorem{theorem}{Theorem}[section]
\newtheorem{lemma}[theorem]{Lemma}
\newtheorem{proposition}[theorem]{Proposition}
\newtheorem{remark}{Remark}[section]
\let\temp\phi
\let\phi\varphi
\let\varphi\temp
\begin{document}

\title[Moments for Boltzmann's Equation via Wigner Transform]
{Moments and Regularity for a Boltzmann
Equation via Wigner Transform}

\author{Thomas Chen, Ryan Denlinger, and Nata{\v s}a Pavlovi{\' c}}

\begin{abstract}
In this paper, we continue our study of the Boltzmann equation 
by use of tools originating from the analysis of dispersive equations in quantum dynamics. 
Specifically, we focus on properties of solutions to the Boltzmann equation 
with collision kernel equal to a constant in the spatial
domain $\mathbb{R}^d$, $d\geq 2$, which we use as a model in this paper. 
Local well-posedness
for this equation has been proven using the Wigner transform when
$\left< v \right>^\beta f_0 \in L^2_v H^\alpha_x$ for $\min (\alpha,\beta) > \frac{d-1}{2}$.
We prove that if $\alpha,\beta$ are large enough, then it is possible to propagate
moments in $x$ and derivatives in $v$ (for instance,
$\left< x \right>^k \left< \nabla_v \right>^\ell f \in L^\infty_T 
L^2_{x,v}$ if $f_0$ is nice enough).
The mechanism is an exchange of regularity in return
for moments of the (inverse) Wigner transform of $f$. 
We also prove a persistence of regularity result for the
scale of Sobolev spaces $H^{\alpha,\beta}$; and, 
continuity of the solution map in $H^{\alpha,\beta}$.
Altogether, these results allow us to conclude non-negativity of solutions, conservation of
energy, and the $H$-theorem for sufficiently regular solutions constructed via the
Wigner transform. Non-negativity in particular is proven to hold in $H^{\alpha,\beta}$ for any
$\alpha,\beta > \frac{d-1}{2}$, without any additional regularity or decay assumptions.
\end{abstract}

\maketitle

\section{Introduction}
\label{intro}

We are interested in the local Cauchy theory for the full Boltzmann equation:
\begin{equation}
\label{eq:boltz}
\left( \partial_t + v \cdot \nabla_x \right) f (t,x,v) =
Q (f,f) (t,x,v).
\end{equation}
Here $t \geq 0$, $x,v \in \mathbb{R}^d$ with $d\geq 2$, and the collision operator
$Q(f,f)$ is defined as follows:
\begin{equation}
Q (f,f) = \int_{\mathbb{R}^d} \int_{\mathbb{S}^{d-1}}
d\omega dv_* \mathbf{b} \left( \left| v-v_* \right|,
\omega \cdot \frac{v-v_*}{\left| v-v_* \right|} \right)
\left( f^\prime f_*^\prime - f f_* \right).
\end{equation}
We have defined $f^\prime = f(t,x,v^\prime)$, $f_*^\prime = f(t,x,v_*^\prime)$,
$f_* = f(t,x,v_*)$; the velocities $(v^\prime,v_*^\prime)$ are defined in terms
of $(v,v_*)$ and $\omega \in \mathbb{S}^{d-1}$ by the following relation:
\begin{equation}
\begin{aligned}
v^\prime & = v + (\omega \cdot (v_* - v)) \omega \\
v_*^\prime & = v_* - (\omega \cdot (v_* - v)) \omega.
\end{aligned}
\end{equation}
We will assume throughout this paper that the collision kernel
$\mathbf{b}$ is a bounded function or, for some results, identically constant.
These restrictive assumptions are for technical simplicity; they can certainly
be relaxed for most (and probably all) of our results. We refer to
\cite{CIP1994,Vi2002} for background on Boltzmann's equation.

Although in this paper we focus on properties of local in time solutions for Boltzmann's equation, 
we recall that 
there are two known theories of global solutions for Boltzmann's equation (apart from
small data solutions, which are closer to the local theory). One
 is the theory of renormalized solutions, which applies for arbitrary large data
having finite mass, second moments, and entropy. \cite{DPL1989} Very little is
known about renormalized solutions; in particular, it is not known whether they
are unique. The other theory of global solutions concerns solutions near a
Maxwellian equilibrium of some fixed temperature,
\cite{Uk1974,GS2011,AMUXY2011,Du2008,Gu2003,UY2006}.
The construction of such solutions is intimately tied to the properties of the linearized
collision operator. We are ultimately motivated by certain applications for which it
seems better to view Boltzmann's equation as a perturbation of free transport, rather
than a perturbation of the linearized equation. For example, in the derivation of
Boltzmann's equation from Hamiltonian particle systems \cite{L1975,Ki1975,GSRT2014},
it is very hard to make use of the structure arising from the linearized collision
operator (but see \cite{BGSR2015,BGSR2015III}); the transport structure is still available
in this context 
and that motivates us to employ it. For now, we do that in the setting of local in time solutions.

There are a number of theories of \emph{local} solutions for Boltzmann's equation
currently available; a local well-posedness (LWP)
 theory is a theory of existence and uniqueness which allows data of arbitrary size in some
norm, but which may break down after a short time depending (solely) on the norm of the data.
We refer the reader to \cite{KS1984,Ar2011,AMUXY-Loc} for several of the LWP theories which
are currently known for Boltzmann's equation. A new LWP theory for 
Boltzmann's equation has recently been developed in \cite{CDP2017} using the Wigner transform and tools 
including the bilinear Strachartz estimate
that are inspired by techniques for
the treatment of nonlinear Schr{\" o}dinger equations and the Gross-Pitaevskii 
hierarchy (see e.g. \cite{PC2010}). 
In the present paper,
we show that the solutions of Boltzmann's equation constructed\footnote{We emphasize that, contrary to \cite{Ar2011}, the theory of \cite{CDP2017} has
not been optimized to take advantage of the regularizing properties of the Boltzmann ``gain''
collision term. We expect such optimizations to be available through the use of more general
$X^{s,b}$ norms than considered in this paper; such refinements are the subject of ongoing research.}
in \cite{CDP2017} propagate higher regularity and moments if they are available
at the initial time. 
We give a complete description of this phenomenon
when the collision kernel is identically constant; the reasons for this
limitation seem to be purely technical.

The problem of regularity for Boltzmann's equation has been studied by various authors.
We refer especially to \cite{BD2000,Po1988}; both of these works discuss the 
persistence of regularity of small solutions near vacuum, as in \cite{KS1984}. 
It is also proven in \cite{BD2000} that, for some Boltzmann equations with
Grad cut-off, in the case of small solutions, the solution at positive times propagates
the singularities of the initial data. This shows that, in the Grad cut-off case,
the Boltzmann flow does not smooth out irregular initial data. For this reason, in the
present work we cannot hope to prove that the solution is \emph{smoother} than the
data; we can only hope to show  that the solution is \emph{as regular} as the
data 
and that is what we achieve. More precisely, 
we obtain a fairly complete description
of the persistence of regularity for local-in-time solutions
using the functional framework of \cite{CDP2017}, at least when the collision kernel
is constant.

The study of moments for the Boltzmann equation has a long history, particularly in the
space homogeneous case. We refer to \cite{Vi2002} for a review of the classical
results, as well as to e.g. \cite{bo97, gapavi09, lumo12, alcagamo13, algapata15}
and references therein. 
The rough picture is that, for hard potentials (including hard spheres), the
space homogeneous Boltzmann equation generates higher-order moments \emph{instantaneously}
as soon as the initial energy is finite. For Maxwell molecules or soft potentials, however,
only those moments which are initially finite are finite at positive times. In particular,
in the case of \emph{bounded} collision kernels (which is the only case studied in this paper),
one does not expect any generation of moments effect. Instead, one should seek to prove
\emph{propagation} of moments, which is precisely the type of result we can prove.
Our techniques could be applied in
the case of hard potentials with Grad cut-off, but at present we cannot expect to capture
the generation of moments effect in our estimates. 
We also remark that we  address $L^2$ moments, 
whereas the space homogeneous theory is primarily concerned
with $L^1$ moments.\footnote{There is nothing particularly special about $L^1$ moments;
for example, a theorem of Lanford on the derivation of Boltzmann's equation
\cite{L1975} relies upon exponential $L^\infty$ moments. 
}

As is typically the case when one proves a propagation result, the proofs 
in this paper are based on the following idea:
we write an equation for a desired moment or derivative, and then solve that equation by a fixed point argument
as in \cite{CDP2017}.\footnote{Technically one must show that the solution of the new fixed
point problem is related to the solution the original fixed point problem in the expected
way; this is formally trivial but inconvenient to prove rigorously. 
We shall not address this issue in full detail.}
 In all cases we must pay a cost in terms of regularity/moments
in order to propagate regularity/moments in some other variable. 
 For example, we can propagate moments in $x$ by paying with
moments in $v$ until we run out of currency to exchange; this results in a natural limit
to the number of moments we can propagate. Similarly, for an \emph{identically constant}
collision kernel, we can propagate derivatives in $v$ by paying with derivatives
 in $x$, until we run out of derivatives to trade. Now it is \emph{a priori} possible that
solutions which are initially smooth in $x$ with rapid decay in $v$ lose this property
after some short time, only to persist for a longer time in a less regular space. We
can rule this out, to some extent, by our methods as well. In particular, we prove
persistence of regularity results for the scale of Sobolev spaces which arise naturally
from the analysis of \cite{CDP2017}. Effectively, as soon as the solution has enough regularity
and decay to apply the methods of \cite{CDP2017}, we can prove that \emph{more regular}
data leads to more regular solutions for as long as the solution exists in the
less regular space. The higher Sobolev norms may grow much more rapidly in time than the lower
Sobolev norms, however.

As an application of our results, we are able to prove that for constant collision kernels,
the solutions constructed in \cite{CDP2017} propagate \emph{non-negativity} assuming only
that the data itself is non-negative. This is true under minimal assumptions for which the
theory of \cite{CDP2017} applies; in particular, we \emph{do not} require higher moment
or regularity estimates to prove non-negativity. The reason is that our persistence of 
regularity results allow us to approximate low-regularity solutions by higher regularity
solutions for a short time interval; even though the higher Sobolev norms may be very
large, they will at least be finite on a time interval bounded from below uniformly with
respect to the mollification parameter. Once we have enough regularity and decay, we can
apply a theorem of \cite{LZ2001} directly (that theorem is based on a Gronwall type
argument to control the negative part of the solution). The non-negativity is preserved
upon passage to the limit.

\textbf{Organization of the paper.} 
In Section \ref{sec:Ntn}, we outline the basic notation and the main results we 
prove in this paper. In Section \ref{sec:Prop}, we quote a slightly refined version of
a key proposition from \cite{CDP2017} which we require to prove our main results;
we also prove a couple of useful lemmas. Section \ref{sec:Mts} is dedicated to proving
propagation of moments in $x$ and derivatives in $v$. In Section \ref{sec:pers}, we
prove persistence of regularity in the scale of Sobolev spaces $H^{\alpha,\beta}$; this
corresponds to derivatives in $x$ and moments in $v$. In Section \ref{sec:treg}, we
address regularity with respect to the time variable. Section \ref{sec:cont} contains a proof
of continuity of the solution map. A new bilinear estimate with loss is proven in
Appendix \ref{sec:AppA}; this estimate is used in Section \ref{sec:treg}.

\section*{Acknowledgements}
\label{sec:acknowledgements}

R.D. gratefully acknowledges support from a postdoctoral fellowship
at the University of Texas at Austin. N.P. is funded in part by NSF DMS-1516228.
T.C. gratefully acknowledges support by the NSF through grants DMS-1151414 (CAREER) and DMS-1716198.

\section{Notation and Main Results}
\label{sec:Ntn}

In this section we present the notation, followed by a brief review of the relevant previous works and the statement of 
main results of this paper. 

\subsection{Notation}

We will employ the Wigner transform as in \cite{CDP2017}. For any function
$f(x,v) \in L^2_{x,v}$, we define $\gamma (x,x^\prime) \in L^2_{x,x^\prime}$ as follows:
\begin{equation}
\gamma (x,x^\prime) = \int_{\mathbb{R}^d} f\left( \frac{x+x^\prime}{2},v\right)
e^{iv\cdot (x-x^\prime)} dv.
\end{equation}
The inverse of this formula is the Wigner transform, given by:
\begin{equation}
f(x,v) = \frac{1}{(2\pi)^d} \int_{\mathbb{R}^d}
\gamma \left( x+\frac{y}{2},x-\frac{y}{2} \right) e^{-i v\cdot y} dy.
\end{equation}
We may write $f = \mathcal{W} [\gamma]$ and $\gamma = \mathcal{W}^{-1} [f]$. The map
$\mathcal{W} : L^2_{x,x^\prime} \rightarrow L^2_{x,v}$ is an isometric linear isomorphism
by Plancharel's theorem. We will assume throughout this paper that
\begin{equation}
\left\Vert \mathbf{b} \right\Vert_\infty
= \sup_{u \in \mathbb{R}^d,\; \omega\in \mathbb{S}^{d-1}}
\left| \mathbf{b} \left( |u|,\omega\cdot \frac{u}{|u|} \right) \right| < \infty.
\end{equation}
For some results, we will need to assume (for technical reasons)
that $\mathbf{b}$ is identically constant.
We define the Fourier transform of the collision kernel, namely:
\begin{equation}
\hat{\mathbf{b}}^\omega (\xi) = \int_{\mathbb{R}^d}
\mathbf{b} \left( |u|,\omega \cdot \frac{u}{|u|} \right) e^{-i u \cdot \xi}
du.
\end{equation}

The functional setting is the same as that of \cite{CDP2017}, and it is defined via the
Fourier transform of $\gamma = \mathcal{W}^{-1} [f]$:
\begin{equation}
\hat{\gamma} (\xi,\xi^\prime) = \int_{\mathbb{R}^d \times \mathbb{R}^d}
e^{-i x \cdot \xi} e^{-i x^\prime \cdot \xi^\prime}
\gamma (x,x^\prime) dx dx^\prime.
\end{equation}
For any $\alpha,\beta \geq 0$ we define
\begin{equation}
\left\Vert \gamma (x,x^\prime) \right\Vert_{H^{\alpha,\beta}} =
\left\Vert \left< \xi+\xi^\prime \right>^\alpha
\left< \xi-\xi^\prime \right>^\beta
\hat{\gamma} (\xi,\xi^\prime) \right\Vert_{L^2_{\xi,\xi^\prime}}.
\end{equation}
This norm is equivalent to the following norm for $f(x,v)$:
\begin{equation}
\left\Vert \left< 2v \right>^{\beta} (1-\Delta_x)^{\frac{\alpha}{2}}
f (x,v) \right\Vert_{L^2_{x,v}}.
\end{equation}

\subsection{Previous results}

If $f(t,x,v)$ is a smooth and rapidly decaying solution of Boltzmann's equation (\ref{eq:boltz})
with $\left\Vert \mathbf{b} \right\Vert_\infty < \infty$,
it is possible to show that $\gamma (t) = \mathcal{W}^{-1} [f(t)]$ solves the following
equation: (see Appendix A of \cite{CDP2017} for a proof)
\begin{equation}
\left( i \partial_t + \frac{1}{2} (\Delta_x - \Delta_{x^\prime}) \right) \gamma(t) 
= B (\gamma(t),\gamma(t))
\end{equation}
\begin{equation}
B(\gamma_1,\gamma_2) = B^+ (\gamma_1,\gamma_2) - B^- (\gamma_1,\gamma_2)
\end{equation}
\begin{equation}
\begin{aligned}
& B^- (\gamma_1,\gamma_2) (x,x^\prime) = \frac{i}{2^{2d} \pi^d} 
\int_{\mathbb{S}^{d-1}} d\omega \int_{\mathbb{R}^d} dz \hat{\mathbf{b}}^\omega \left( \frac{z}{2}\right)
\times \\
& \qquad \qquad 
\times \gamma_1 \left( x-\frac{z}{4},x^\prime + \frac{z}{4} \right)
\gamma_2 \left( \frac{x+x^\prime}{2}+\frac{z}{4},
\frac{x+x^\prime}{2}-\frac{z}{4}\right)
\end{aligned}
\end{equation}
\begin{equation}
\begin{aligned}
& B^+ (\gamma_1,\gamma_2) (x,x^\prime) = \frac{i}{2^{2d} \pi^d}
\int_{\mathbb{S}^{d-1}} d\omega \int_{\mathbb{R}^d} dz \hat{\mathbf{b}}^\omega \left( \frac{z}{2}\right)
\times \\
& \times \gamma_1 \left( x - \frac{1}{2} P_\omega (x-x^\prime) - \frac{R_\omega (z)}{4},
x^\prime + \frac{1}{2} P_\omega (x-x^\prime) + \frac{R_\omega (z)}{4} \right) \times \\
& \times \gamma_2 \left( \frac{x+x^\prime}{2} + \frac{1}{2} P_\omega (x-x^\prime)
+ \frac{R_\omega (z)}{4}, \frac{x+x^\prime}{2} - \frac{1}{2}
P_\omega (x-x^\prime) - \frac{R_\omega (z)}{4} \right)
\end{aligned}
\end{equation}
Here we have
\begin{equation}
P_\omega (x) = (\omega\cdot x ) \omega
\end{equation}
\begin{equation}
R_\omega (x) = (\mathbb{I}-2P_\omega) (x)
\end{equation}
and $\mathbb{I}(x)=x$. 

We will refer to $f$ as a solution of Boltzmann's equation if $\gamma
= \mathcal{W}^{-1} [f]$ solves the Duhamel-type integral formula:
\begin{equation}
\label{eq:gamma-boltz}
\gamma (t) = e^{\frac{1}{2} i t \Delta_{\pm}} \gamma(0) - i \int_0^t
e^{\frac{1}{2} i (t-t_1) \Delta_{\pm}} B(\gamma (t_1),\gamma(t_1)) dt_1
\end{equation}
We may also say in this case that $\gamma$ solves Boltzmann's equation.
Note that if $\mathbf{b}\equiv \textnormal{cst.}$ then $\hat{\mathbf{b}}^\omega$ is a 
$\delta$-function concentrated at $z=0$ (cf. Bobylev's formula).

\begin{remark}
We anticipate that some parts of the analysis of Boltzmann's
equation via  bilinear Strichartz estimates can be presented in terms of the
kinetic transport operator
\begin{equation}
\label{eq:dtvdx}
\partial_t + v \cdot \nabla_x
\end{equation}
Indeed, starting from (\ref{eq:dtvdx}) and
 taking the Fourier transform in $t$ and $x$, with dual variables
$\tau$ and $\eta$ respectively, yields the weight
\begin{equation}
\tau + v \cdot \eta
\end{equation}
To see the equivalence, replace $v \mapsto (\xi - \xi^\prime)/2$ and
$\eta \mapsto \xi + \xi^\prime$, and thereby recover the difference of squares
\begin{equation}
v\cdot \eta \mapsto \frac{1}{2} \left( |\xi|^2 - |\xi^\prime|^2 \right)
\end{equation}
which appears
in the spacetime Fourier analysis of the
 density matrix formulation of Schr{\" o}dinger's equation. This observation would be
especially helpful for problems with periodic boundary conditions, where the interpretation
of the Wigner transform is less clear.
\end{remark}

Compared to the kinetic formulation of Boltzmann's equation,
the density matrix formulation has some unique advantages; most importantly, it facilitates a
direct comparison to the substantial literature on Schr{\" o}dinger's equation in density matrix
formulation.\footnote{A detailed analysis of the spectral properties of $\gamma$ (viewed as
a linear operator on $L^2 (\mathbb{R}^d)$), or the
connections to Bobylev's formula, may also provide interesting
 avenues for research; however, we do not
explore these directions in this work.} This is particularly evident in the case of a constant
collision kernel $\mathbf{b} \equiv \textnormal{cst.}$,
 where the Boltzmann loss operator becomes (up to a real constant)
\begin{equation}
B^- (\gamma,\gamma) = i \rho_\gamma \left( \frac{x+x^\prime}{2}\right) \gamma (x,x^\prime)
\end{equation}
where $\rho_\gamma (x) = \gamma (x,x)$ is the diagonal of a density matrix.
Though this is unfortunately not a commutator, it does look very similar to the
bilinear operator
\begin{equation}
B (\gamma,\gamma)
= \left( \rho_\gamma (x) - \rho_\gamma (x^\prime) \right) \gamma (x,x^\prime)
 = \left[ \rho_\gamma , \gamma \right] (x,x^\prime)
\end{equation}
which defines the density matrix formulation of cubic NLS (see \cite{KM2008}).
Moreover, just as
$(d-1)/2$ is $L^2$-based Sobolev regularity threshold for our proof (in both
$\alpha$ and $\beta$ for a constant collision kernel), the regularity threshold
for the proof of \cite{KM2008} in the cubic NLS case is exactly $(d-1)/2$. This is not
surprising because our argument is a translation 
of the arguments of
\cite{KM2008} to the Boltzmann equation.
Note that, in the Boltzmann case, a somewhat better regularity class has been obtained 
for some collision kernels with Grad cut-off
using the Strichartz estimates of Castella and Perthame 
\cite{Ar2011,CP1996}, but it was necessary to use the convoluting effects of the Boltzmann
gain operator. We
 have not made essential use of the special properties of the gain term in
the present work.

The following LWP result was proven in \cite{CDP2017}:
\begin{theorem}
\label{thm:lwp}
Let $\alpha,\beta \in \left( \frac{d-1}{2},\infty\right)$ and consider the Boltzmann
equation with $\left\Vert \mathbf{b} \right\Vert_\infty < \infty$. For any
$\gamma_0 \in H^{\alpha,\beta}$ there exists a unique solution $\gamma(t)$ of Boltzmann's
equation on a small time interval $[0,T]$ such that
\begin{equation}
\left\Vert \gamma \right\Vert_{L^\infty_T H^{\alpha,\beta}} < \infty
\end{equation}
and 
\begin{equation}
\left\Vert B(\gamma,\gamma) \right\Vert_{L^1_T H^{\alpha,\beta}} < \infty
\end{equation}
both hold, and $\gamma(0) = \gamma_0$. Moreover, if $\left\Vert \gamma_0 \right\Vert_{H^{\alpha,\beta}}
\leq M$ then for all small enough $T$ depending only on $\alpha,\beta$ and $M$, there holds:
\begin{equation}
T^{\frac{1}{2}} \left\Vert \gamma \right\Vert_{L^\infty_T H^{\alpha,\beta}}
+ \left\Vert B(\gamma,\gamma) \right\Vert_{L^1_T H^{\alpha,\beta}} \leq
C(M,\alpha,\beta) T^{\frac{1}{2}} \left\Vert \gamma_0 \right\Vert_{H^{\alpha,\beta}}
\end{equation}
\end{theorem}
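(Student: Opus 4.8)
The plan is to solve the Duhamel equation (\ref{eq:gamma-boltz}) by a contraction mapping argument. Introduce the solution map
\[
\Phi(\gamma)(t) = e^{\frac{1}{2}it\Delta_{\pm}}\gamma_0 - i\int_0^t e^{\frac{1}{2}i(t-t_1)\Delta_{\pm}} B(\gamma(t_1),\gamma(t_1))\,dt_1
\]
and the space $\mathcal{X}_T = \{\gamma \in C([0,T];H^{\alpha,\beta}) : B(\gamma,\gamma)\in L^1_T H^{\alpha,\beta}\}$. Since $e^{\frac{1}{2}it\Delta_{\pm}}$ acts as a Fourier multiplier of modulus one that commutes with the weights $\left< \xi+\xi' \right>^\alpha \left< \xi-\xi' \right>^\beta$, it is an isometry of $H^{\alpha,\beta}$, so that by Minkowski's inequality
\[
\left\Vert \Phi(\gamma) \right\Vert_{L^\infty_T H^{\alpha,\beta}} \leq \left\Vert \gamma_0 \right\Vert_{H^{\alpha,\beta}} + \left\Vert B(\gamma,\gamma) \right\Vert_{L^1_T H^{\alpha,\beta}}.
\]
Hence the entire problem collapses to a single bilinear spacetime estimate: controlling $\left\Vert B(\cdot,\cdot) \right\Vert_{L^1_T H^{\alpha,\beta}}$ with a gain of a positive power of $T$.

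The engine is a bilinear Strichartz estimate for the free evolution: for $\alpha,\beta > \frac{d-1}{2}$,
\[
\left\Vert B\left( e^{\frac{1}{2}it\Delta_{\pm}}g_1,\; e^{\frac{1}{2}it\Delta_{\pm}}g_2 \right) \right\Vert_{L^2_T H^{\alpha,\beta}} \lesssim \left\Vert g_1 \right\Vert_{H^{\alpha,\beta}} \left\Vert g_2 \right\Vert_{H^{\alpha,\beta}},
\]
after which H{\" o}lder in time on $[0,T]$ trades $L^2_T$ for $L^1_T$ at the cost of $T^{1/2}$. To prove this I would pass to the Fourier transform in $(x,x')$: each of $B^{\pm}$ becomes an $\omega$-average over $\mathbb{S}^{d-1}$ of a bilinear convolution in $(\xi,\xi')$ against $\hat{\mathbf{b}}^\omega$ (a $\delta$ at $z=0$ in the constant-kernel case), the diagonal $\rho_\gamma$ contributing a further frequency integration, while the two free flows contribute the difference-of-squares phase $\frac{1}{2}(|\xi|^2-|\xi'|^2)$ noted in the Remark above. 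Taking $L^2$ in $t$ turns the time integration into a scalar $\delta$-constraint on the frequencies; resolving that constraint over one internal frequency variable produces a Jacobian $\sim|\zeta|^{-1}$ (with $\zeta$ the frequency dual to the diagonal variable), locally integrable for $d\geq 2$, and a Cauchy--Schwarz in the remaining $(d-1)$ internal frequencies, together with the splitting $\left<\xi+\xi'\right>^\alpha \lesssim \left<\xi+\xi'-\zeta\right>^\alpha\left<\zeta\right>^\alpha$, redistributes the weights onto $g_1,g_2$; the surviving frequency integrals converge precisely when $2\alpha>d-1$ and $2\beta>d-1$, which is the source of the threshold $\frac{d-1}{2}$. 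I expect this to be by far the main obstacle: it is in essence a transcription of the Klainerman--Machedon estimate \cite{KM2008} --- whose $(d-1)/2$ threshold is sharp --- to the bilinear forms $B^\pm$, the only genuinely new bookkeeping being the reflected arguments $\frac{1}{2}P_\omega(x-x')\pm\frac{1}{4}R_\omega(z)$ inside $B^+$, which are handled by an $\omega$-dependent linear change of variables before the $\omega$-integration.

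Granting this estimate, I would upgrade it to a bound for an arbitrary pair in $\mathcal{X}_T$. Writing each argument in Duhamel form $\gamma_j = e^{\frac{1}{2}it\Delta_{\pm}}g_j + \int_0^t e^{\frac{1}{2}i(t-s)\Delta_{\pm}}F_j(s)\,ds$ with $\left\Vert F_j \right\Vert_{L^1_T H^{\alpha,\beta}}$ under control, I would expand $B(\gamma_1,\gamma_2)$ bilinearly into free--free, free--Duhamel, and Duhamel--Duhamel pieces: the first is exactly the estimate above, while in the others the inner integral is viewed as a superposition in $s$ of free evolutions with data $F_j(s)$, to which that estimate applies for each fixed $s$ before one integrates in $s$, a Minkowski / Christ--Kiselev manoeuvre absorbing the retarded cutoff $s<t$. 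This yields
\[
\left\Vert B(\gamma_1,\gamma_2) \right\Vert_{L^1_T H^{\alpha,\beta}} \lesssim T^{\frac{1}{2}}\left( \left\Vert g_1 \right\Vert_{H^{\alpha,\beta}} + \left\Vert F_1 \right\Vert_{L^1_T H^{\alpha,\beta}} \right)\left( \left\Vert g_2 \right\Vert_{H^{\alpha,\beta}} + \left\Vert F_2 \right\Vert_{L^1_T H^{\alpha,\beta}} \right),
\]
and, by polarization, the same bound for $B(\gamma_1,\gamma_1)-B(\gamma_2,\gamma_2)$.

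Finally I would close the fixed point on the ball $\{\left\Vert \gamma \right\Vert_{L^\infty_T H^{\alpha,\beta}} \leq 2M,\ \left\Vert B(\gamma,\gamma) \right\Vert_{L^1_T H^{\alpha,\beta}} \leq C^2 M^2 T^{1/2}\}$, with $M = \left\Vert \gamma_0 \right\Vert_{H^{\alpha,\beta}}$: the two displayed bounds show that $\Phi$ maps this ball into itself and is a contraction once $C^2 M T^{1/2}$ is small, with $T$ depending only on $M,\alpha,\beta$, yielding existence together with the quantitative estimate in the statement and uniqueness within the ball. Uniqueness in the full class follows from the difference estimate by a standard continuity-in-$T$ argument, and membership in $C([0,T];H^{\alpha,\beta})$ as well as finiteness of $\left\Vert B(\gamma,\gamma) \right\Vert_{L^1_T H^{\alpha,\beta}}$ are read off from the Duhamel formula by dominated convergence.
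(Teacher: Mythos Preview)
Your proposal is correct and follows the same overall strategy as \cite{CDP2017} (the paper itself only quotes this theorem and refers to that reference; the bilinear Strichartz estimate you identify as the engine is exactly Proposition~\ref{prop:KeyProp}, proved in Appendix~\ref{sec:AppB}). Your outline of that estimate---spacetime Fourier transform, pointwise Cauchy--Schwarz after inserting and removing the weights $\left<\xi+\xi'\right>^\alpha\left<\xi-\xi'\right>^\beta$, resolving the $\delta$-constraint over one frequency to produce a $|w|^{-1}$ Jacobian, and then checking convergence of the residual integrals precisely when $\alpha,\beta>\tfrac{d-1}{2}$---matches the argument there.

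The one organizational difference worth noting is how the Duhamel contributions are handled. You propose to expand $B(\gamma_1,\gamma_2)$ into free--free, free--Duhamel, and Duhamel--Duhamel pieces and invoke a Christ--Kiselev/Minkowski manoeuvre to deal with the retarded cutoff $s<t$. The approach of \cite{CDP2017} (visible throughout Sections~\ref{sec:Mts}--\ref{sec:cont} of this paper) instead promotes $\zeta=B(\gamma,\gamma)$ to a second unknown and derives a coupled integral system for $(\gamma,\zeta)$ in which every bilinear term already has the form $B\big(e^{\frac{1}{2}i(t-t_1)\Delta_\pm}\cdot,\,e^{\frac{1}{2}i(t-t_1)\Delta_\pm}\cdot\big)$; one then simply bounds $\int_0^t\le\int_0^T$, applies Fubini, and uses $\|\cdot\|_{L^1_T}\le T^{1/2}\|\cdot\|_{L^2_T}$ together with Proposition~\ref{prop:KeyProp}. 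This sidesteps Christ--Kiselev entirely and is the device (adapted from \cite{PC2010}) that the paper relies on repeatedly. Both routes close the contraction with the same $T^{1/2}$ gain, so your argument is sound; but if you want to align with the paper's idiom, recasting the fixed point as one for the pair $(\gamma,\zeta)\in L^\infty_T H^{\alpha,\beta}\times L^1_T H^{\alpha,\beta}$ is cleaner and avoids the small technical overhead of justifying the retarded estimate.
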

\begin{remark}
Note carefully that, as a consequence of Theorem \ref{thm:lwp}, 
$$\left\Vert B(\gamma,\gamma)
\right\Vert_{L^1_T H^{\alpha,\beta}}
$$ 
scales at worst like a power of $T$, namely 
$T^{\frac{1}{2}}$, when $T$ is small. We will make use of this fact repeatedly in this
work.
\end{remark}

The functional spaces $H^{\alpha,\beta}$ automatically guarantee some regularity in space and
decay in velocity variables. Our first result, to be proven in Section \ref{sec:Mts}, states that
it is possible to trade moments in $v$ for moments in $x$; and, if $\mathbf{b} \equiv 
\textnormal{cst.}$, it is also possible to trade
derivatives in $x$ for derivatives in $v$. For these results to hold, we must always assume that
enough decay or regularity is available in the initial data. We remind the reader that, for
bounded collision kernels, the Boltzmann equation is not expected to generate derivatives or
moments in any variable at positive times; hence, any regularity or decay estimate
 must have been present
at the initial time for it to be available at positive times.

\subsection{Main results of this paper}

Now we are ready to state main results of this paper which are formulated in Theorem \ref{thm:mts}, Theorem \ref{thm:pers}
and Theorem \ref{thm:nonneg}.

\begin{theorem}
\label{thm:mts}
Let $\gamma (t)$ be a solution of Boltzmann's equation with bounded collision kernel,
$\left\Vert \mathbf{b} \right\Vert_\infty < \infty$, satisfying the following bounds on some
time interval $[0,T]$:
\begin{equation}
\left\Vert \gamma(t) \right\Vert_{L^\infty_T H^{\alpha,\beta}} < \infty
\end{equation}
\begin{equation}
\left\Vert B(\gamma(t),\gamma(t)) \right\Vert_{L^1_T H^{\alpha,\beta}} < \infty
\end{equation}
with $\alpha,\beta > \frac{d-1}{2}$. Then we have the following: \\
(i) Suppose that, for some integer $K>0$ with $K< \beta - \frac{d-1}{2}$, for any integer $k$ with
$1\leq k \leq K$ there holds
\begin{equation}
\left< x+x^\prime \right>^k \gamma (0) \in H^{\alpha,\beta-k}
\end{equation}
Then for all $1 \leq k \leq K$ we have
\begin{equation}
\left< x+x^\prime \right>^k \gamma(t) \in L^\infty_T H^{\alpha,\beta-k}
\end{equation}
\\
(ii) Assume that $\mathbf{b} \equiv \textnormal{cst.}$; then,
suppose that, for some integer $K>0$ with $2K < \alpha - \frac{d-1}{2}$, for all $1\leq k \leq K$
there holds
\begin{equation}
\left< x-x^\prime \right>^{2k} \gamma (0) \in H^{\alpha-2k,\beta}
\end{equation}
Then for all $1\leq k \leq K$ we have
\begin{equation}
\left< x-x^\prime \right>^{2k} \gamma(t) \in L^\infty_T H^{\alpha-2k,\beta}
\end{equation}
\end{theorem}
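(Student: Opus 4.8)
The plan is to derive, for each fixed $k$, a closed Duhamel-type equation satisfied by the weighted quantity $\gamma_k(t) := \left< x+x^\prime \right>^k \gamma(t)$ (respectively $\left< x-x^\prime \right>^{2k}\gamma(t)$ in part (ii)), and then solve that equation by the same contraction-mapping machinery underlying Theorem \ref{thm:lwp}. The key algebraic observation is that the free Schr\"odinger-type propagator $e^{\frac12 i t \Delta_\pm}$ interacts with the weights $\left< x+x^\prime \right>$ and $\left< x-x^\prime \right>$ in a controlled way: conjugating $\partial_t + \tfrac{i}{2}(\Delta_x - \Delta_{x^\prime})$ by a polynomial weight in $x+x^\prime$ or $x-x^\prime$ produces lower-order terms (derivatives hitting the weight), which is exactly the ``currency exchange'' advertised in the introduction --- applying $\left< x+x^\prime\right>^k$ costs one power of $\left< \xi - \xi^\prime\right>$ per factor (hence $H^{\alpha,\beta} \to H^{\alpha,\beta-k}$), and applying $\left< x-x^\prime\right>^{2k}$ costs $2k$ powers of $\left< \xi+\xi^\prime\right>$ (hence $H^{\alpha,\beta}\to H^{\alpha-2k,\beta}$). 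I would proceed by induction on $k$: assuming the bound for $\gamma, \gamma_1, \dots, \gamma_{k-1}$, I derive an equation for $\gamma_k$ whose nonlinearity is $B$ applied to pairs of lower-order weighted objects plus a genuinely ``top order'' term $B$ applied to $\gamma_k$ and $\gamma$ (or $\gamma$ and $\gamma_k$), since in each of $B^\pm$ only one of the two slots carries the full argument $x+x^\prime$ or the shifted variables.

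The main steps, in order, are as follows. First I would establish the commutator identities: compute how $\left< x+x^\prime\right>^k$ and $\left< x-x^\prime\right>^{2k}$ commute through $B^+$ and $B^-$, using the explicit formulas for $B^\pm$ and the fact that $\hat{\mathbf b}^\omega$ (or the $\delta$-function, when $\mathbf{b}$ is constant) localizes $z$ near $0$. The crucial point is that the arguments appearing in $B^\pm(\gamma_1,\gamma_2)$ are affine functions of $x,x^\prime,z$ (and, in $B^+$, of $P_\omega(x-x^\prime)$), so that $\left< x + x^\prime \right>^k$ evaluated at those arguments can be re-expressed, via the binomial theorem, as a finite sum of terms each of which distributes the weight onto one of the two factors $\gamma_1,\gamma_2$ times a bounded-in-$z$ (integrable against $\hat{\mathbf b}^\omega$) polynomial correction. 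Second, having obtained the equation
\begin{equation}
\gamma_k(t) = e^{\frac12 i t \Delta_\pm} \gamma_k(0) - i \int_0^t e^{\frac12 i (t-t_1)\Delta_\pm}\Big( \mathcal{N}_{\mathrm{top}}(\gamma_k,\gamma)(t_1) + \mathcal{N}_{\mathrm{low}}(\gamma_1,\dots,\gamma_{k-1},\gamma)(t_1) \Big) dt_1,
\end{equation}
I would invoke the refined version of the key proposition from \cite{CDP2017} (to be quoted in Section \ref{sec:Prop}) to bound $\mathcal N_{\mathrm{top}}$ in $L^1_T H^{\alpha,\beta-k}$ by $C T^{1/2}\|\gamma_k\|_{L^\infty_T H^{\alpha,\beta-k}}\|\gamma\|_{L^\infty_T H^{\alpha,\beta}}$ --- here the loss of $k$ in the second exponent is precisely absorbed because the weight lands on the $\gamma_k$ slot, and the bilinear estimate requires only $\min(\alpha, \beta-k) > \frac{d-1}{2}$, which is guaranteed by the hypothesis $K < \beta - \frac{d-1}{2}$. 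Third, the lower-order term $\mathcal N_{\mathrm{low}}$ is estimated by the same bilinear estimate together with the inductive hypothesis, giving a finite $L^1_T H^{\alpha,\beta-k}$ bound with no smallness needed. Fourth, a standard fixed-point/continuity argument closes: for $T$ small the top-order term is a contraction, and iterating over a partition of $[0,T]$ (the length of each subinterval depending only on $\|\gamma\|_{L^\infty_T H^{\alpha,\beta}}$, which is finite by hypothesis) propagates the weighted bound across the whole interval. Part (ii) is identical in structure, with $\left< x-x^\prime\right>^{2k}$ in place of $\left< x+x^\prime\right>^k$ and the role of $\alpha,\beta$ swapped; the restriction to constant $\mathbf b$ enters because only then does $B^+$ have the simple shift structure that makes the weight $\left< x - x^\prime\right>^{2k}$ transform cleanly (for general $\mathbf b$ the $R_\omega(z)/4$ shifts in $B^+$ entangle the $x-x^\prime$ direction with $z$ in a way that is not obviously harmless).

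I expect the main obstacle to be the bookkeeping in the commutator computation of Step 1, specifically verifying that every correction term generated when pushing the weight through $B^\pm$ is either (a) a weight of order $\leq k$ on exactly one factor with the other factor unweighted, or (b) accompanied by an explicit polynomial in $z$ (or in $P_\omega(x-x^\prime)$) of degree controlled so that, after integration against $\hat{\mathbf b}^\omega(z/2)$ and use of the bilinear estimate, no net derivative deficit beyond the advertised $k$ (resp. $2k$) is incurred. In part (ii), the terms where $\left< x - x^\prime\right>^{2}$ hits the argument $x^\prime + \tfrac12 P_\omega(x-x^\prime) + \tfrac14 R_\omega(z)$ need particular care, since $(x - x^\prime)$ appears both explicitly and inside $P_\omega$; using $P_\omega + R_\omega = \mathbb I - P_\omega$ appropriately should resolve this but requires attention. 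A secondary technical nuisance, flagged already in the introduction's footnotes, is justifying rigorously that the solution of the $\gamma_k$-fixed-point problem coincides with $\left< x+x^\prime\right>^k$ times the original solution $\gamma$; I would handle this by a mollification/approximation argument (solve for smooth rapidly-decaying data where everything is classical, then pass to the limit using the uniqueness in Theorem \ref{thm:lwp}), and, following the authors' stated convention, not dwell on it in full detail.
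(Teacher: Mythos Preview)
Your overall plan---commutator identities for the weights, a Duhamel equation for the weighted quantity, induction on $k$, closing on short subintervals and iterating---matches the paper's. But Step~2 as stated contains a real gap: Proposition~\ref{prop:KeyProp} does \emph{not} give an estimate of the form
\[
\bigl\Vert B(\gamma_k,\gamma)\bigr\Vert_{L^1_T H^{\alpha,\beta-k}}\lesssim T^{1/2}\,\Vert\gamma_k\Vert_{L^\infty_T H^{\alpha,\beta-k}}\,\Vert\gamma\Vert_{L^\infty_T H^{\alpha,\beta}}.
\]
It only controls $B$ applied to \emph{free evolutions} $e^{\frac12 it\Delta_\pm}\gamma_j$, in $L^2_t$; there is no fixed-time bound for $B$ in the $H^{\alpha,\beta}$ scale without loss (this is precisely the content of Appendix~\ref{sec:AppA}). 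The paper's remedy is to treat $\zeta_{k,+}(t):=B(\gamma_{k,+}(t),\gamma(t))$ as a \emph{second unknown} and derive a Duhamel-type equation for it by writing the evolution of $\gamma_{k,+}\otimes\gamma$ and then applying $B$ (the device from \cite{PC2010}). Every term in the resulting formula for $\zeta_{k,+}$ has the shape $B\bigl(e^{\frac12 i(t-t_1)\Delta_\pm}a,\,e^{\frac12 i(t-t_1)\Delta_\pm}b\bigr)$ with $a,b$ among $\gamma_{k,+},\gamma,\zeta_{k,+},\zeta,\dots$, so Proposition~\ref{prop:KeyProp} applies after Cauchy--Schwarz in $t$. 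The contraction is then run in the \emph{coupled} norm $T^{1/2}\Vert\gamma_{k,+}\Vert_{L^\infty_T H^{\alpha,\beta-k}}+\Vert\zeta_{k,+}\Vert_{L^1_T H^{\alpha,\beta-k}}$, with smallness coming from $T^{1/2}$ and from $\Vert\zeta\Vert_{L^1_T H^{\alpha,\beta}}=O(T^{1/2})$.

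There is a second gap in part~(ii) that you do not anticipate. The commutator of $\langle x-x'\rangle^{2k}$ with $\Delta_x-\Delta_{x'}$ produces the term $2k\,(\nabla_x+\nabla_{x'})\cdot\bigl((x-x')\langle x-x'\rangle^{2k-2}\gamma\bigr)$, which carries an \emph{odd}-order weight and is not one of your $\gamma_j=\langle x-x'\rangle^{2j}\gamma$ with $j\le k-1$. The paper therefore enlarges the system, tracking in addition $\breve\gamma_{2k-1,-}:=(x-x')\langle x-x'\rangle^{2k-2}\gamma$ together with its own collision quantity $\breve\zeta_{2k-1,-}$, and closes on all four unknowns $\{\gamma_{2k,-},\zeta_{2k,-},\breve\gamma_{2k-1,-},\breve\zeta_{2k-1,-}\}$ simultaneously. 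Your sketch, which runs the induction only over the even-order weights, would not close.
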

\begin{remark}
Our proof provides quantitative estimates for $\left< x + x^\prime \right>^k \gamma (t)$ and
$\left< x-x^\prime \right>^{2k} \gamma(t)$ in the relevant function spaces. However, these
bounds may grow very rapidly with time and we make no effort to prove optimal bounds on
the growth rate.
\end{remark}

In order to apply Theorem \ref{thm:mts}, we generally require solutions $\gamma \in
H^{\alpha,\beta}$ for large values of $\alpha,\beta$. This naturally leads us to inquire
whether a solution may blow up in $H^{\alpha,\beta}$ only to persist longer in some less
regular space. This question is particularly relevant if we want to approximate some irregular
initial data by some other, very regular, data for the purpose of formal computation. 
Our next result, proven in Section \ref{sec:pers},
 rules out such pathological behavior within the scale of Sobolev
spaces $H^{\alpha,\beta}$ with $\alpha,\beta > \frac{d-1}{2}$.

\begin{theorem}
\label{thm:pers}
Let $\gamma(t)$ be a solution of Boltzmann's equation with $\left\Vert \mathbf{b} \right\Vert_\infty
< \infty$, and suppose $\gamma \in L^\infty_T H^{\alpha,\beta}$ and
$B(\gamma,\gamma) \in L^1_T H^{\alpha,\beta}$ for some $\alpha,\beta > \frac{d-1}{2}$.
Then we have the following: \\
(i) If $\gamma(0) \in H^{\alpha+r,\beta}$ for some $r\in \mathbb{N}$, then
$\gamma \in L^\infty_T H^{\alpha+r,\beta}$ and $B(\gamma,\gamma) \in L^1_T H^{\alpha+r,\beta}$. \\
(ii) If $\gamma(0) \in H^{\alpha,\beta+r}$ for some (real) $r>0$, then
$\gamma \in L^\infty_T H^{\alpha,\beta+r}$ and $B(\gamma,\gamma) \in L^1_T H^{\alpha,\beta+r}$.
\end{theorem}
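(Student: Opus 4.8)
The plan is to run the same fixed-point/iteration scheme as in the proof of Theorem \ref{thm:lwp}, but in the higher-regularity space, using the already-known $H^{\alpha,\beta}$ bound as an a priori input to close the estimates on a \emph{fixed} time interval (the one on which the solution is already known to live). The key structural fact I would exploit is that the bilinear operator $B$ gains nothing but also loses nothing in a suitable sense: the Strichartz-type estimate behind Theorem \ref{thm:lwp} should come with a companion estimate in which one factor is measured in the higher space $H^{\alpha+r,\beta}$ (resp. $H^{\alpha,\beta+r}$) and the other merely in $H^{\alpha,\beta}$, with the output in the higher space and with a gain of a power $T^{1/2}$ in the small-time regime. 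Concretely, I would first establish, as a lemma (or cite the refined proposition promised for Section \ref{sec:Prop}), an inequality of the form
\begin{equation}
\left\Vert B(\gamma_1,\gamma_2) \right\Vert_{L^1_T H^{\alpha',\beta'}}
\leq C\, T^{\frac{1}{2}} \left( \left\Vert \gamma_1 \right\Vert_{L^\infty_T H^{\alpha',\beta'}}
\left\Vert \gamma_2 \right\Vert_{L^\infty_T H^{\alpha,\beta}}
+ \left\Vert \gamma_1 \right\Vert_{L^\infty_T H^{\alpha,\beta}}
\left\Vert \gamma_2 \right\Vert_{L^\infty_T H^{\alpha',\beta'}} \right)
\end{equation}
valid for $(\alpha',\beta')=(\alpha+r,\beta)$ in case (i) and $(\alpha',\beta')=(\alpha,\beta+r)$ in case (ii), whenever $\alpha,\beta>\frac{d-1}{2}$. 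The point is that the ``extra'' weight $\langle\xi+\xi'\rangle^r$ or $\langle\xi-\xi'\rangle^r$ can always be assigned to whichever of the two input factors it acts on after the change of variables defining $B^\pm$, since the shifts $P_\omega(x-x')$ and $R_\omega(z)$ are linear and the $z$-integral is against $\hat{\mathbf b}^\omega$, which is bounded (a Schwartz function, in fact a $\delta$ when $\mathbf b$ is constant).

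Granting that estimate, I would proceed as follows. Fix the time interval $[0,T]$ on which $\gamma\in L^\infty_T H^{\alpha,\beta}$ with $M:=\Vert\gamma\Vert_{L^\infty_T H^{\alpha,\beta}}$. Partition $[0,T]$ into finitely many subintervals $[t_{j},t_{j+1}]$ each of length $\leq\tau$, where $\tau$ is chosen (depending only on $M$, $\alpha$, $\beta$, $r$) so small that $C\tau^{1/2}M<\frac12$. On the first subinterval, set up the Duhamel map
\begin{equation}
\Phi[\sigma](t) = e^{\frac{1}{2} i t \Delta_\pm} \gamma(0) - i \int_0^t e^{\frac{1}{2} i (t-t_1)\Delta_\pm} \big( B(\sigma,\gamma) + B(\gamma,\sigma) - B(\sigma,\sigma)\big)(t_1)\, dt_1
\end{equation}
on the ball $\{\Vert\sigma\Vert_{L^\infty H^{\alpha',\beta'}}\leq 2\Vert\gamma(0)\Vert_{H^{\alpha',\beta'}}\}$; using the bilinear estimate above together with the known $H^{\alpha,\beta}$ smallness of $\gamma$ on that short interval, $\Phi$ is a contraction, so it has a unique fixed point $\sigma$ in the high-regularity ball. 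Then I must argue that $\sigma=\gamma$ on $[0,\tau]$: both solve the same Duhamel equation and both lie in $L^\infty H^{\alpha,\beta}$ with $B(\cdot,\cdot)\in L^1 H^{\alpha,\beta}$, so uniqueness in Theorem \ref{thm:lwp} (applied on a possibly shorter interval and then iterated) forces them to coincide. Hence $\gamma\in L^\infty_{[0,\tau]}H^{\alpha',\beta'}$ with a quantitative bound $\lesssim \Vert\gamma(0)\Vert_{H^{\alpha',\beta'}}$. One then repeats the argument starting from $t_1=\tau$, with new datum $\gamma(\tau)\in H^{\alpha',\beta'}$ (finite, by the previous step) and the same $M$; after finitely many steps one covers all of $[0,T]$, and the final bound on $\Vert\gamma\Vert_{L^\infty_T H^{\alpha',\beta'}}$ is the initial high norm amplified by a constant raised to the number of steps, i.e.\ possibly enormous but finite — consistent with the remark that higher Sobolev norms may grow much faster. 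The claim $B(\gamma,\gamma)\in L^1_T H^{\alpha',\beta'}$ then follows by applying the bilinear estimate once more with the now-established $L^\infty_T H^{\alpha',\beta'}$ bound.

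The main obstacle, and the step deserving the most care, is the bilinear estimate with one factor in the higher space: one must check that attaching the weight $\langle\xi+\xi'\rangle^r$ (case (i)) or $\langle\xi-\xi'\rangle^r$ (case (ii)) to $B^\pm(\gamma_1,\gamma_2)$ can be redistributed onto the arguments $\gamma_1,\gamma_2$ after the affine substitutions in the definitions of $B^\pm$, without spoiling the Strichartz-type bilinear bound that underlies Theorem \ref{thm:lwp} and without requiring regularity above the threshold $\frac{d-1}{2}$ in the \emph{other} slot. For case (i) the weight $\langle\xi+\xi'\rangle$ corresponds to $(1-\Delta_x)^{1/2}$ on $f$, i.e.\ to the ``center of mass'' variable $\frac{x+x'}{2}$; the shifts in $B^\pm$ act on $x-x'$ (the loss term) or on $x-x'$ and $z$ (the gain term), so the center-of-mass derivative essentially passes through, landing on the second factor $\gamma_2$ (whose center of mass is shifted by the linear map above), and Leibniz plus the boundedness of $\hat{\mathbf b}^\omega$ does the rest; since $r$ is an integer here, I can even avoid fractional-weight subtleties by iterating the case $r=1$. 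For case (ii) the weight $\langle\xi-\xi'\rangle\sim\langle 2v\rangle$ is a genuine velocity moment; here $r$ is real, so I would handle the fractional weight via the elementary inequality $\langle a+b\rangle^r\lesssim_r \langle a\rangle^r + \langle b\rangle^r$ applied to the two arguments of $\hat\gamma$ after the substitution, again reducing to attaching $\langle\xi-\xi'\rangle^r$ to one of $\gamma_1,\gamma_2$ at a time. The constant-kernel subtlety (where $\hat{\mathbf b}^\omega$ is a $\delta$ at $z=0$) is actually favorable: the $z$-integral collapses, so there is even less to estimate, and in fact case (ii) does not need $\mathbf b$ constant at all since velocity moments, unlike $v$-derivatives, are propagated by the loss term regardless.
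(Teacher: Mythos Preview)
There is a genuine structural gap in your proposal. The bilinear estimate you posit,
\[
\|B(\gamma_1,\gamma_2)\|_{L^1_T H^{\alpha',\beta'}} \leq CT^{1/2}\big(\|\gamma_1\|_{L^\infty_T H^{\alpha',\beta'}}\|\gamma_2\|_{L^\infty_T H^{\alpha,\beta}} + \text{symm.}\big),
\]
is not available in this framework. Proposition \ref{prop:KeyProp} (the ``refined proposition'' you allude to) bounds $B^\pm$ only on \emph{freely propagated} pairs $e^{\frac{1}{2}it\Delta_\pm}\gamma_j$, in $L^2_t H^{\alpha,\beta}$; it does not give a fixed-time bound $H^{\alpha',\beta'}\times H^{\alpha,\beta}\to H^{\alpha',\beta'}$, and the paper explicitly notes (see the discussion opening Section \ref{sec:treg} and Appendix \ref{sec:AppA}) that such fixed-time bounds require $\alpha,\beta>d/2$ and come with a loss in $\beta$. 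Your Duhamel map $\Phi$ therefore cannot be estimated as written: bounding $\|B(\sigma,\gamma)\|_{L^1_T H^{\alpha',\beta'}}$ by $L^\infty_T$-norms of $\sigma,\gamma$ is precisely the unavailable estimate.

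The paper's remedy is to carry not just $\gamma$ (or $\partial_*^{\mathbf k}\gamma$) but also the collision term $\zeta=B(\gamma,\gamma)$ (resp.\ $\zeta_{\mathbf k}=B(\partial_*^{\mathbf k}\gamma,\gamma)+B(\gamma,\partial_*^{\mathbf k}\gamma)$) as an unknown in its own space $L^1_T H^{\alpha',\beta'}$, and to write a \emph{second} Duhamel-type equation for $\zeta_{\mathbf k}$ obtained by applying $B$ to the Duhamel formula for $\partial_*^{\mathbf k}\gamma\otimes\gamma$. After Fubini and Cauchy--Schwarz in time, every instance of $B$ in that second equation acts on a pair of freely propagated functions, so Proposition \ref{prop:KeyProp} applies and yields the $T^{1/2}$ factor. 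One then closes a coupled estimate on the pair $(\partial_*^{\mathbf k}\gamma,\zeta_{\mathbf k})$ in $L^\infty_T H^{\alpha,\beta}\times L^1_T H^{\alpha,\beta}$ for short $T$ depending only on the low norm $M$, and iterates. For part (i) this is combined with the Leibniz identity (Lemma \ref{lemma:deriv-commute}) and an induction on $|\mathbf k|$; your instinct to reduce to integer steps is correct. For part (ii), however, your plan of redistributing $\langle\xi-\xi'\rangle^r$ over $B^+$ via $\langle a+b\rangle^r\lesssim\langle a\rangle^r+\langle b\rangle^r$ does not go through cleanly: the gain term involves $P_\omega(x-x')$, which mixes the parallel and perpendicular components of $\xi-\xi'$ between the two factors (cf.\ the multinomial identity (\ref{eq:Mts-Bp-m}), which is for $\langle x-x'\rangle$, not the Fourier weight, and only for even integers and constant kernel). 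The paper instead exploits the $\delta>0$ case of Proposition \ref{prop:KeyProp}: $B^+$ genuinely \emph{regularizes} by a small $\delta$ in the $\beta$ index, while $B^-$ does not but commutes the extra $\langle\nabla_x-\nabla_{x'}\rangle^\delta$ onto the first argument; one then bootstraps from $\beta$ to $\beta+\delta$ to $\beta+2\delta$ and so on up to $\beta+r$, with the step size $\delta$ fixed by $\alpha,\beta,d$.
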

\begin{remark}
As with Theorem \ref{thm:mts}, we can extract quantitative estimates from the proof of
Theorem \ref{thm:pers}, but they may grow very rapidly with time.
\end{remark}

We also prove, in Sections \ref{sec:treg} and \ref{sec:cont} respectively, results on regularity in
time and also the continuity of the solution map.\footnote{Note in particular that the proof in
Section \ref{sec:treg} relies upon the bilinear estimates \emph{with loss} proven in Appendix 
\ref{sec:AppA}; we emphasize that those bilinear estimates cannot replace
Proposition \ref{prop:KeyProp} elsewhere in this paper, regardless of the size of $\alpha,\beta$.}
We quote those results here for the convenience of the reader.
\begin{proposition}
\label{prop:treg-0}
Let $\gamma(t)$ be a solution of Boltzmann's equation with $\left\Vert \mathbf{b} \right\Vert_\infty
< \infty$, and suppose $\gamma \in L^\infty_T H^{\alpha,\beta}$ and
$B(\gamma,\gamma) \in L^1_T H^{\alpha,\beta}$ for some $\alpha,\beta > \frac{d-1}{2}$.
Further suppose that $K>0$ is an integer with $K < \min(\alpha,\beta)-\frac{d}{2}$.
 Then for any integer $k$ with
$1\leq k \leq K$ there holds
$\partial_t^k \gamma \in L^\infty_T H^{\alpha-k,\beta-k}$.
\end{proposition}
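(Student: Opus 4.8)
The plan is to prove Proposition \ref{prop:treg-0} by induction on $k$, at each stage differentiating the Duhamel formula (\ref{eq:gamma-boltz}) in time and recognizing the resulting object as the solution of a new fixed-point problem to which Theorem \ref{thm:lwp} applies. First I would treat $k=1$. Formally, differentiating (\ref{eq:gamma-boltz}) in $t$ and using that $\gamma$ solves the Schr\"odinger-type equation gives
\begin{equation}
\partial_t \gamma(t) = e^{\frac{1}{2}it\Delta_\pm}\left(\tfrac{i}{2}\Delta_\pm \gamma(0) - i B(\gamma(0),\gamma(0))\right) - i\int_0^t e^{\frac{1}{2}i(t-t_1)\Delta_\pm} \left( B(\partial_t\gamma(t_1),\gamma(t_1)) + B(\gamma(t_1),\partial_t\gamma(t_1)) \right) dt_1 .
\end{equation}
So if we set $g(t) = \partial_t\gamma(t)$, then $g$ solves a \emph{linear} inhomogeneous Duhamel equation with source $B(g,\gamma)+B(\gamma,g)$ and data $g(0) = \tfrac{i}{2}\Delta_\pm\gamma(0) - iB(\gamma(0),\gamma(0))$. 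Since $\gamma(0)\in H^{\alpha,\beta}$ and $\Delta_\pm$ costs two $x$-derivatives, the natural space for $g(0)$ is $H^{\alpha-2,\beta}$; one checks using the bilinear structure (Proposition \ref{prop:KeyProp}, with $\alpha,\beta$ still above $(d-1)/2$ after losing derivatives, which is where the hypothesis $K<\min(\alpha,\beta)-\tfrac{d}{2}$ enters) that $B(\gamma(0),\gamma(0))\in H^{\alpha,\beta}\subset H^{\alpha-2,\beta}$ as well. The key point is that the map $g\mapsto -i\int_0^t e^{\cdots}(B(g,\gamma)+B(\gamma,g))$ is a \emph{contraction} on $L^\infty_{T'}H^{\alpha-k,\beta-k}$ for $T'$ small, by exactly the multilinear estimate behind Theorem \ref{thm:lwp} (bilinearity in $\gamma$ means the Lipschitz constant is $\lesssim T'^{1/2}\|\gamma\|_{L^\infty_T H^{\alpha,\beta}}$, small for small $T'$); then one reopens the argument on successive short intervals to cover all of $[0,T]$, absorbing the growth into the (non-optimal) quantitative bounds the remark already disclaims.

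Second, I would run the induction: assume $\partial_t^j\gamma\in L^\infty_T H^{\alpha-j,\beta-j}$ and $B(\partial_t^j\gamma,\partial_t^{j'}\gamma)\in L^1_T H^{\alpha-j-j',\beta-j-j'}$ for all relevant $j,j'$, and differentiate the Duhamel formula $k$ times. The Leibniz rule produces $\partial_t^k\gamma(t) = e^{\frac{1}{2}it\Delta_\pm}(\partial_t^k\gamma)(0) - i\int_0^t e^{\frac{1}{2}i(t-t_1)\Delta_\pm} \sum_{j=0}^{k-1}\binom{k-1}{j} B(\partial_t^j\gamma, \partial_t^{k-1-j}\gamma)\, dt_1$, where I have used $\partial_t^{k-1}B(\gamma,\gamma) = \sum_j \binom{k-1}{j}B(\partial_t^j\gamma,\partial_t^{k-1-j}\gamma)$. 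The initial datum $(\partial_t^k\gamma)(0)$ is computed purely algebraically from $\gamma(0)$ by repeatedly substituting $\partial_t\gamma = \tfrac{i}{2}\Delta_\pm\gamma - iB(\gamma,\gamma)$: it is a polynomial expression in $\gamma(0)$ involving at most $k$ applications of $\Delta_\pm$ (hence $2k$ $x$-derivatives, manageable since $2k < \min(\alpha,\beta) < \alpha$... wait, actually since we only claim $H^{\alpha-k,\beta-k}$, we need $(\partial_t^k\gamma)(0)\in H^{\alpha-k,\beta-k}$; the worst term $\tfrac{i}{2}\Delta_\pm$ applied $k$ times to $\gamma(0)\in H^{\alpha,\beta}$ lands in $H^{\alpha-2k,\beta}$, and since $2k\le\alpha-k$ iff $3k\le\alpha$ — so one actually needs to be slightly careful, but $H^{\alpha-2k,\beta}\hookrightarrow H^{\alpha-k,\beta-k}$ holds as long as $\alpha-2k$ and $\beta-k$ are both $\ge(d-1)/2$ and $\alpha-2k\le\alpha-k$, $\beta\ge\beta-k$, which is fine). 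The terms involving $B$ are handled by Proposition \ref{prop:KeyProp}: $B$ maps $H^{a,b}\times H^{a,b}\to H^{a,b}$ boundedly (after the $T^{1/2}$ in $L^1_T$), so $B(\partial_t^j\gamma,\partial_t^{k-1-j}\gamma)$, with the lower of the two derivative counts dictating the loss, lands in $H^{\alpha-(k-1)-1,\beta-(k-1)-1}\supseteq H^{\alpha-k,\beta-k}$ — the crude bound suffices. Then, just as in the $k=1$ case, close the argument for $\partial_t^k\gamma$ by treating the top-order piece $B(\partial_t^{k-1}\gamma,\gamma)+B(\gamma,\partial_t^{k-1}\gamma)$ as the "unknown-times-given" part giving a contraction, and the remaining $B(\partial_t^j\gamma,\partial_t^{k-1-j}\gamma)$ with $1\le j\le k-2$ as a known source term in $L^1_T H^{\alpha-k,\beta-k}$ by the inductive hypothesis.

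The main obstacle is not any single estimate but the bookkeeping of the derivative budget together with the technical point flagged in the paper's footnote: one must verify that the solution of the \emph{new} fixed-point problem for $\partial_t^k\gamma$ genuinely coincides with the time derivative of the original solution $\gamma$, rather than merely being some function satisfying the differentiated Duhamel identity. The clean way to do this is to first run the whole argument at \emph{regularized} data (e.g. frequency-truncated $\gamma_0$), where all manipulations — differentiating under the integral, applying $\Delta_\pm$, the Leibniz rule — are rigorously justified by smoothness, obtain the identity $\partial_t^k\gamma^{(n)} = $ (fixed point), and then pass to the limit using the contraction estimates, which are uniform in $n$ on a fixed short time interval; the persistence-of-regularity philosophy behind Theorem \ref{thm:pers} is exactly what makes this limiting procedure legitimate. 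I would state this reduction to smooth data once at the start and not belabor it, consistent with the paper's stated convention. Everything else — the contraction on short intervals, reiterating to exhaust $[0,T]$, and the growth of constants — is routine given Theorem \ref{thm:lwp} and Proposition \ref{prop:KeyProp}, and the hypothesis $K<\min(\alpha,\beta)-\tfrac{d}{2}$ is precisely (a slightly conservative version of) what is needed to keep all intermediate regularity indices strictly above the threshold $\tfrac{d-1}{2}$ so that those two black-box results remain applicable at every stage.
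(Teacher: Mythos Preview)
Your overall induction strategy is the same as the paper's, but there is a genuine gap at the very first step: you claim that $B(\gamma(0),\gamma(0))\in H^{\alpha,\beta}$ ``using the bilinear structure (Proposition \ref{prop:KeyProp}),'' and later that ``$B$ maps $H^{a,b}\times H^{a,b}\to H^{a,b}$ boundedly (after the $T^{1/2}$ in $L^1_T$).'' This is precisely what Proposition \ref{prop:KeyProp} does \emph{not} give. That proposition is a spacetime estimate: it bounds $B^\pm(e^{\frac{1}{2}it\Delta_\pm}\gamma_1,e^{\frac{1}{2}it\Delta_\pm}\gamma_2)$ in $L^2_t H^{\alpha,\beta}$, and says nothing about $B(\gamma_1,\gamma_2)$ at a fixed time. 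The paper makes exactly this point in Section \ref{sec:treg} and resolves it by proving a separate fixed-time bilinear estimate with loss (Proposition \ref{prop:AppA-lossy} in Appendix \ref{sec:AppA}), which requires $\alpha,\beta>\frac{d}{2}$ rather than $\frac{d-1}{2}$ and costs a small amount of $\beta$. This is the actual reason the hypothesis reads $K<\min(\alpha,\beta)-\frac{d}{2}$, not $-\frac{d-1}{2}$ as you suggest; your explanation of that hypothesis is incorrect.

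There are also two bookkeeping errors. First, $\Delta_\pm=\Delta_x-\Delta_{x'}=(\nabla_x+\nabla_{x'})\cdot(\nabla_x-\nabla_{x'})$ costs one unit in \emph{each} of $\alpha$ and $\beta$, so $\Delta_\pm\gamma(0)\in H^{\alpha-1,\beta-1}$, not $H^{\alpha-2,\beta}$; this is why the target space is $H^{\alpha-k,\beta-k}$ with symmetric loss, and your embedding discussion (``$H^{\alpha-2k,\beta}\hookrightarrow H^{\alpha-k,\beta-k}$'') is both unnecessary and wrong. Second, differentiating the equation $k$ times gives source $\partial_t^k B(\gamma,\gamma)=\sum_{j=0}^k\binom{k}{j}B(\partial_t^j\gamma,\partial_t^{k-j}\gamma)$, not $\sum_{j=0}^{k-1}\binom{k-1}{j}B(\partial_t^j\gamma,\partial_t^{k-1-j}\gamma)$; in particular the top-order ``unknown-times-given'' piece is $B(\partial_t^k\gamma,\gamma)+B(\gamma,\partial_t^k\gamma)$, not the $(k-1)$-version you wrote. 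With your formula there would be no fixed point to close at level $k$ at all.
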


\begin{proposition}
\label{prop:cont-0}
Let $\gamma^j (t)$ be a solution of Boltzmann's equation with $\left\Vert \mathbf{b}
\right\Vert_\infty < \infty$, for $j=1,2$, with
$\gamma^j \in L^\infty_T H^{\alpha,\beta}$ and
$B(\gamma^j,\gamma^j) \in L^1_T H^{\alpha,\beta}$ for $j=1,2$ and some
$\alpha,\beta > \frac{d-1}{2}$. Furthermore, suppose that
$\left\Vert \gamma^j \right\Vert_{L^\infty_T H^{\alpha,\beta}} \leq M$ for $j=1,2$. Then we
have
\begin{equation}
\left\Vert \gamma^1 - \gamma^2 \right\Vert_{L^\infty_T H^{\alpha,\beta}}
\leq C_{M,T} \left\Vert \gamma^1 (0) - \gamma^2 (0) \right\Vert_{H^{\alpha,\beta}}
\end{equation}
where the constant may depend on $\alpha,\beta$.
\end{proposition}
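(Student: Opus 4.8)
The plan is to run the standard difference-of-solutions argument in the Duhamel formulation (\ref{eq:gamma-boltz}), driven by the bilinear estimate of \cite{CDP2017} (Proposition \ref{prop:KeyProp}) with the same $T^{1/2}$ gain exploited in the proof of Theorem \ref{thm:lwp}. Set $w = \gamma^1 - \gamma^2$. Subtracting the two instances of (\ref{eq:gamma-boltz}) and using bilinearity of $B$ in the form $B(\gamma^1,\gamma^1)-B(\gamma^2,\gamma^2) = B(w,\gamma^1)+B(\gamma^2,w)$ gives
\begin{equation}
w(t) = e^{\frac{1}{2}it\Delta_{\pm}}w(0) - i\int_0^t e^{\frac{1}{2}i(t-t_1)\Delta_{\pm}}\bigl[B(w(t_1),\gamma^1(t_1)) + B(\gamma^2(t_1),w(t_1))\bigr]\,dt_1 .
\end{equation}
Since $\gamma^j \in L^\infty_T H^{\alpha,\beta}$ and $B(\gamma^j,\gamma^j)\in L^1_T H^{\alpha,\beta}$, the Duhamel formula shows that each $\gamma^j$, hence also $w$, lies in $C([0,T];H^{\alpha,\beta})$, so all pointwise-in-time values below make sense.

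First I would estimate $w$ on a short interval $[t_0,t_0+\tau]$. By uniqueness in Theorem \ref{thm:lwp}, the restriction of $\gamma^j$ to such an interval (of length $\tau$ not exceeding the local existence time for data of size $M$) coincides with the solution furnished by Theorem \ref{thm:lwp} with datum $\gamma^j(t_0)$, whose norm is $\le M$; in particular the space-time norms of $\gamma^j$ relevant to Proposition \ref{prop:KeyProp} are bounded by $C(M)\tau^{1/2}$, uniformly in $t_0$. Since $e^{\frac{1}{2}it\Delta_{\pm}}$ is unitary on $H^{\alpha,\beta}$, applying Proposition \ref{prop:KeyProp} to the Duhamel term (whose two arguments are $w$ and $\gamma^j$) yields
\begin{equation}
\|w\|_{L^\infty([t_0,t_0+\tau];H^{\alpha,\beta})} \le \|w(t_0)\|_{H^{\alpha,\beta}} + C\,M\,\tau^{1/2}\,\|w\|_{L^\infty([t_0,t_0+\tau];H^{\alpha,\beta})} .
\end{equation}
Shrinking $\tau = \tau(M,\alpha,\beta)$ so that $CM\tau^{1/2}\le \frac{1}{2}$, the last term is absorbed and $\|w\|_{L^\infty([t_0,t_0+\tau];H^{\alpha,\beta})} \le 2\|w(t_0)\|_{H^{\alpha,\beta}}$. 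Covering $[0,T]$ by $N=\lceil T/\tau\rceil$ such intervals and iterating this bound gives $\|w\|_{L^\infty_T H^{\alpha,\beta}} \le 2^{N}\|w(0)\|_{H^{\alpha,\beta}} =: C_{M,T}\|w(0)\|_{H^{\alpha,\beta}}$, which is the assertion; uniqueness of solutions in this class is the special case $\gamma^1(0)=\gamma^2(0)$.

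There is no essential analytic obstacle here: all the work sits in Proposition \ref{prop:KeyProp}, and the rest is the Gronwall-type iteration above. The only points requiring care are the bookkeeping issues flagged in the introduction — that the restriction of a solution to a subinterval is again a solution of (\ref{eq:gamma-boltz}) with controlled norms (handled via the semigroup property and uniqueness), and, should Proposition \ref{prop:KeyProp} be phrased in an $X^{s,b}$-type norm rather than directly via $L^1_T H^{\alpha,\beta}$, the insertion of the embedding of that norm into $C_T H^{\alpha,\beta}$ together with the fact that, for a solution on a short interval, the $X^{s,b}$-type norm is controlled by $M$ and, for $w$, by $\|w\|_{L^\infty H^{\alpha,\beta}}$. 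The constant $C_{M,T}$ grows exponentially in $T$; this is an artifact of the iteration, and, consistent with the rest of the paper, we make no attempt to optimize it.
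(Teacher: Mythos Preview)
Your overall strategy---Duhamel for the difference, a small-time absorption, and iteration in time---matches the paper's, but the central estimate is not justified as written. Proposition~\ref{prop:KeyProp} bounds $B$ only when \emph{both} arguments are free evolutions of fixed data, i.e.\ expressions of the form $B\bigl(e^{\frac{1}{2} i t \Delta_{\pm}}\phi_1,\,e^{\frac{1}{2} i t \Delta_{\pm}}\phi_2\bigr)$. It does \emph{not} provide a bound of the shape $\|B(u,v)\|_{L^1_T H^{\alpha,\beta}}\lesssim T^{1/2}\|u\|_{L^\infty_T H^{\alpha,\beta}}\|v\|_{L^\infty_T H^{\alpha,\beta}}$ for general time-dependent $u,v$, and your Duhamel integrand $B(w(t_1),\gamma^1(t_1))$ is of this latter type. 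Consequently the displayed inequality $\|w\|_{L^\infty}\le\|w(t_0)\|+CM\tau^{1/2}\|w\|_{L^\infty}$ does not follow from Proposition~\ref{prop:KeyProp} as stated; invoking Theorem~\ref{thm:lwp} only controls $\|B(\gamma^j,\gamma^j)\|_{L^1}$, not the mixed terms $B(w,\gamma^j)$.

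The paper closes this gap by the Chen--Pavlovi\'c ``second Duhamel'' device already used in Sections~\ref{sec:Mts}--\ref{sec:pers}: one introduces $\zeta^r:=B(\gamma^1,\gamma^1)-B(\gamma^2,\gamma^2)$ as an unknown in its own right and derives a Duhamel-type equation for it by expanding each $\gamma^j$ inside $B$ via its own integral formula. Every term in the resulting equation for $\zeta^r$ has the required structure $B\bigl(e^{\frac{1}{2} i(t-t_1)\Delta_{\pm}}\,\cdot\,,\,e^{\frac{1}{2} i(t-t_1)\Delta_{\pm}}\,\cdot\,\bigr)$, so Proposition~\ref{prop:KeyProp} applies termwise. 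One then closes on the coupled pair $\bigl(\|\gamma^r\|_{L^\infty_T H^{\alpha,\beta}},\,\|\zeta^r\|_{L^1_T H^{\alpha,\beta}}\bigr)$---the paper packages this as $A_T=T^{1/2}\|\gamma^r\|_{L^\infty}+\|\zeta^r\|_{L^1}$---absorbs for small $T$, and iterates. Your aside about $X^{s,b}$ norms points at the right issue, but no such norm is set up here; the substitute is precisely this coupled system for $(\gamma^r,\zeta^r)$, and that is the missing ingredient in your argument.
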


\begin{remark}
Most likely, the proofs of the preceding theorems can be combined to prove propagation of
mixed derivatives and moments; for example,
\begin{equation}
\left< x+x^\prime \right>^{k_1} \left< x-x^\prime \right>^{k_2}
\left< \nabla_x + \nabla_{x^\prime} \right>^{k_3}
\left< \nabla_x - \nabla_{x^\prime} \right>^{k_4}
\partial_t^{k_5} \gamma (t)
\end{equation}
We will not address these mixed estimates in detail. For what follows, it will suffice to
notice that by Fourier transforming in time (only), we can always estimate
\begin{equation}
\left< x+x^\prime \right>^k \left< x-x^\prime \right>^k \partial_t^k
\lesssim
\left< x+x^\prime \right>^{3k} + \left< x-x^\prime \right>^{3k} + \partial_t^{3k}
\end{equation}
This is obvious in $L^2_t L^2_{x,x^\prime}$ but the same estimate holds in $L^2_t H^{\alpha,\beta}$ 
as well, at least
for integer values of $\alpha$ and $\beta$. In order to apply this estimate in practice, we must
use a smooth compactly supported cut-off in the time variable; to this end, it is helpful to
solve Boltmann's equation \emph{backwards} in time on a short time interval. This allows us
to perform estimates on $[0,T_1]$ (with $T_1 < T$) by choosing a cut-off which is supported on
$[-\Delta, T]$ for sufficiently small $\Delta > 0$. Due to the Grad cut-off condition,
there is no difficulty in solving Boltzmann's equation backwards for a short time interval.
\end{remark}

Using Theorem \ref{thm:mts} and Proposition \ref{prop:treg-0},
 we can construct solutions $\gamma(t)$ which have very strong
regularity and decay properties on a short time interval. For such solutions, we can reverse
the steps from Appendix A of \cite{CDP2017} and thereby show that
$f = \mathcal{W} [\gamma]$ is a \emph{classical} solution of Boltzmann's equation. In particular,
conservation of mass, momentum, and energy follow by the usual computations. In view of the
next result on non-negativity, we can also prove the $H$-theorem for solutions having enough
regularity and decay, under the additional assumption that
$f(0) = \mathcal{W} [\gamma(0)] \geq c e^{-c(|x|^2 + |v|^2)}$ for some $c>0$. Obviously we could
optimize the spaces in which energy conservation holds by density arguments; we will not
state a precise result along these lines.

One very important issue which was not addressed in \cite{CDP2017} was the \emph{non-negativity}
of solutions. Only non-negative solutions of Boltzmann's equation are considered to have
physical meaning. Moreover, the conserved mass and energy only supply useful control for
non-negative solutions; and, the entropy is only \emph{defined} for non-negative solutions.
Combining all of the results quoted in this section, we can prove the following:
\begin{theorem}
\label{thm:nonneg}
Let $\gamma(t)$ be a solution of Boltzmann's equation, with $\mathbf{b} \equiv \textnormal{cst.}$;
furthermore, suppose that $\gamma \in L^\infty_T H^{\alpha,\beta}$ and
$B(\gamma,\gamma) \in L^1_T H^{\alpha,\beta}$ for some $\alpha,\beta \in
\left( \frac{d-1}{2},\infty \right)$. Then if
$f (0,x,v) = \mathcal{W} [\gamma(0)] (x,v) \geq 0$ for almost every $x,v\in \mathbb{R}^d$, then
for all $t \in [0,T]$ we have $f(t,x,v) = \mathcal{W} [\gamma(t)] (x,v) \geq 0$ for almost
every $x,v \in \mathbb{R}^d$.
\end{theorem}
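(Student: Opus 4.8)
The plan is to reduce non-negativity of $f(t)=\mathcal{W}[\gamma(t)]$ in the rough class $H^{\alpha,\beta}$ with $\alpha,\beta>\frac{d-1}{2}$ to the case of very smooth, rapidly decaying data, where we may invoke the non-negativity result of \cite{LZ2001} (a Gronwall-type argument on the negative part). The key enabling tools are the persistence-of-regularity results: Theorem \ref{thm:pers} (to push $\alpha$ and $\beta$ up), Theorem \ref{thm:mts}(i) (to gain moments $\langle x+x'\rangle^k\gamma$, i.e. decay in $v$), and Proposition \ref{prop:treg-0} (to get time regularity), all on a \emph{uniform} time interval. So first I would fix $\gamma(t)$ on $[0,T]$ in $L^\infty_T H^{\alpha,\beta}$ with $f_0=\mathcal{W}[\gamma_0]\geq 0$, and approximate $\gamma_0$ by a sequence $\gamma_0^{(n)}$ that is smooth, Schwartz-class (so that $f^{(n)}_0=\mathcal{W}[\gamma_0^{(n)}]$ is Schwartz), converges to $\gamma_0$ in $H^{\alpha,\beta}$, and — crucially — is chosen so that $f^{(n)}_0\geq 0$ as well. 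The mollification must be done at the level of $f$, not $\gamma$: convolving $f_0$ with a Gaussian (or any non-negative Schwartz kernel) in both $x$ and $v$, and multiplying by a non-negative cutoff, produces non-negative Schwartz $f^{(n)}_0$ converging to $f_0$ in $L^2_{x,v}$; a small additional smoothing/truncation argument upgrades convergence to the $H^{\alpha,\beta}$ norm. I would also need $f^{(n)}_0 \geq c_n e^{-c_n(|x|^2+|v|^2)}$ if invoking \cite{LZ2001} in the form stated here requires a Gaussian lower bound; this can be arranged by adding a tiny Gaussian $\varepsilon_n e^{-|x|^2-|v|^2}$, which costs nothing in the limit.

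Next, by Theorem \ref{thm:lwp} and the uniform bound $\|\gamma_0^{(n)}\|_{H^{\alpha,\beta}}\leq M+1$ for $n$ large, each $\gamma^{(n)}$ exists on a common interval $[0,T']$ with $T'$ depending only on $M,\alpha,\beta$; shrinking, we may take $T'\leq T$. Then by Theorem \ref{thm:pers}, Theorem \ref{thm:mts}(i) and Proposition \ref{prop:treg-0}, applied on $[0,T']$, the solution $\gamma^{(n)}$ lies in $L^\infty_{T'}H^{\alpha+r,\beta+r}$ and carries arbitrarily many moments $\langle x+x'\rangle^k$ and time derivatives, with norms finite (though possibly huge and $n$-dependent) on the \emph{fixed} interval $[0,T']$. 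This is precisely enough regularity and decay to reverse the computation of Appendix A of \cite{CDP2017} and conclude that $f^{(n)}=\mathcal{W}[\gamma^{(n)}]$ is a classical, rapidly decaying solution of the Boltzmann equation \eqref{eq:boltz} on $[0,T']$ with $f^{(n)}(0)=f^{(n)}_0\geq 0$. For such a solution, \cite{LZ2001} yields $f^{(n)}(t,x,v)\geq 0$ for all $t\in[0,T']$. Finally, continuity of the solution map (Proposition \ref{prop:cont-0}) gives $\gamma^{(n)}\to\gamma$ in $L^\infty_{T'}H^{\alpha,\beta}$, hence $f^{(n)}\to f$ in $L^\infty_{T'}L^2_{x,v}$, so $f(t)\geq 0$ a.e. on $[0,T']$. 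A continuation/bootstrap argument then extends this from $[0,T']$ to all of $[0,T]$: at time $T'$ we have $f(T')\geq 0$ in $H^{\alpha,\beta}$, and we repeat the argument starting from $T'$; since the step length $T'$ depends only on a uniform bound for $\|\gamma\|_{L^\infty_T H^{\alpha,\beta}}$, finitely many steps cover $[0,T]$.

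The main obstacle I expect is the approximation step: producing $\gamma_0^{(n)}$ that is simultaneously (a) smooth and Schwartz enough for the reverse-derivation of \cite{CDP2017} and for \cite{LZ2001} to apply, (b) convergent to $\gamma_0$ in the $H^{\alpha,\beta}$ norm (so that Proposition \ref{prop:cont-0} applies), and (c) such that $\mathcal{W}[\gamma_0^{(n)}]\geq 0$. Points (a) and (b) are in mild tension with (c), because the natural regularizations live on the $\gamma$ side while positivity is a condition on $f=\mathcal{W}[\gamma]$; the resolution is to do all mollification and truncation on the $f$ side with manifestly positivity-preserving operations (convolution with non-negative kernels, multiplication by non-negative cutoffs, addition of small Gaussians) and then check that these operations are also bounded and convergent in the $H^{\alpha,\beta}$ topology, which by the stated equivalence of norms is the weighted space $\|\langle 2v\rangle^\beta (1-\Delta_x)^{\alpha/2} f\|_{L^2_{x,v}}$ — a routine but slightly delicate verification. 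A secondary technical point, flagged already in the paper's own footnotes, is that the various propagation results are proven via \emph{new} fixed-point problems, so one must know the resulting object genuinely coincides with $\mathcal{W}^{-1}[f^{(n)}]$; as the authors note, this is formally immediate and I would handle it by the standard uniqueness argument in the lower-regularity space $H^{\alpha,\beta}$, where both the propagated solution and the Duhamel solution live and agree at $t=0$.
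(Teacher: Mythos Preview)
Your approach is essentially the same as the paper's own sketch: approximate by Schwartz data with non-negative Wigner transform, solve on a uniform-length interval, use the propagation results to upgrade to a classical solution, invoke \cite{LZ2001}, pass to the limit via Proposition~\ref{prop:cont-0}, and iterate in time. The paper phrases the iteration as a contradiction at the supremal time $T_1$ rather than a forward bootstrap, but this is cosmetic.

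There is one genuine omission, however. You invoke only Theorem~\ref{thm:mts}(i), and you misidentify what it gives: weights $\langle x+x'\rangle^k$ on $\gamma$ correspond to \emph{spatial} moments $\langle x\rangle^k$ of $f$, not decay in $v$ (the latter is already provided by large $\beta$ via Theorem~\ref{thm:pers}). To reverse the derivation in Appendix~A of \cite{CDP2017} and obtain a genuinely classical $f^{(n)}$ smooth in all variables, you also need decay of $\gamma^{(n)}$ in the $x-x'$ direction, equivalently $v$-derivatives of $f^{(n)}$; this is exactly what Theorem~\ref{thm:mts}(ii) supplies, and it is the \emph{only} place in the whole argument where the hypothesis $\mathbf{b}\equiv\mathrm{cst.}$ is actually used. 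Without part~(ii) you cannot justify that $\gamma^{(n)}(t)$ is Schwartz (only that it is smooth with decay in $x+x'$), and the reversal step and the application of \cite{LZ2001} are not cleanly available. The fix is simply to add Theorem~\ref{thm:mts}(ii) to your list of propagation inputs; everything else in your outline then goes through as written.
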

\begin{remark}
Note that in Theorem \ref{thm:nonneg} we do not require $\gamma$ to have any higher regularity
or moment bounds.
\end{remark}

We omit a complete proof of Theorem \ref{thm:nonneg}, but we will state a few remarks about
the proof. The first important point is that $\gamma(t)$ is actually  continuous in $H^{\alpha,\beta}$,
so we can  evaluate $\gamma(t)$ for \emph{any} $t\in [0,T]$. Fixing a solution
$\gamma \in L^\infty_T H^{\alpha,\beta}$ with $B(\gamma,\gamma) \in
L^1_T H^{\alpha,\beta}$, we define
\begin{equation}
T_1 = \sup \left\{ t_1 \in [0,T] \; : \;
f (t) = \mathcal{W}[\gamma(t)] \geq 0 \; \forall t \in [0,t_1] \right\}
\end{equation}
Assume by way of contradiction that $T_1 < T$. Since $\gamma(t)$ is continuous in time and
non-negativity is preserved under passage to the limit in $L^2$, we know that
$f (T_1) \geq 0$. Therefore, it suffices to propagate non-negativity on a \emph{small} time
interval (possibly much smaller than $T$). We pick a sequence of very regular functions
(say, in the Schwartz class) which converge to $\gamma (T_1)$ in $H^{\alpha,\beta}$; we use
these approximate functions as initial data in Boltzmann's equation.
Note carefully that the approximate solutions \emph{may not exist} on the full time
interval $[T_1,T]$, but they will have a time of existence which is bounded uniformly from below
due to uniform boundedness in $H^{\alpha,\beta}$. Since $f (T_1) \geq 0$, we can arrange for
the approximating functions to be non-negative at time $t=T_1$.
We can apply Theorem \ref{thm:pers},
Theorem \ref{thm:mts}, and Proposition \ref{prop:treg-0}
 to conclude that the approximating functions are smooth and rapidly
decaying for as long as they exist in $H^{\alpha,\beta}$; in particular, inverting the steps
from Appendix A of \cite{CDP2017}, we have a sequence of classical solutions of
Boltzmann's equation. We can apply the results of \cite{LZ2001} to conclude that the 
approximating sequence remains non-negative on a short time interval. Now we can pass to 
the limit, applying Theorem \ref{prop:cont-0}, to reach the desired contradiction.

\section{A Proposition and Two Lemmas}
\label{sec:Prop}

\begin{proposition}
\label{prop:KeyProp}
Suppose $\alpha,\beta \in \left( \frac{d-1}{2},\infty \right)$ and
 and
let $\delta \geq 0$ be chosen sufficiently small (with smallness depending continuously
on $\alpha,\beta,d$). Then there exists a constant $C$ (depending on $d,\alpha,\beta$)
such that if
$\left\Vert \mathbf{b} \right\Vert_\infty <\infty$ then for any 
$\gamma_1,\gamma_2 \in H^{\alpha,\beta}$, \textbf{both} the following estimates hold:
\begin{equation}
\label{eq:loss-est}
\begin{aligned}
& \left\Vert
B^- \left(
e^{\frac{1}{2} i t \left(\Delta_x - \Delta_{x^\prime}\right)}
\gamma_1,
e^{\frac{1}{2} i t \left( \Delta_x - \Delta_{x^\prime} \right)}
\gamma_2 \right)\right\Vert_{L^2_t H^{\alpha,\beta+\delta}} \leq \\
& \qquad \qquad \qquad \qquad \qquad \qquad \leq 
C \left\Vert \mathbf{b} \right\Vert_\infty
\left\Vert \gamma_1 \right\Vert_{H^{\alpha,\beta+\delta}}
\left\Vert \gamma_2 \right\Vert_{H^{\alpha,\beta}}
\end{aligned}
\end{equation}
\begin{equation}
\label{eq:gain-est}
\begin{aligned}
& \left\Vert
B^+ \left(
e^{\frac{1}{2} i t \left(\Delta_x - \Delta_{x^\prime}\right)}
\gamma_1,
e^{\frac{1}{2} i t \left( \Delta_x - \Delta_{x^\prime} \right)}
\gamma_2 \right)\right\Vert_{L^2_t H^{\alpha,\beta+\delta}} \leq \\
& \qquad \qquad \qquad \qquad \qquad \qquad \leq 
C \left\Vert \mathbf{b} \right\Vert_\infty
\left\Vert \gamma_1 \right\Vert_{H^{\alpha,\beta}}
\left\Vert \gamma_2 \right\Vert_{H^{\alpha,\beta}}
\end{aligned}
\end{equation}
\end{proposition}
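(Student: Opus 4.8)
## Proof Proposal for Proposition \ref{prop:KeyProp}

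The plan is to reduce both estimates to a single bilinear space-time estimate for the free Schr\"odinger evolution on the ``doubled'' configuration space, exploiting the fact that $B^\pm$ are built from pointwise products of the two Wigner kernels composed with affine changes of variables. First I would Fourier transform everything in $(x,x')$ and in $t$. Writing $\gamma_j(t) = e^{\frac{1}{2}it\Delta_\pm}\gamma_j$, the symbol of the propagator is $\exp(\tfrac{i}{2}t(|\xi|^2-|\xi'|^2))$, and after passing to the ``sum/difference'' variables $\sigma = \xi+\xi'$, $\delta\xi = \xi - \xi'$ this phase becomes $\exp(\tfrac{i}{2}t\,\sigma\cdot\delta\xi)$ — bilinear rather than quadratic. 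The outer $L^2_t$ norm then becomes, via Plancherel in $t$, an $L^2_\tau$ norm enforcing a resonance relation among the frequencies of the two factors; this is exactly the structure that makes a bilinear Strichartz / $X^{s,b}$-type estimate applicable. The crucial reduction is that, when $\mathbf{b}$ is merely bounded, $\hat{\mathbf{b}}^\omega(z/2)$ is a bounded multiplier in the $z$-integration and can be pulled out with a factor $\|\mathbf{b}\|_\infty$ after Cauchy–Schwarz in $\omega$ (the sphere has finite measure); so it suffices to prove the estimates with $\hat{\mathbf{b}}^\omega \equiv 1$, i.e. for the model constant kernel where the $z$-integral collapses to evaluation at $z=0$ (Bobylev's formula), and then note the bounded-kernel case follows by the same argument with the multiplier carried along.

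Next I would treat $B^-$ and $B^+$ in parallel. For $B^-$ with constant kernel, $B^-(\gamma_1,\gamma_2)(x,x') = i\,\rho_{\gamma_2}\!\big(\tfrac{x+x'}{2}\big)\,\gamma_1(x,x')$ as recorded in the excerpt, so on the Fourier side this is a convolution: $\widehat{B^-}(\sigma,\delta\xi) \sim \int \widehat{\rho_{\gamma_2}}(\sigma - \sigma_1)\,\hat\gamma_1(\sigma_1,\delta\xi)\,d\sigma_1$, where $\delta\xi$ is \emph{unchanged}. This is why the $B^-$ estimate only costs $\beta+\delta$ regularity on $\gamma_1$: the ``difference'' frequency passes through untouched, the $\langle\sigma\rangle^\alpha$ weight is distributed by the usual fractional Leibniz/Sobolev product argument between $\rho_{\gamma_2}$ and $\gamma_1$, and one absorbs $\langle\delta\xi\rangle^{\beta+\delta}$ entirely onto $\gamma_1$. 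The extra $\delta$-gain is where the $\alpha,\beta>\tfrac{d-1}{2}$ strict inequality is spent: I would choose $\delta$ small enough that $\min(\alpha,\beta)-\delta$ is still $>\tfrac{d-1}{2}$ and run the Kirkpatrick–Schlein–Staffilani-type bilinear estimate (cf. \cite{KM2008}, whose threshold is likewise $(d-1)/2$) with that slightly worse exponent. For $B^+$ the kernel involves the reflections $P_\omega, R_\omega$ acting on $x-x'$, which on the Fourier side mix $\sigma$ and $\delta\xi$ through an orthogonal map depending on $\omega$; but orthogonal maps preserve $\langle\cdot\rangle^s$ weights up to constants, so after the change of variables $\xi\mapsto R_\omega\xi$ (Jacobian one) the $B^+$ integrand has the same multilinear shape as $B^-$, except the weight-splitting is now symmetric — hence both $\gamma_1$ and $\gamma_2$ are measured in $H^{\alpha,\beta}$ (no $\delta$-loaded factor needed on a distinguished input), matching \eqref{eq:gain-est}. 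Uniformity in $\omega$ is immediate since all constants depend only on the orthogonal structure, and then integrate in $\omega$ over the compact sphere.

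The main obstacle is the bilinear space-time estimate itself, i.e. controlling $\big\|\int\widehat{\rho}(\sigma-\sigma_1)\hat\gamma_1(\sigma_1,\delta\xi)d\sigma_1\big\|_{L^2_\tau L^2_{\sigma,\delta\xi}}$ (and its $B^+$ analogue) by the product of $H^{\alpha,\beta+\delta}$ and $H^{\alpha,\beta}$ norms, where the $\tau$-variable is tied to the bilinear phase $\sigma\cdot\delta\xi$ via the free evolutions. Concretely, after inserting the propagator phases the problem becomes: bound a trilinear-looking convolution in which the time-frequency $\tau$ equals a difference of the two factors' dispersion relations, then use the geometry of the level set $\{\tau = \tfrac12(\sigma_1\cdot\delta\xi_1 - \sigma_2\cdot\delta\xi_2)\}$ to gain integrability in one internal frequency. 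This is precisely the place where the dimension-dependent threshold $\tfrac{d-1}{2}$ enters, through a $\langle\cdot\rangle^{-2s}$ being integrable on a $(d-1)$-dimensional sphere of the constraint surface. Rather than reprove this from scratch I would cite the corresponding estimate from \cite{CDP2017} (whose Theorem \ref{thm:lwp} is the $\delta=0$ case) and verify that the proof there has enough slack to absorb the $\langle\delta\xi\rangle^\delta$ factor for small $\delta$ — which it does, since the estimate is not sharp at the endpoint and the $\beta$-exponent only needs $\beta+\delta>\tfrac{d-1}{2}$; the continuity of the smallness of $\delta$ in $(\alpha,\beta,d)$ then follows from the explicit way $\delta$ enters the exponent bookkeeping. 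A secondary, purely technical, annoyance is making the formal identity $B^-(\gamma,\gamma) = i\rho_\gamma(\tfrac{x+x'}{2})\gamma$ and the $B^+$ change of variables rigorous at the level of $H^{\alpha,\beta}$ distributions rather than Schwartz functions, which is handled by a routine density argument using Theorem \ref{thm:lwp}.
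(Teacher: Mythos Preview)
Your treatment of the loss term $B^-$ is essentially correct and matches the paper: since the output difference-frequency $\xi-\xi'$ equals the input difference-frequency of $\gamma_1$ (equivalently, $(\nabla_x-\nabla_{x'})$ commutes with $B^-$ in the first slot and with the free propagator), the weight $\langle\xi-\xi'\rangle^{\beta+\delta}$ passes straight to $\gamma_1$, and the $\delta=0$ estimate from \cite{CDP2017} does the rest. One small correction: $\hat{\mathbf{b}}^\omega$ is \emph{not} a bounded multiplier (boundedness of $\mathbf{b}$ does not bound its Fourier transform); what actually happens is that after taking the spatial Fourier transform of $B^\pm$, the $z$-integral undoes the hat and $\mathbf{b}$ itself reappears on the Fourier side, where it can be bounded by $\|\mathbf{b}\|_\infty$.

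The genuine gap is in your $B^+$ argument. The claim that $B^+_\omega$ reduces to $B^-$ by an orthogonal change of variables is false: for fixed $\omega$, the output difference-frequency $\xi-\xi'$ is \emph{shared} between the two inputs, with $\gamma_1$ carrying $(\mathbb{I}-P_\omega)(\xi-\xi')$ and $\gamma_2$ carrying $P_\omega(\xi-\xi')$ (plus free variables). So $\langle\xi-\xi'\rangle^{\beta+\delta}$ on the output cannot be absorbed by either input's $\langle\cdot\rangle^\beta$ weight uniformly in $\omega$; for a \emph{fixed} $\omega$ the ratio is unbounded. The mechanism that closes the estimate is precisely the angular average you dismiss as ``integrate over the compact sphere'': a dyadic decomposition in the angle between $\omega$ and $\xi-\xi'$ reduces the key integral to a geometric series $\sum_k 2^{-bk}$ with $b>0$ depending on $\beta-\tfrac{d-1}{2}$, and the $\delta$-gain in (\ref{eq:gain-est}) comes from the fact that this series still converges after perturbing $b\mapsto b-c\delta$. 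Your ``slack'' remark points in the right direction but misidentifies the source: it is not that $\beta+\delta>\tfrac{d-1}{2}$ lets you apply the $\delta=0$ bound at level $\beta+\delta$ (that would put $\beta+\delta$ on both inputs, not $\beta$), but rather that the angular-averaging step in the $\delta=0$ proof has a strictly positive decay exponent that survives a small shift of the \emph{target} regularity.
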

\begin{remark}
Note carefully that the gain term $B^+$ regularizes in the $\beta$ index; the 
loss term, by contrast, exhibits no such regularization.
\end{remark}
\begin{proof} (case $\delta > 0$) 
\\
The case $\delta=0$ is proven in \cite{CDP2017}, or see Appendix \ref{sec:AppB},
 so we only have to consider $\delta>0$.
It turns out that the proofs are almost identical to the proof from \cite{CDP2017} so we
only sketch the ideas.

For the loss estimate (\ref{eq:loss-est}), we have (for example) the following commutativity:
\begin{equation}
\left( \nabla_x - \nabla_{x^\prime} \right) B^- (\gamma_1,\gamma_2) =
B^- \left( \left( \nabla_x - \nabla_{x^\prime} \right) \gamma_1 ,
\gamma_2 \right)
\end{equation}
and  moreover $\left( \nabla_x - \nabla_{x^\prime} \right)$ commutes with the free
propagator $e^{\frac{1}{2} i t (\Delta_x - \Delta_{x^\prime})}$. Hence the result is
immediately obtained for $\delta = 1$ once it is known for $\delta = 0$. The same result
can be proven for any $0 < \delta < 1$ using Fourier analysis, as in \cite{CDP2017};
the required modifications to the proof presented there are completely trivial.

For the gain estimate (\ref{eq:gain-est}), we note that all the estimates for the gain term from
\cite{CDP2017}, or Appendix \ref{sec:AppB},
 are stable with respect to a small perturbation of the \emph{target}
regularity index $\beta$ (keeping the regularities of $\gamma_1,\gamma_2$ fixed). In fact, due
to the \emph{angular averaging} effect, the
entire argument boils down to proving the convergence of certain geometric series of the
form $\sum_{k=1}^\infty 2^{-b k}$ for some $b >0$; obviously, the series will still
converge if we perturb $b$ slightly.
\end{proof}

\begin{lemma}
Consider the Boltzmann equation with arbitrary bounded collision kernel.
Then for any real numbers $a,b \geq 0$, there holds
\begin{equation}
\label{eq:Mts-Bm-p}
\left< x+x^\prime\right>^{a+b} B^- (\gamma_1,\gamma_2) =
B^- \left( \left< x+x^\prime \right>^a \gamma_1,
\left< x+x^\prime\right>^b \gamma_2 \right)
\end{equation}
\begin{equation}
\label{eq:Mts-Bp-p}
\left< x+x^\prime\right>^{a+b} B^+ (\gamma_1,\gamma_2) =
B^+ \left( \left< x+x^\prime \right>^a \gamma_1,
\left< x+x^\prime\right>^b \gamma_2 \right)
\end{equation}
In the case that the collision kernel $\mathbf{b} \equiv \textnormal{cst.}$,
we also have for any positive integer $k$, and any $a,b\geq 0$,
\begin{equation}
\label{eq:Mts-Bm-m}
\left< x-x^\prime\right>^a B^- (\gamma_1,\gamma_2) = 
B^- \left( \left<x-x^\prime\right>^a \gamma_1,
\left< x-x^\prime\right>^b \gamma_2 \right)
\end{equation}
\begin{equation}
\left( x-x^\prime \right) B^- (\gamma_1,\gamma_2) =
B^- \left( (x-x^\prime) \gamma_1, \gamma_2 \right)
\end{equation}
\begin{equation}
\begin{aligned}
\label{eq:Mts-Bp-m}
& \left< x-x^\prime \right>^{2k} B^+ (\gamma_1,\gamma_2) = \\
& \qquad \quad = 
\sum_{j_1 + j_2 + j_3 = k} \binom{k}{j_1,j_2,j_3}
(-1)^{j_3} 
B^+ \left( \left< x-x^\prime\right>^{2j_1} \gamma_1,
\left< x-x^\prime \right>^{2j_2} \gamma_2 \right)
\end{aligned}
\end{equation}
\begin{equation}
\label{eq:Mts-Bp-m-2}
\begin{aligned}
\left( x-x^\prime \right) B^+ (\gamma_1,\gamma_2) =
B^+ \left( (x-x^\prime) \gamma_1, \gamma_2 \right) +
B^+ \left( \gamma_1, (x-x^\prime) \gamma_2 \right)
\end{aligned}
\end{equation}
\end{lemma}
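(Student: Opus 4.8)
The plan is to read every identity directly off the explicit formulas for $B^{\pm}$ by tracking, for each factor $\gamma_i$ in the integrand, the \emph{sum} and the \emph{difference} of its two arguments as functions of $x,x^\prime$ and of the integration variables $z,\omega$. The two $\langle x+x^\prime\rangle$ identities (\ref{eq:Mts-Bm-p})--(\ref{eq:Mts-Bp-p}), valid for any bounded $\mathbf{b}$, are the simplest: in both $B^-$ and $B^+$ the shift applied to the first argument of each $\gamma_i$ is exactly the negative of the shift applied to its second argument, so the sum of the two arguments of $\gamma_1$ equals $x+x^\prime$ and the sum of the two arguments of $\gamma_2$ equals $x+x^\prime$ as well, independently of $z$ and $\omega$. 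Hence substituting $\langle x+x^\prime\rangle^a\gamma_1$ and $\langle x+x^\prime\rangle^b\gamma_2$ into the integrand produces the constant prefactor $\langle x+x^\prime\rangle^{a+b}$, which pulls outside both integrals; taking $a$ in the first slot and $b$ in the second gives the two identities.

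The remaining identities require $\mathbf{b}\equiv\textnormal{cst.}$, so that $\hat{\mathbf{b}}^\omega(z/2)$ is a multiple of $\delta_0(z)$ and the $z$-integral collapses to $z=0$. With $z=0$, in $B^-$ the factor $\gamma_1$ is evaluated at $(x,x^\prime)$, whose argument-difference is $x-x^\prime$, while $\gamma_2$ is evaluated at the diagonal point $\left(\tfrac{x+x^\prime}{2},\tfrac{x+x^\prime}{2}\right)$, whose argument-difference is $0$. Thus multiplying $\gamma_1$ by any scalar- or vector-valued function of $x-x^\prime$ yields an extractable prefactor, which gives $(x-x^\prime)B^-(\gamma_1,\gamma_2)=B^-((x-x^\prime)\gamma_1,\gamma_2)$ and, by the same token, (\ref{eq:Mts-Bm-m}); multiplying $\gamma_2$ by $\langle x-x^\prime\rangle^b$ changes nothing, since $\gamma_2$ is sampled where $\langle 0\rangle^b=1$.

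For $B^+$ with $z=0$, the argument-difference of $\gamma_1$ is $(\mathbb{I}-P_\omega)(x-x^\prime)$ and that of $\gamma_2$ is $P_\omega(x-x^\prime)$. Since $P_\omega$ is the orthogonal projection onto $\mathbb{R}\omega$ and $\mathbb{I}-P_\omega$ its orthogonal complement, one has $|(\mathbb{I}-P_\omega)(x-x^\prime)|^2+|P_\omega(x-x^\prime)|^2=|x-x^\prime|^2$, hence
\[
\langle x-x^\prime\rangle^2 = \langle(\mathbb{I}-P_\omega)(x-x^\prime)\rangle^2+\langle P_\omega(x-x^\prime)\rangle^2-1 .
\]
Raising this to the $k$-th power by the multinomial theorem and then integrating over $\omega$ yields (\ref{eq:Mts-Bp-m}), once one notices that $\langle(\mathbb{I}-P_\omega)(x-x^\prime)\rangle^{2j_1}$ is precisely the prefactor created by replacing $\gamma_1$ with $\langle x-x^\prime\rangle^{2j_1}\gamma_1$ (and similarly for $\gamma_2$ with $\langle P_\omega(x-x^\prime)\rangle^{2j_2}$). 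The linear identity $x-x^\prime=(\mathbb{I}-P_\omega)(x-x^\prime)+P_\omega(x-x^\prime)$ gives (\ref{eq:Mts-Bp-m-2}) in the same way.

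The one point that is not purely mechanical is the $B^+$ case: the individual shift vectors $(\mathbb{I}-P_\omega)(x-x^\prime)$ and $P_\omega(x-x^\prime)$ depend on $\omega$, so neither can be pulled through the angular integral on its own --- only the $\omega$-independent quantity $\langle x-x^\prime\rangle^2$ is inert. The role of the multinomial expansion is exactly to recombine the $\omega$-dependent weights on $\gamma_1$ and $\gamma_2$ into this inert combination; tracking the ``$-1$'' coming from $\langle u\rangle^2=1+|u|^2$ is what forces the coefficients $\binom{k}{j_1,j_2,j_3}(-1)^{j_3}$ that appear in the statement. (One should also bear in mind that (\ref{eq:Mts-Bm-m}) and the identities of the first paragraph are just pointwise algebraic identities in the integrand, valid for arbitrary real $a,b\geq 0$, not only half-integers.)
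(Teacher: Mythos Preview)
Your proof is correct and follows essentially the same approach as the paper's: the paper singles out (\ref{eq:Mts-Bp-m}) as the only non-mechanical identity, proves it via the same orthogonality relation $\langle x-x^\prime\rangle^{2}=\langle(\mathbb{I}-P_\omega)(x-x^\prime)\rangle^{2}+\langle P_\omega(x-x^\prime)\rangle^{2}-1$ and the multinomial expansion, and handles (\ref{eq:Mts-Bp-m-2}) by the linear decomposition $x-x^\prime=(\mathbb{I}-P_\omega)(x-x^\prime)+P_\omega(x-x^\prime)$. Your write-up is in fact more detailed than the paper's, which leaves the remaining identities entirely to the reader.
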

\begin{proof}
Only (\ref{eq:Mts-Bp-m}) requires some explanation. The difficulty is that
the action of $B^+$ involves the projection $P_\omega (x-x^\prime)$, which
does not disappear when taking \emph{differences} of $x$ and $x^\prime$.
This is easily dealt with, however, by using the following orthogonality
property:
\begin{equation}
\left< x-x^\prime \right>^{2k} =
\left( \left< (\mathbb{I}-P_\omega)(x-x^\prime)\right>^2 +
\left< P_\omega (x-x^\prime) \right>^2 - 1 \right)^k
\end{equation}
and expanding terms using the multinomial formula. For (\ref{eq:Mts-Bp-m-2}), we
use the simpler decomposition:
\begin{equation}
x-x^\prime = \left( \mathbb{I}-P_\omega \right) (x-x^\prime) + P_\omega (x-x^\prime)
\end{equation}
and conclude by linearity of the collision integral.
\end{proof}

\begin{lemma}
\label{lemma:deriv-commute}
Consider the Boltzmann equation with arbitrary bounded collision kernel; then
there holds
\begin{equation}
\left( \nabla_x + \nabla_{x^\prime} \right) B^-( \gamma_1,\gamma_2) =
B^- \left( \left( \nabla_x + \nabla_{x^\prime} \right) \gamma_1, \gamma_2 \right)
+ B^- \left( \gamma_1, \left( \nabla_x + \nabla_{x^\prime} \right) \gamma_2 \right)
\end{equation}
\begin{equation}
\left( \nabla_x + \nabla_{x^\prime} \right) B^+( \gamma_1,\gamma_2) =
B^+ \left( \left( \nabla_x + \nabla_{x^\prime} \right) \gamma_1, \gamma_2 \right)
+ B^+ \left( \gamma_1, \left( \nabla_x + \nabla_{x^\prime} \right) \gamma_2 \right)
\end{equation}
\end{lemma}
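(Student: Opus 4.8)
The plan is to establish the two commutation identities by direct inspection of the integral formulas defining $B^-$ and $B^+$, exploiting the fact that the operator $\nabla_x + \nabla_{x^\prime}$ is precisely the generator of simultaneous translations in both arguments of a kernel $\gamma(x,x^\prime)$. The key structural observation is that in both $B^\pm$, every occurrence of the variables $x$ and $x^\prime$ in the arguments of $\gamma_1$ and $\gamma_2$ is through expressions of the form $x + (\text{shift})$ and $x^\prime + (\text{opposite shift})$, where the shifts depend only on the difference $x - x^\prime$ (via $P_\omega(x-x^\prime)$), on the integration variable $z$, and on $\omega$ — but never on the \emph{sum} $x + x^\prime$ in the case of $B^-$, and in a controlled way for $B^+$. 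Concretely, I would note that $\nabla_x + \nabla_{x^\prime}$ annihilates any function of $x - x^\prime$ alone, so it passes through the shift factors $P_\omega(x-x^\prime)$ and $R_\omega(z)$ unscathed.

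First I would treat $B^-$. Writing out $B^-(\gamma_1,\gamma_2)(x,x^\prime)$ as the $\omega,z$-integral of $\hat{\mathbf{b}}^\omega(z/2)\,\gamma_1(x-\tfrac z4, x^\prime+\tfrac z4)\,\gamma_2(\tfrac{x+x^\prime}{2}+\tfrac z4, \tfrac{x+x^\prime}{2}-\tfrac z4)$, I apply $\nabla_x + \nabla_{x^\prime}$ under the integral sign and use the chain rule. For the $\gamma_1$ factor, $(\nabla_x+\nabla_{x^\prime})$ acting on $\gamma_1(x-\tfrac z4, x^\prime+\tfrac z4)$ produces $\big[(\nabla_1+\nabla_2)\gamma_1\big](x-\tfrac z4, x^\prime+\tfrac z4)$, i.e. exactly $B^-((\nabla_x+\nabla_{x^\prime})\gamma_1,\gamma_2)$; for the $\gamma_2$ factor, the $x$- and $x^\prime$-dependence enters only through $\tfrac{x+x^\prime}{2}$ in both slots, and $(\nabla_x+\nabla_{x^\prime})$ applied to $h(\tfrac{x+x^\prime}{2})$ equals $(\nabla h)(\tfrac{x+x^\prime}{2})$, which again reproduces $B^-(\gamma_1,(\nabla_x+\nabla_{x^\prime})\gamma_2)$. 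Summing the two contributions by the Leibniz rule gives the claimed identity. Then I would carry out the same computation for $B^+$: the only additional feature is the presence of the term $\tfrac12 P_\omega(x-x^\prime)$ in the shifts. Since $\nabla_x+\nabla_{x^\prime}$ kills any function of $x-x^\prime$, differentiating the composite argument $x \mapsto x - \tfrac12 P_\omega(x-x^\prime) - \tfrac{R_\omega(z)}{4}$ with respect to $(\nabla_x+\nabla_{x^\prime})$ yields simply the gradient of $\gamma_1$ in its first slot (the $P_\omega$ and $R_\omega$ pieces contribute nothing to this particular combination of derivatives), and similarly for the second slot and for $\gamma_2$; the Leibniz rule then delivers the stated formula.

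There is really no serious obstacle here — the lemma is a bookkeeping exercise in the chain rule — but the one point requiring genuine care is verifying that $\nabla_x + \nabla_{x^\prime}$ genuinely annihilates the projection/reflection shifts rather than merely the raw difference $x-x^\prime$. I would make this explicit: if $g$ is any (matrix-valued) linear map, then the map $(x,x^\prime)\mapsto g(x-x^\prime)$ satisfies $(\partial_{x_i} + \partial_{x^\prime_i})\,g(x-x^\prime)_j = g(e_i)_j - g(e_i)_j = 0$ for each $i,j$, so in particular $P_\omega(x-x^\prime)$ and any affine function of it are invariant; this is the single structural fact that makes both identities work, and it is the reason the sum-of-gradients operator behaves so much more simply than the difference-of-gradients operator appearing in Proposition~\ref{prop:KeyProp} and the earlier moment lemma. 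I would then remark that an entirely analogous (indeed simpler) computation shows the same commutation for the free propagator, so that the identities survive conjugation by $e^{\frac12 i t(\Delta_x - \Delta_{x^\prime})}$ — though that is not needed for the lemma as stated. Finally, one should note that differentiation under the integral sign is justified because $\hat{\mathbf{b}}^\omega(z/2)$ is Schwartz in $z$ (bounded $\mathbf{b}$), or, in the constant-kernel case, because $\hat{\mathbf{b}}^\omega$ is a multiple of $\delta_{z=0}$ and the identities reduce to the elementary pointwise chain rule.
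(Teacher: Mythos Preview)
Your proposal is correct and is exactly the elementary chain-rule computation the paper alludes to; the key structural point you isolate---that $\nabla_x+\nabla_{x^\prime}$ annihilates any function of $x-x^\prime$, hence the $P_\omega(x-x^\prime)$ shifts---is precisely what makes the Leibniz rule go through cleanly for both $B^-$ and $B^+$. One small inaccuracy: for merely bounded $\mathbf{b}$ the distribution $\hat{\mathbf{b}}^\omega$ need not be Schwartz, so the differentiation-under-the-integral remark should instead appeal to the distributional interpretation of the $z$-integral (or to smooth $\gamma_j$ and density), but this does not affect the algebraic identity.
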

\begin{proof}
This is an elementary computation.
\end{proof}

\section{Moment Bounds for Density Matrices}
\label{sec:Mts}

In this section we treat propagation of moments of $\gamma$ in $x+x^\prime$ and
moments in $x-x^\prime$ in turn. Combining these results allows us to control
mixed moments as well, e.g. if we want to place
$\left< x \right> \left< \nabla_v \right> f$ in $L^\infty_T L^2_{x,v}$ we could use
\begin{equation}
\left< x+x^\prime \right> \left< x-x^\prime \right> \lesssim
\left< x+x^\prime \right>^2 + \left< x-x^\prime \right>^2
\end{equation}
More precise results for mixed moments may be available by combining the proofs
given in this section, but we do not pursue this issue in detail.

\subsection{Moments in $x+x^\prime$.} 

In this subsection we propagate moments of $\gamma$ in $x+x^\prime$, which is equivalent
to propagating moments of the distribution $f(x,v)$ in the spatial variable. The idea
of the proof is to write an equation for the $k$th moment and use the existence of
a solution $\gamma(t)$ in $H^{\alpha,\beta}$ for large enough $\beta$.

\begin{lemma}
\label{lemma:gkp-eq}
Consider a distributional solution $\gamma (t)$, $t \in [0,T]$, of
 the Boltzmann equation. Then for any
$k \in \mathbb{N}$, $k\geq 1$, there holds
\begin{equation}
\begin{aligned}
& \left( i \partial_t + \frac{1}{2} \left( \Delta_x - \Delta_{x^\prime}\right)\right)
\left( \left< x+x^\prime \right>^k \gamma (t) \right)
= B \left( \left< x+x^\prime \right>^k \gamma(t), \gamma(t) \right) + \\
& \qquad \qquad \qquad \qquad \qquad \qquad
+ k \frac{x+x^\prime}{\left< x+x^\prime \right>} \cdot
\left( \nabla_x - \nabla_{x^\prime}\right) \left(
\left< x+x^\prime \right>^{k-1} \gamma (t) \right).
\end{aligned}
\end{equation}
in the sense of distributions.
\end{lemma}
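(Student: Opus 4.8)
The plan is to derive the evolution equation for $\left< x+x^\prime \right>^k \gamma(t)$ directly from the equation satisfied by $\gamma(t)$, namely
\begin{equation*}
\left( i \partial_t + \tfrac{1}{2}(\Delta_x - \Delta_{x^\prime}) \right) \gamma(t) = B(\gamma(t),\gamma(t)),
\end{equation*}
by applying the multiplier $m(x,x^\prime) := \left< x+x^\prime \right>^k$ and tracking the commutator that arises. Since multiplication by $m$ commutes with $\partial_t$, the only nontrivial term is the commutator of $m$ with the dispersive operator $\tfrac{1}{2}(\Delta_x - \Delta_{x^\prime})$.

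The key computation is to expand
\begin{equation*}
(\Delta_x - \Delta_{x^\prime})(m\gamma) = m (\Delta_x - \Delta_{x^\prime})\gamma + \big[ (\Delta_x - \Delta_{x^\prime}) m \big] \gamma + 2 \big( \nabla_x m \cdot \nabla_x \gamma - \nabla_{x^\prime} m \cdot \nabla_{x^\prime} \gamma \big).
\end{equation*}
Because $m$ depends only on the combination $u := x+x^\prime$, we have $\nabla_x m = \nabla_{x^\prime} m = (\nabla_u m)(x+x^\prime)$ and $\Delta_x m = \Delta_{x^\prime} m$, so the term $\big[ (\Delta_x - \Delta_{x^\prime}) m \big]\gamma$ vanishes identically, and the gradient terms collapse to $2 (\nabla_u m)(x+x^\prime) \cdot (\nabla_x - \nabla_{x^\prime})\gamma$. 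One then computes $\nabla_u \left< u \right>^k = k \left< u \right>^{k-2} u = k \frac{u}{\left< u \right>} \left< u \right>^{k-1}$, and since $\left< x+x^\prime \right>^{k-1}$ also commutes with $\nabla_x - \nabla_{x^\prime}$, the gradient contribution can be rewritten as $k \frac{x+x^\prime}{\left< x+x^\prime \right>} \cdot (\nabla_x - \nabla_{x^\prime})\big( \left< x+x^\prime \right>^{k-1}\gamma \big)$, which is exactly the inhomogeneous term in the claimed identity. For the collision term, multiplying $B(\gamma,\gamma)$ by $\left< x+x^\prime \right>^k$ and using identities \eqref{eq:Mts-Bm-p} and \eqref{eq:Mts-Bp-p} from the preceding lemma (with $a = k$, $b = 0$) gives $B\big( \left< x+x^\prime \right>^k \gamma, \gamma \big)$, since $B = B^+ - B^-$ and both pieces obey the same rule.

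The only real subtlety is justifying all of this \emph{in the sense of distributions}, since $\gamma(t)$ is merely assumed to be a distributional solution: the manipulations above are formal until one checks that pairing against a test function $\phi \in C_c^\infty$ and moving $m$ onto $\phi$ is legitimate. I would handle this by noting $m \in C^\infty$ with all derivatives of polynomial growth, so $m\phi$ is again an admissible test function (against tempered distributions) and the Leibniz expansions above hold verbatim after transposition; the commutator identity is then just the product rule applied to $m\phi$ rather than to $\gamma$. I expect this bookkeeping to be the main obstacle only in the sense of being tedious — there is no analytic difficulty, since everything is an algebraic identity once the test-function formulation is set up, and the regularity of the multiplier $m = \left< x+x^\prime \right>^k$ and of $\left< x+x^\prime \right>^{k-1}$ is harmless.
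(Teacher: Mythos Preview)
Your proposal is correct and follows exactly the approach the paper takes: the paper's proof consists of the single sentence ``This computation follows by using (\ref{eq:Mts-Bm-p}) and (\ref{eq:Mts-Bp-p}),'' which is precisely the commutator computation and collision-term identity you have written out in detail. Your discussion of the distributional justification is a reasonable elaboration of a point the paper leaves implicit.
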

\begin{proof}
This computation follows by using  (\ref{eq:Mts-Bm-p}) and (\ref{eq:Mts-Bp-p}).
\end{proof}

Let us introduce the following convenient notation:
\begin{equation}
\zeta (t) = B \left( \gamma(t),\gamma(t) \right)
\end{equation}
\begin{equation}
\gamma_{k,+} (t,x,x^\prime) = \left< x+x^\prime \right>^k \gamma (t,x,x^\prime) 
\end{equation}
\begin{equation}
\zeta_{k,+} (t) = B \left( \gamma_{k,+} (t), \gamma (t) \right).
\end{equation}

\begin{proposition}
\label{prop:gkp-mts}
Let $\gamma (t)$ be a solution of Boltzmann's equation with bounded collision kernel,
$\left\Vert \mathbf{b} \right\Vert_\infty < \infty$, satisfying the following bounds
on some time interval $[0,T]$:
\begin{equation}
\label{eq:gammabd}
\left\Vert \gamma (t) \right\Vert_{L^\infty_T H^{\alpha,\beta}} < \infty
\end{equation}
\begin{equation}
\label{eq:zetabd}
\left\Vert B ( \gamma (t),\gamma (t) \right\Vert_{L^1_T H^{\alpha,\beta}} < \infty
\end{equation}
with $\alpha,\beta > \frac{d-1}{2}$. Further assume that, for some integer $K>0$ with
$K < \beta - \frac{d-1}{2}$, for all $1 \leq k \leq K$ there holds
\begin{equation}
\left\Vert \gamma_{k,+} (0) \right\Vert_{H^{\alpha,\beta-k}} < \infty
\end{equation}
Then for all $1\leq k \leq K$ we have
\begin{equation}
\left\Vert \gamma_{k,+} (t) \right\Vert_{L^\infty_T H^{\alpha,\beta-k}} < \infty.
\end{equation}
Moreover there is an explicit bound on
$\underset{1\leq k \leq K}{\sup}
\left\Vert \gamma_{k,+} (t) \right\Vert_{L^\infty_T H^{\alpha,\beta-k}}$ that only
depends on
$\left\Vert \gamma(t) \right\Vert_{L^\infty_T H^{\alpha,\beta}}$,
$\left\Vert B(\gamma(t),\gamma(t))\right\Vert_{L^1_T H^{\alpha,\beta}}$, and
$\underset{1\leq k \leq K}{\sup}
\left\Vert \gamma_{k,+} (0) \right\Vert_{H^{\alpha,\beta-k}}$.
\end{proposition}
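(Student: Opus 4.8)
The plan is to prove the result by induction on $k$, setting up for each fixed $k$ a new Duhamel-type fixed point problem for $\gamma_{k,+}(t)$ of exactly the form solved in \cite{CDP2017}, with $\beta$ replaced by $\beta - k$, but now with a forcing term built from quantities already controlled at the previous inductive step. First I would rewrite the equation of Lemma \ref{lemma:gkp-eq} in its Duhamel form:
\begin{equation}
\gamma_{k,+}(t) = e^{\frac{1}{2}it\Delta_\pm}\gamma_{k,+}(0)
- i\int_0^t e^{\frac{1}{2}i(t-t_1)\Delta_\pm}\Big( B(\gamma_{k,+}(t_1),\gamma(t_1)) + F_{k}(t_1)\Big)\,dt_1,
\end{equation}
where $F_k(t) = k\,\frac{x+x^\prime}{\langle x+x^\prime\rangle}\cdot(\nabla_x - \nabla_{x^\prime})\big(\langle x+x^\prime\rangle^{k-1}\gamma(t)\big)$. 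The key structural point is that $B(\gamma_{k,+},\gamma)$ is \emph{linear} in $\gamma_{k,+}$ (with $\gamma$ a fixed, known coefficient), so the fixed point map $\gamma_{k,+}\mapsto \text{RHS}$ is affine, not genuinely nonlinear; on a short enough time interval it is a contraction in $L^\infty_T H^{\alpha,\beta-k}$ by the same mechanism as Theorem \ref{thm:lwp}. Because $\beta - k > \frac{d-1}{2}$ by the hypothesis $K < \beta - \frac{d-1}{2}$, the index $\beta - k$ is still admissible for Proposition \ref{prop:KeyProp}.

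The estimates I would assemble are: (a) the homogeneous term is controlled by the hypothesis $\|\gamma_{k,+}(0)\|_{H^{\alpha,\beta-k}} < \infty$; (b) for the bilinear term, I apply Proposition \ref{prop:KeyProp} (with target index $\beta - k$ and the free evolution absorbed as usual via the $X^{s,b}$/$L^2_t$ machinery of \cite{CDP2017}) to get $\|B(\gamma_{k,+},\gamma)\|_{L^1_T H^{\alpha,\beta-k}} \lesssim T^{1/2}\|\gamma_{k,+}\|_{L^\infty_T H^{\alpha,\beta-k}}\|\gamma\|_{L^\infty_T H^{\alpha,\beta}}$, which gives the contraction factor for $T$ small depending on $\|\gamma\|_{L^\infty_T H^{\alpha,\beta}}$; (c) for the forcing term $F_k$, I note that $\frac{x+x^\prime}{\langle x+x^\prime\rangle}$ is a bounded multiplier that commutes harmlessly with the $H^{\alpha,\beta-k}$ structure (it affects only the $\langle\xi+\xi^\prime\rangle$-side trivially after commuting $\nabla_x - \nabla_{x^\prime}$ through), and the derivative $\nabla_x - \nabla_{x^\prime}$ lands on the $\xi - \xi^\prime$ weight, costing exactly one power there — so $\|F_k\|_{L^\infty_T H^{\alpha,\beta-k}} \lesssim \|\langle x+x^\prime\rangle^{k-1}\gamma\|_{L^\infty_T H^{\alpha,(\beta-k)+1}} = \|\gamma_{k-1,+}\|_{L^\infty_T H^{\alpha,\beta-(k-1)}}$, which is finite by the inductive hypothesis (for $k=1$ this is just $\|\gamma\|_{L^\infty_T H^{\alpha,\beta}}$). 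Integrating in time loses another $T$, so $F_k$ contributes an inhomogeneous term controlled by the previous level. Once $T$ is chosen small enough (depending only on $\|\gamma\|_{L^\infty_T H^{\alpha,\beta}}$, $\alpha$, $\beta$, $d$) to make the map a contraction, one gets a unique fixed point $\tilde\gamma_{k,+}\in L^\infty_T H^{\alpha,\beta-k}$, and then iterates across $[0,T]$ using that the smallness threshold for $T$ does not degrade (it depends only on the $H^{\alpha,\beta}$ norm of $\gamma$, which is already bounded on all of $[0,T]$), yielding the claimed bound on $\sup_{1\le k\le K}\|\gamma_{k,+}(t)\|_{L^\infty_T H^{\alpha,\beta-k}}$ with the stated dependence.

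The main obstacle — flagged already in the paper's footnote — is not any single estimate but the \emph{identification} step: one must show that the solution $\tilde\gamma_{k,+}$ of the new fixed point problem actually equals $\langle x+x^\prime\rangle^k\gamma(t)$, where $\gamma$ is the given solution of the original equation. Formally this is immediate since $\langle x+x^\prime\rangle^k\gamma$ solves the same Duhamel equation, but making it rigorous requires knowing a priori that $\langle x+x^\prime\rangle^k\gamma(t)$ lies in the space where uniqueness holds — which is essentially what we are trying to prove. The standard fix is a bootstrap/truncation argument: replace $\langle x+x^\prime\rangle^k$ by a bounded cutoff $\chi_R(x+x^\prime)\langle x+x^\prime\rangle^k$, run the (now trivially justified) argument with $R$-dependent but $R$-\emph{uniform} bounds, and let $R\to\infty$ by Fatou/weak-$*$ compactness in $L^\infty_T H^{\alpha,\beta-k}$. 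I would handle this via such a regularization, and — following the paper's stated convention — not dwell on the details, since the mechanism is routine; the substantive content is entirely in the contraction estimate powered by Proposition \ref{prop:KeyProp} at the shifted index $\beta - k$ together with the one-derivative-for-one-moment exchange visible in the forcing term $F_k$.
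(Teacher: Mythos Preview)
Your overall strategy---induction on $k$, Duhamel for $\gamma_{k,+}$, Proposition \ref{prop:KeyProp} at the shifted index $\beta-k$, and the one-moment-for-one-derivative trade in the forcing term $F_k$---is exactly the paper's. The identification issue you flag, and your truncation remedy, are also in line with the paper's footnote.

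However, your step (b) hides the one genuinely nontrivial maneuver. You write that Proposition \ref{prop:KeyProp} gives
\[
\|B(\gamma_{k,+},\gamma)\|_{L^1_T H^{\alpha,\beta-k}} \lesssim T^{1/2}\,\|\gamma_{k,+}\|_{L^\infty_T H^{\alpha,\beta-k}}\,\|\gamma\|_{L^\infty_T H^{\alpha,\beta}},
\]
but Proposition \ref{prop:KeyProp} only bounds $B$ applied to \emph{free evolutions} $e^{\frac{1}{2}it\Delta_\pm}\gamma_j$, not to arbitrary time-dependent arguments. The paper stresses this distinction explicitly (see Section \ref{sec:treg} and Appendix \ref{sec:AppA}): a direct fixed-time or $L^\infty_T$-to-$L^1_T$ bound on $B(\cdot,\cdot)$ is \emph{not} available from Proposition \ref{prop:KeyProp}, and the lossy substitute of Proposition \ref{prop:AppA-lossy} would cost regularity you cannot afford here. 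So the single-equation contraction you describe cannot be closed as written.

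The paper's fix is to introduce $\zeta_{k,+}=B(\gamma_{k,+},\gamma)$ as a second unknown and derive a separate Duhamel-type equation for it, obtained by writing the evolution of $\gamma_{k,+}\otimes\gamma$ and then applying the collision operator (the trick from \cite{PC2010}). This produces a \emph{coupled system} \eqref{eq:duhamel2}--\eqref{eq:duhamel3} in which every occurrence of $B$ acts on a pair of free evolutions, so Proposition \ref{prop:KeyProp} applies term by term. One then estimates the combined quantity $M_T = T^{1/2}\|\gamma_{k,+}\|_{L^\infty_T H^{\alpha,\beta-k}} + \|\zeta_{k,+}\|_{L^1_T H^{\alpha,\beta-k}}$ and closes for small $T$; note the smallness threshold depends on both $\|\gamma\|_{L^\infty_T H^{\alpha,\beta}}$ and $\|\zeta\|_{L^1_T H^{\alpha,\beta}}$ (the latter being $\mathcal O(T^{1/2})$), not only the former. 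Your phrase ``free evolution absorbed as usual'' gestures at this, but the coupled-system step is the substance of the argument and should be made explicit.
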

\begin{proof}
We will prove the result assuming $T$ is small. To prove the general result, it suffices
to split the whole time interval $[0,T]$ into small sub-intervals (whose size depends only
on the bounds (\ref{eq:gammabd}) and (\ref{eq:zetabd})) and iterate the same argument as
many times as needed.\footnote{Note in particular that
$\left\Vert B(\gamma(t),\gamma(t)) \right\Vert_{L^1_T H^{\alpha,\beta}}$ scales at worst like
$T^{\frac{1}{2}}$ for $T$ small under the hypotheses of the proposition, so this norm will
certainly be small if $T$ is chosen small. By a time translation argument the same control
holds on any small interval $[t_0,t_0+T]$ for as long as $\gamma(t)$ remains bounded
in $H^{\alpha,\beta}$.}

Let us denote by $\Delta_{\pm}^{(2)} = \sum_{i=1}^2 \left(
\Delta_{x_i} - \Delta_{x_i^\prime}\right)$ the Laplace operator acting on two particles.
Using Lemma \ref{lemma:gkp-eq}, we easily obtain:
\begin{equation}
\begin{aligned}
& \left( i \partial_t + \frac{1}{2} \Delta_{\pm}^{(2)} \right) \left(
\gamma_{k,+} \otimes \gamma \right) =
\gamma_{k,+} \otimes B(\gamma,\gamma) +
B(\gamma_{k,+},\gamma) \otimes \gamma + \\
& \qquad \qquad \qquad \qquad \qquad\qquad
+ \left( k \frac{x+x^\prime}{\left< x+x^\prime \right>} \cdot
\left( \nabla_x - \nabla_{x^\prime}\right) \gamma_{k-1,+} \right) \otimes \gamma
\end{aligned}
\end{equation}
In integral form, this is:
\begin{equation}
\label{eq:duhamel1}
\begin{aligned}
& \left( \gamma_{k,+} \otimes \gamma \right) (t) =
e^{\frac{1}{2} i t \Delta_{\pm}^{(2)}} \left( \gamma_{k,+} \otimes \gamma \right) (0) \\
& \qquad \quad - i \int_0^t e^{\frac{1}{2} i(t-t_1) \Delta_{\pm}^{(2)}} 
\left( \gamma_{k,+} \otimes B(\gamma,\gamma) \right)(t_1) dt_1 \\
& \qquad \quad
- i \int_0^t e^{\frac{1}{2} i(t-t_1) \Delta_{\pm}^{(2)}} \left( B (\gamma_{k,+},\gamma)
\otimes \gamma\right) (t_1) dt_1 \\
& \qquad \quad - i \int_0^t e^{\frac{1}{2} i(t-t_1) \Delta_{\pm}^{(2)}}
\left\{ \left( k \frac{x+x^\prime}{\left< x+x^\prime \right>} \cdot
\left( \nabla_x - \nabla_{x^\prime} \right) \gamma_{k-1,+} \right) \otimes \gamma 
\right\} (t_1) dt_1.
\end{aligned} 
\end{equation}
The key step is to apply the collision integral to each side of (\ref{eq:duhamel1})
to obtain a (nearly) closed equation for $\zeta_{k,+}$; this idea is adapted from
\cite{PC2010}. To fully close the system we need to incorporate the equation for
$\gamma_{k,+} (t)$ which comes directly by re-writing Lemma \ref{lemma:gkp-eq} in
integral form. Altogether we need to solve the following \emph{system} of equations,
where $\Delta_{\pm} = \Delta_x - \Delta_{x^\prime}$:
\begin{equation}
\label{eq:duhamel2}
\begin{aligned}
& \gamma_{k,+} (t) = e^{\frac{1}{2} i t \Delta_{\pm}} \gamma_{k,+} (0) 
- i \int_0^t e^{\frac{1}{2} i (t-t_1) \Delta_{\pm}} \zeta_{k,+} (t_1) dt_1 \\
& \qquad \qquad \qquad - i \int_0^t e^{\frac{1}{2} i(t-t_1) \Delta_{\pm}}
\left( k \frac{x+x^\prime}{\left< x+x^\prime \right>} \cdot
\left( \nabla_{x} - \nabla_{x^\prime} \right) \gamma_{k-1,+} (t_1) \right) dt_1
\end{aligned}
\end{equation}
\begin{equation}
\label{eq:duhamel3}
\begin{aligned}
& \zeta_{k,+} (t) =
B \left( e^{\frac{1}{2} i t \Delta_{\pm}} \gamma_{k,+} (0), 
e^{\frac{1}{2} i t \Delta_{\pm}} \gamma(0) \right) \\
& - i \int_0^t B \left( e^{\frac{1}{2} i (t-t_1) \Delta_{\pm}} \gamma_{k,+} (t_1),
e^{\frac{1}{2} i(t-t_1) \Delta_{\pm}} \zeta(t_1) \right) dt_1 \\
& - i \int_0^t B \left( e^{\frac{1}{2} i(t-t_1) \Delta_{\pm}} \zeta_{k,+} (t_1), 
e^{\frac{1}{2} i(t-t_1) \Delta_{\pm}} \gamma (t_1) \right) dt_1 \\
& - i \int_0^t B \left(
\begin{aligned}
& e^{\frac{1}{2} i(t-t_1) \Delta_{\pm}} \left\{
k \frac{x+x^\prime}{\left< x+x^\prime \right>} \cdot 
\left( \nabla_x - \nabla_{x^\prime}\right) \gamma_{k-1,+} (t_1)\right\},\\
& \qquad \qquad \qquad \qquad \qquad \qquad \qquad \qquad \qquad
 e^{\frac{1}{2} i(t-t_1) \Delta_{\pm}} \gamma(t_1)
\end{aligned}
\right) dt_1.
\end{aligned}
\end{equation}
We can solve \eqref{eq:duhamel2}-\eqref{eq:duhamel3} on $[0,T]$ for sufficiently small
$T$ by applying a Picard iteration using
Proposition \ref{prop:KeyProp}; we omit the details. In any case the only fact to
be drawn from the fixed point iteration is that the moments
$\gamma_{k,+}$ do not instantaneously diverge in $H^{\alpha,\beta-k}$, so the 
\emph{quantitative} estimates we prove next are justified.

First, using the fact that the propagator $e^{\frac{1}{2} i t \Delta_{\pm}}$ preserves the
spaces $H^{\alpha,\beta}$, we easily obtain from (\ref{eq:duhamel2}) the following
bound:
\begin{equation}
\label{eq:bd1}
\begin{aligned}
& \left\Vert \gamma_{k,+} \right\Vert_{L^\infty_T H^{\alpha,\beta-k}} \leq \\
& \qquad \leq \left\Vert \gamma_{k,+} (0) \right\Vert_{H^{\alpha,\beta-k}}
+ \left\Vert \zeta_{k,+} \right\Vert_{L^1_T H^{\alpha,\beta-k}}
+ C_{\alpha,\beta} T 
\left\Vert \gamma_{k-1,+} \right\Vert_{L^\infty_T H^{\alpha,\beta-(k-1)}}.
\end{aligned}
\end{equation}
For the second estimate, we take the $L^1_T H^{\alpha,\beta-k}$ norm on both sides
of (\ref{eq:duhamel3}). We obtain:
\begin{equation}
\begin{aligned}
& \left\Vert \zeta_{k,+} (t) \right\Vert_{L^1_T H^{\alpha,\beta-k}} \leq
\left\Vert 
B \left( e^{\frac{1}{2} i t \Delta_{\pm}} \gamma_{k,+} (0), 
e^{\frac{1}{2} i t \Delta_{\pm}} \gamma(0) \right) 
\right\Vert_{L^1_T H^{\alpha,\beta-k}} \\
& + \int_0^T \int_0^t \left\Vert 
B \left( e^{\frac{1}{2} i (t-t_1) \Delta_{\pm}} \gamma_{k,+} (t_1),
e^{\frac{1}{2} i(t-t_1) \Delta_{\pm}} \zeta(t_1) \right)
\right\Vert_{H^{\alpha,\beta-k}} dt_1 dt \\
& + \int_0^T \int_0^t \left\Vert
B \left( e^{\frac{1}{2} i(t-t_1) \Delta_{\pm}} \zeta_{k,+} (t_1), 
e^{\frac{1}{2} i(t-t_1) \Delta_{\pm}} \gamma (t_1) \right) 
\right\Vert_{H^{\alpha,\beta-k}} dt_1 dt\\
& + \int_0^T \int_0^t dt_1 dt \times \\
& \qquad \times  \left\Vert B \left(
\begin{aligned}
& e^{\frac{1}{2} i(t-t_1) \Delta_{\pm}} \left\{
k \frac{x+x^\prime}{\left< x+x^\prime \right>} \cdot 
\left( \nabla_x - \nabla_{x^\prime}\right) \gamma_{k-1,+} (t_1)\right\},\\
& \qquad \qquad \qquad \qquad \qquad \qquad \qquad \quad
 e^{\frac{1}{2} i(t-t_1) \Delta_{\pm}} \gamma(t_1)
\end{aligned}
\right) \right\Vert_{H^{\alpha,\beta-k}}.
\end{aligned}
\end{equation}
Now we bound  $\int_0^t dt_1 (\dots) $ by $\int_0^T dt_1 (\dots) $ and apply Fubini.
\begin{equation}
\begin{aligned}
\begin{aligned}
& \left\Vert \zeta_{k,+} (t) \right\Vert_{L^1_T H^{\alpha,\beta-k}} \leq
\left\Vert 
B \left( e^{\frac{1}{2} i t \Delta_{\pm}} \gamma_{k,+} (0),
 e^{\frac{1}{2} i t \Delta_{\pm}} \gamma(0) \right) 
\right\Vert_{L^1_T H^{\alpha,\beta-k}} \\
& + \int_0^T \left\Vert 
B \left( e^{\frac{1}{2} i (t-t_1) \Delta_{\pm}} \gamma_{k,+} (t_1),
e^{\frac{1}{2} i(t-t_1) \Delta_{\pm}} \zeta(t_1) \right)
\right\Vert_{L^1_T H^{\alpha,\beta-k}} dt_1 \\
& + \int_0^T  \left\Vert
B \left( e^{\frac{1}{2} i(t-t_1) \Delta_{\pm}} \zeta_{k,+} (t_1), 
e^{\frac{1}{2} i(t-t_1) \Delta_{\pm}} \gamma (t_1) \right) 
\right\Vert_{L^1_T H^{\alpha,\beta-k}} dt_1\\
& + \int_0^T dt_1  \times \\
& \qquad \times  \left\Vert B \left(
\begin{aligned}
& e^{\frac{1}{2} i(t-t_1) \Delta_{\pm}} \left\{
k \frac{x+x^\prime}{\left< x+x^\prime \right>} \cdot 
\left( \nabla_x - \nabla_{x^\prime}\right) \gamma_{k-1,+} (t_1)\right\},\\
& \qquad \qquad \qquad \qquad \qquad \qquad \qquad 
 e^{\frac{1}{2} i(t-t_1) \Delta_{\pm}} \gamma(t_1)
\end{aligned}
\right) \right\Vert_{L^1_T H^{\alpha,\beta-k}}.
\end{aligned}
\end{aligned}
\end{equation}
Finally we apply Cauchy-Schwarz to bound $\left\Vert \dots \right\Vert_{L^1_T}$ by
$T^{\frac{1}{2}} \left\Vert \dots \right\Vert_{L^2_T}$; then, we are able to apply
Proposition \ref{prop:KeyProp} to deduce the following bound:
\begin{equation}
\begin{aligned}
& \left\Vert \zeta_{k,+} \right\Vert_{L^1_T H^{\alpha,\beta-k}} \leq
C T^{\frac{1}{2}} \left\Vert \gamma_{k,+} (0) \right\Vert_{H^{\alpha,\beta-k}}
\left\Vert \gamma(0) \right\Vert_{H^{\alpha,\beta-k}} + \\
& +
C T^{\frac{1}{2}} \left\Vert \gamma_{k,+} \right\Vert_{L^\infty_T H^{\alpha,\beta-k}}
\left\Vert \zeta \right\Vert_{L^1_T H^{\alpha,\beta-k}} +
C T^{\frac{1}{2}} \left\Vert \zeta_{k,+} \right\Vert_{L^1_T H^{\alpha,\beta-k}}
\left\Vert \gamma \right\Vert_{L^\infty_T H^{\alpha,\beta-k}} + \\
& + C_{\alpha,\beta} k T^{\frac{3}{2}} 
\left\Vert \gamma_{k-1,+} \right\Vert_{L^\infty_T H^{\alpha,\beta-(k-1)}}
\left\Vert \gamma \right\Vert_{L^\infty_T H^{\alpha,\beta-k}}.
\end{aligned}
\end{equation}
Since $H^{\alpha,\beta} \subset H^{\alpha,\beta-k}$, this implies:
\begin{equation}
\label{eq:bd2}
\begin{aligned}
& \left\Vert \zeta_{k,+} \right\Vert_{L^1_T H^{\alpha,\beta-k}} \leq
C T^{\frac{1}{2}} \left\Vert \gamma_{k,+} (0) \right\Vert_{H^{\alpha,\beta-k}}
\left\Vert \gamma(0) \right\Vert_{H^{\alpha,\beta}} + \\
& +
C T^{\frac{1}{2}} \left\Vert \gamma_{k,+} \right\Vert_{L^\infty_T H^{\alpha,\beta-k}}
\left\Vert \zeta \right\Vert_{L^1_T H^{\alpha,\beta}} +
C T^{\frac{1}{2}} \left\Vert \zeta_{k,+} \right\Vert_{L^1_T H^{\alpha,\beta-k}}
\left\Vert \gamma \right\Vert_{L^\infty_T H^{\alpha,\beta}} + \\
& + C_{\alpha,\beta} k T^{\frac{3}{2}} 
\left\Vert \gamma_{k-1,+} \right\Vert_{L^\infty_T H^{\alpha,\beta-(k-1)}}
\left\Vert \gamma \right\Vert_{L^\infty_T H^{\alpha,\beta}}.
\end{aligned}
\end{equation}

Let us define
\begin{equation}
M_T = T^{\frac{1}{2}} \left\Vert \gamma_{k,+} \right\Vert_{L^\infty_T H^{\alpha,\beta-k}}
+ \left\Vert \zeta_{k,+} \right\Vert_{L^1_T H^{\alpha,\beta}}.
\end{equation}
Then combining (\ref{eq:bd1}) and (\ref{eq:bd2}), we obtain:
\begin{equation}
\begin{aligned}
& M_T \leq C \left( T^{\frac{1}{2}} + T^{\frac{1}{2}} \left\Vert \gamma 
\right\Vert_{L^\infty_T H^{\alpha,\beta}} +
\left\Vert \zeta \right\Vert_{L^1_T H^{\alpha,\beta}} \right) M_T + \\
& \qquad \qquad + T^{\frac{1}{2}} \left\Vert \gamma_{k,+} (0) \right\Vert_{H^{\alpha,\beta-k}}
+ T^{\frac{1}{2}} \left\Vert \gamma_{k,+} (0) \right\Vert_{H^{\alpha,\beta-k}}
\left\Vert \gamma(0) \right\Vert_{H^{\alpha,\beta}} + \\
& \qquad \qquad + C_{\alpha,\beta} T^{\frac{3}{2}}
\left\Vert \gamma_{k-1,+} \right\Vert_{L^\infty_T H^{\alpha,\beta-(k-1)}} + \\
& \qquad \qquad  + C_{\alpha,\beta} k T^{\frac{3}{2}}
\left\Vert \gamma_{k-1,+} \right\Vert_{L^\infty_T H^{\alpha,\beta-(k-1)}}
\left\Vert \gamma \right\Vert_{L^\infty_T H^{\alpha,\beta}}.
\end{aligned}
\end{equation}
Since $\left\Vert \zeta \right\Vert_{L^1_T H^{\alpha,\beta}}$ is
$\mathcal{O} (T^{\frac{1}{2}})$ for small $T$, we find that the prefactor of $M_T$ on
the right-hand side is small if $T$ is small. The smallness of $T$ depends only on
the underlying solution $\gamma (t)$ of Boltzmann's equation.
\end{proof}

\subsection{Moments in $x-x^\prime$.} In this subsection we propagate moments of $\gamma$
in $x-x^\prime$, which is equivalent to propagating derivatives of the distribution
$f(x,v)$ in the velocity variable. As in the previous subsection, we will write an
equation for the $(2k)$th moment of $\gamma$ and use the existence of a solution 
$\gamma(t)$ in $H^{\alpha,\beta}$ for large enough $\alpha$. However, as we will see,
the proof is much more technical both because (\ref{eq:Mts-Bp-m}) introduces many
new terms and because we can only close the estimate for moments of \emph{even} order.

\begin{lemma}
\label{lemma:gkm-eq}
Consider a distributional solution $\gamma (t)$, $t\in [0,T]$, of the Boltzmann equation.
Then for any $k\in \mathbb{N}$, $k\geq 1$, there holds
\begin{equation}
\label{eq:gkm-1}
\begin{aligned}
& \left( i \partial_t + \frac{1}{2} \left( \Delta_x - \Delta_{x^\prime} \right) \right)
\left( \left< x-x^\prime \right>^{2k} \gamma(t) \right) =\\
& = 
B \left( \left< x-x^\prime \right>^{2k} \gamma(t), \gamma(t) \right) 
+ B^+ \left( \gamma (t), \left< x-x^\prime \right>^{2k} \gamma(t) \right) + \\
& + \sum_{\substack{ j_1+j_2+j_3 = k \\
j_1 \neq k \\ j_2 \neq k}} \binom{k}{j_1,j_2,j_3} (-1)^{j_3} 
B^+ \left( \left< x-x^\prime \right>^{2j_1} \gamma(t),
\left< x-x^\prime \right>^{2j_2} \gamma(t) \right) +\\
& + 2k \left( \nabla_x + \nabla_{x^\prime} \right) \cdot
\left( (x-x^\prime) \left< x-x^\prime \right>^{2k-2} \gamma(t) \right)
\end{aligned}
\end{equation}
Additionally, for any $k\in \mathbb{N}$, $k\geq 1$, there holds
\begin{equation}
\label{eq:gkm-2}
\begin{aligned}
& \left( i \partial_t + \frac{1}{2} \left( \Delta_x - \Delta_{x^\prime} \right)\right)
\left( (x-x^\prime) \left< x-x^\prime \right>^{2k-2} \gamma(t) \right)
= \\
& = B \left( (x-x^\prime) \left< x-x^\prime \right>^{2k-2} \gamma(t),\gamma(t) \right) + \\
& + B^+ \left( 
\gamma (t), (x-x^\prime) \left< x-x^\prime \right>^{2k-2} \gamma(t) \right) + \\
& + \sum_{\substack{ j_1+j_2+j_3=k-1 \\ j_1 \neq k-1}} \binom{k-1}{j_1,j_2,j_3}
(-1)^{j_3} B^+  \left( 
\begin{aligned}
& (x-x^\prime) \left< x-x^\prime \right>^{2j_1} \gamma (t), \\
&\qquad \qquad \qquad  \left< x-x^\prime \right>^{2j_2} \gamma (t)
\end{aligned}
 \right) + \\
& + \sum_{\substack{j_1 + j_2 + j_3 = k-1 \\ j_2 \neq k-1}}
\binom{k-1}{j_1,j_2,j_3} (-1)^{j_3}
B^+ \left(
\begin{aligned}
& \left< x-x^\prime \right>^{2j_1} \gamma (t), \\
& \qquad  (x-x^\prime) \left< x-x^\prime \right>^{2j_2} \gamma (t) 
\end{aligned}
\right) + \\
& + \left( \nabla_x + \nabla_{x^\prime} \right) \left(
\left< x-x^\prime \right>^{2k-2} \gamma (t) \right) + \\
& + (2k-2)
\left( \frac{x-x^\prime}{\left<x-x^\prime\right>} \cdot \left( 
\nabla_x + \nabla_{x^\prime} \right) \right)
\left( (x-x^\prime) \left< x-x^\prime \right>^{2k-3} \gamma(t) \right)
\end{aligned}
\end{equation}
\end{lemma}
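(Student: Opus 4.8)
The plan is to verify both identities \eqref{eq:gkm-1} and \eqref{eq:gkm-2} by a direct computation, applying the product rule for $i\partial_t + \tfrac12(\Delta_x - \Delta_{x'})$ together with the already-established commutation relations \eqref{eq:Mts-Bm-m}, \eqref{eq:Mts-Bp-m}, and \eqref{eq:Mts-Bp-m-2}. First I would note that $\gamma(t)$ solves $(i\partial_t + \tfrac12(\Delta_x - \Delta_{x'}))\gamma = B(\gamma,\gamma)$ in the sense of distributions, and that multiplication by a fixed weight $\langle x-x'\rangle^{2k}$ commutes with $i\partial_t$ but \emph{not} with the Laplacian difference. So the core of the computation is the commutator of $\tfrac12(\Delta_x - \Delta_{x'})$ with multiplication by $w := \langle x-x'\rangle^{2k}$ (and with $(x-x')w'$ where $w' := \langle x-x'\rangle^{2k-2}$ in the second identity).

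For the operator $\tfrac12(\Delta_x - \Delta_{x'})$, observe that it can be written in terms of the variables $s = x+x'$ and $\sigma = x - x'$ as $2\nabla_s\cdot\nabla_\sigma$; since $w$ depends only on $\sigma$, the commutator $[2\nabla_s\cdot\nabla_\sigma, w]$ acting on $\gamma$ reduces to $2(\nabla_\sigma w)\cdot\nabla_s\gamma$, i.e. a first-order operator in $\nabla_s = \nabla_x + \nabla_{x'}$ with coefficient $\nabla_\sigma w$. Computing $\nabla_\sigma \langle\sigma\rangle^{2k} = 2k\,\sigma\langle\sigma\rangle^{2k-2}$ produces exactly the last line of \eqref{eq:gkm-1} once one regroups $2(\nabla_s w)\cdot\nabla_s$ as a single divergence $2k(\nabla_x+\nabla_{x'})\cdot\bigl((x-x')\langle x-x'\rangle^{2k-2}(\cdot)\bigr)$ — the rewriting as a divergence is legitimate because the extra term from differentiating the coefficient is symmetric. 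The same mechanism, applied to the weight $\sigma\langle\sigma\rangle^{2k-2}$, yields the last two lines of \eqref{eq:gkm-2}: one gets $(\nabla_x+\nabla_{x'})(\langle x-x'\rangle^{2k-2}\gamma)$ from differentiating the $\sigma$ factor and the $(2k-2)$-term from differentiating the $\langle\sigma\rangle^{2k-2}$ factor.

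It remains to handle $w\cdot B(\gamma,\gamma)$. Here I would split $B = B^+ - B^-$ and apply the lemma: by \eqref{eq:Mts-Bm-m}, $\langle x-x'\rangle^{2k} B^-(\gamma,\gamma) = B^-(\langle x-x'\rangle^{2k}\gamma, \gamma)$ (choosing $b=0$ in that identity, absorbing all the weight onto the first argument), which is the $B^-$ part of $B(\langle x-x'\rangle^{2k}\gamma,\gamma)$. For the gain term, \eqref{eq:Mts-Bp-m} gives a multinomial sum over $j_1+j_2+j_3 = k$; isolating the terms $j_1 = k$ (which gives $B^+(\langle x-x'\rangle^{2k}\gamma,\gamma)$) and $j_2 = k$ (which gives $B^+(\gamma,\langle x-x'\rangle^{2k}\gamma)$) from the remaining terms with $j_1,j_2 \neq k$ produces precisely the first three lines of \eqref{eq:gkm-1}. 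Identity \eqref{eq:gkm-2} is analogous but uses \eqref{eq:Mts-Bp-m-2} and the $(k-1)$-version of \eqref{eq:Mts-Bp-m} applied to $(x-x')\langle x-x'\rangle^{2k-2}\gamma = (x-x')\gamma \cdot \langle x-x'\rangle^{2(k-1)}$, which accounts for the two separate multinomial sums (one with the $(x-x')$ on the first slot, one on the second) and their respective exclusions $j_1 \neq k-1$, $j_2 \neq k-1$. The main obstacle is purely bookkeeping: one must be careful that the projection $P_\omega(x-x')$ appearing inside $B^+$ does not spoil the weight manipulation, but this is exactly the point addressed in the proof of the preceding lemma via the orthogonal decomposition $\langle x-x'\rangle^2 = \langle(\mathbb{I}-P_\omega)(x-x')\rangle^2 + \langle P_\omega(x-x')\rangle^2 - 1$, so \eqref{eq:Mts-Bp-m} and \eqref{eq:Mts-Bp-m-2} may be invoked as black boxes and no further geometric argument is needed.
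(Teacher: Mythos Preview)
Your proposal is correct and follows essentially the same approach as the paper, which simply cites the commutation identities \eqref{eq:Mts-Bm-m}--\eqref{eq:Mts-Bp-m-2} without further detail; your write-up supplies the commutator computation and the term-by-term bookkeeping that the paper leaves implicit. One small slip: where you write ``$2(\nabla_s w)\cdot\nabla_s$'' you mean $2(\nabla_\sigma w)\cdot\nabla_s$, and the divergence rewriting works not because of symmetry but because $(\nabla_x+\nabla_{x'})$ annihilates any function of $x-x'$.
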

\begin{proof} 
This computation follows by using  \eqref{eq:Mts-Bm-m}-\eqref{eq:Mts-Bp-m-2}.
\end{proof}
\begin{remark}
Note carefully that $(x-x^\prime) \left< x-x^\prime\right>^{2k-2} \gamma (t)$
is a complex vector field.
\end{remark}

Let us introduce the following notation:
\begin{equation}
\zeta (t) = B (\gamma(t),\gamma (t))
\end{equation}
\begin{equation}
\gamma_{k,-} (t,x,x^\prime) = \left< x-x^\prime\right>^k \gamma (t,x,x^\prime)
\end{equation}
\begin{equation}
\zeta_{k,-} (t) = B \left( \gamma_{k,-} (t),\gamma(t)\right) +
B^+ \left( \gamma(t), \gamma_{k,-} (t) \right)
\end{equation}
\begin{equation}
\breve{\gamma}_{k,-} = (x-x^\prime) \left< x-x^\prime\right>^{k-1} \gamma(t,x,x^\prime)
\end{equation}
\begin{equation}
\breve{\zeta}_{k,-} (t) = B \left( \breve{\gamma}_{k,-} (t), \gamma(t) \right) +
B^+ \left( \gamma(t), \breve{\gamma}_{k,-} (t) \right)
\end{equation}

\begin{proposition}
Let $\gamma (t)$ be a solution of Boltzmann's equation with \textbf{constant} collision
kernel, $\mathbf{b} \equiv \textnormal{cst.}$, satisfying the following bounds on some
time interval $[0,T]$:
\begin{equation}
\left\Vert \gamma(t) \right\Vert_{L^\infty_T H^{\alpha,\beta}} < \infty
\end{equation}
\begin{equation}
\left\Vert B(\gamma(t),\gamma(t) \right\Vert_{L^1_T H^{\alpha,\beta}} < \infty
\end{equation}
with $\alpha,\beta > \frac{d-1}{2}$.
Further assume that, for some integer $K > 0$ with
$2K < \alpha - \frac{d-1}{2}$, for all $1 \leq k \leq K$ there holds
\begin{equation}
\left\Vert \gamma_{2k,-} (0) \right\Vert_{H^{\alpha-2k,\beta}} < \infty
\end{equation}
Then for all $1 \leq k \leq K$ we have
\begin{equation}
\left\Vert \gamma_{2k,-} (t) \right\Vert_{L^\infty_T H^{\alpha-2k,\beta}} < \infty.
\end{equation}
Moreover there is an explicit bound on $\underset{1\leq k \leq K}{\sup}
\left\Vert \gamma_{2k,-} (t) \right\Vert_{L^\infty_T H^{\alpha-2k,\beta}}$ that only
depends on $\left\Vert \gamma(t) \right\Vert_{L^\infty_T H^{\alpha,\beta}}$,
$\left\Vert B(\gamma(t),\gamma(t)) \right\Vert_{L^1_T H^{\alpha,\beta}}$, and
$\underset{1\leq k \leq K}{\sup} \left\Vert
\gamma_{2k,-} (0) \right\Vert_{H^{\alpha-2k,\beta}}$.
\end{proposition}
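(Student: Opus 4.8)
The plan is to follow the proof of Proposition~\ref{prop:gkp-mts} essentially verbatim, the only genuinely new feature being the larger system of equations produced by Lemma~\ref{lemma:gkm-eq}. We induct on $k$, and at stage $k$ we regard every object of weight at most $2k-2$ — that is, $\gamma_{2j,-}$ for $j<k$ together with the auxiliary vector fields $\breve{\gamma}_{m,-}$ for odd $m\le 2k-3$ — as already controlled (in $L^\infty_T H^{\alpha-2j,\beta}$ and $L^\infty_T H^{\alpha-m,\beta}$ respectively) by the induction hypothesis, so that these enter the equations for $\gamma_{2k,-}$ and $\breve{\gamma}_{2k-1,-}$ only as forcing. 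The crucial structural observation is that the coupling in Lemma~\ref{lemma:gkm-eq} is triangular in the weight: equation~\eqref{eq:gkm-2} expresses $\bigl(i\partial_t+\tfrac12\Delta_\pm\bigr)\breve{\gamma}_{2k-1,-}$ through the collision images of $\breve{\gamma}_{2k-1,-}$ itself plus objects of strictly smaller weight, while equation~\eqref{eq:gkm-1} expresses $\bigl(i\partial_t+\tfrac12\Delta_\pm\bigr)\gamma_{2k,-}$ through the collision images of $\gamma_{2k,-}$ itself, the $B^+$-multinomials of lower weight, and the divergence term $2k(\nabla_x+\nabla_{x^\prime})\cdot\breve{\gamma}_{2k-1,-}$. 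Hence one first solves for $\breve{\gamma}_{2k-1,-}$ and then for $\gamma_{2k,-}$.

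For each of these two sub-problems I would run the collision-image argument of \cite{PC2010} exactly as in the proof of Proposition~\ref{prop:gkp-mts}: tensor the relevant evolution equation with $\gamma$, pass to the two-particle Duhamel formula, apply the collision operator $B$ to both sides to obtain a nearly closed Duhamel system for the pair $(\gamma_{2k,-},\zeta_{2k,-})$ — where $\zeta_{2k,-}=B(\gamma_{2k,-},\gamma)+B^+(\gamma,\gamma_{2k,-})$ — and similarly for $(\breve{\gamma}_{2k-1,-},\breve{\zeta}_{2k-1,-})$, completing the system by adjoining the integral form of Lemma~\ref{lemma:gkm-eq} itself. A Picard iteration based on Proposition~\ref{prop:KeyProp} shows that none of these quantities diverges instantaneously; as in Proposition~\ref{prop:gkp-mts} this is all that we extract from the fixed-point step, the quantitative bounds being proved separately. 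Throughout, Proposition~\ref{prop:KeyProp} is applied with $\delta=0$, in the spaces $H^{\alpha-2k,\beta}$ for the $\gamma_{2k,-}$ pair and $H^{\alpha-2k+1,\beta}$ for the vector-field pair; the hypothesis $2K<\alpha-\tfrac{d-1}{2}$ guarantees $\alpha-2k>\tfrac{d-1}{2}$, and hence also $\alpha-2k+1>\tfrac{d-1}{2}$, so both regularity indices stay inside the range where Proposition~\ref{prop:KeyProp} is valid. The gain term $B^+(\gamma,\,\cdot\,)$ in the second slot causes no trouble since the estimate~\eqref{eq:gain-est} is symmetric in its two source indices; the $B^+$-multinomials, the divergence remainder, and the remainders involving $\frac{x-x^\prime}{\langle x-x^\prime\rangle}\cdot(\nabla_x+\nabla_{x^\prime})$ are all estimated in the target space by inductively controlled lower-weight norms, and they contribute forcing terms carrying a positive power of $T$.

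The quantitative step then mirrors \eqref{eq:bd1}--\eqref{eq:bd2}: take the $L^\infty_T H^{\alpha-2k,\beta}$ norm of the Duhamel formula for $\gamma_{2k,-}$ and the $L^1_T H^{\alpha-2k,\beta}$ norm of the Duhamel formula for $\zeta_{2k,-}$ (and likewise in $H^{\alpha-2k+1,\beta}$ for the vector-field pair), use Cauchy--Schwarz in time to gain a factor $T^{1/2}$ from each $\int_0^t$, invoke Proposition~\ref{prop:KeyProp}, and set $M_T=T^{1/2}\|\gamma_{2k,-}\|_{L^\infty_T H^{\alpha-2k,\beta}}+\|\zeta_{2k,-}\|_{L^1_T H^{\alpha-2k,\beta}}$ together with the analogous quantity for $(\breve{\gamma}_{2k-1,-},\breve{\zeta}_{2k-1,-})$. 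Because $\|B(\gamma,\gamma)\|_{L^1_T H^{\alpha,\beta}}=\mathcal{O}(T^{1/2})$ and $\|\gamma\|_{L^\infty_T H^{\alpha,\beta}}$ is finite, the coefficient of $M_T$ on the right-hand side is strictly less than $1$ for $T$ small (with smallness depending only on the solution bounds), so $M_T$ is absorbed; what remains is controlled by $\|\gamma_{2k,-}(0)\|_{H^{\alpha-2k,\beta}}$ and the inductively controlled lower-weight norms, yielding the asserted explicit bound. Splitting $[0,T]$ into finitely many short subintervals and iterating gives the statement on the full interval. I expect the main obstacle to be purely organizational: one must carry along the auxiliary odd-weight vector fields $\breve{\gamma}_{2k-1,-}$ — forced on us because the transport remainder in \eqref{eq:gkm-1} is in divergence form and because $\langle x-x^\prime\rangle^{2k+1}$ admits no clean commutation identity with $B^+$ — and verify that the weight-triangular system closes at every level without ever requiring an odd-order moment of the scalar $\gamma$, which is precisely the reason only even moments propagate.
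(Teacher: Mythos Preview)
Your plan is essentially the paper's own proof: induct on $k$, carry the auxiliary odd-weight vector fields $\breve{\gamma}_{2k-1,-}$, solve the triangular system first for the odd-weight pair then for the even-weight pair, and close via the collision-image Duhamel argument and Proposition~\ref{prop:KeyProp}. The order of operations and the function spaces you identify all match the paper.

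There is one genuine subtlety you gloss over. You state that the $B^+$-multinomials $B^+(\gamma_{2j_1,-},\gamma_{2j_2,-})$ with $j_1,j_2<k$ are ``estimated in the target space by inductively controlled lower-weight norms,'' and your inductive hypothesis is phrased only as $\gamma_{2j,-}\in L^\infty_T H^{\alpha-2j,\beta}$ and $\breve{\gamma}_{m,-}\in L^\infty_T H^{\alpha-m,\beta}$. But $B^+$ is \emph{not} bounded from $H^{\alpha',\beta}\times H^{\alpha',\beta}$ to $H^{\alpha',\beta}$ (Proposition~\ref{prop:AppA-lossy} only gives a lossy version, and Proposition~\ref{prop:KeyProp} requires the free-propagator structure), so $L^\infty_T$ control of the inputs alone does not yield $L^1_T$ control of the multinomials. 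The paper handles this by making the inductive hypothesis stronger: it assumes directly that the relevant multinomials lie in $L^1_T H^{\alpha-2k,\beta}$ (their (\ref{eq:Assume1})--(\ref{eq:Assume3})), and then \emph{verifies} these for the next step by using the $L^1_T$ bounds on $\zeta_{2k,-}$ and $\breve{\zeta}_{2k-1,-}$ obtained at stage $k$ --- substituting the Duhamel representations of $\gamma_{2j_i,-}$ into $B^+(\cdot,\cdot)$ and applying Proposition~\ref{prop:KeyProp} termwise. You should either adopt this formulation of the induction hypothesis, or explicitly strengthen yours to include $\zeta_{2j,-}\in L^1_T H^{\alpha-2j,\beta}$ and $\breve{\zeta}_{2j-1,-}\in L^1_T H^{\alpha-2j+1,\beta}$, and spell out the Duhamel-substitution step that turns these into the multinomial bounds. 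The paper flags this point explicitly: ``it is not enough to prove that $\gamma_{2k,-}\in L^\infty_T H^{\alpha-2k,\beta}$ \dots because we do not have continuity bounds for the operator $B^+$ itself.''
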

\begin{proof}
We will prove the result assuming $T$ is small. To prove the general result, it suffices
to split the whole time interval $[0,T]$ into small sub-intervals and iterate the
argument, as in Proposition \ref{prop:gkp-mts}.

Similar to Proposition \ref{prop:gkp-mts}, the main idea is to write a closed equation for
the system $\left\{ \gamma_{2k,-}, \zeta_{2k,-}, \breve{\gamma}_{2k-1,-},
\breve{\zeta}_{2k-1,-} \right\}$. Since the computations are quite involved, we only write
down the main steps. We will \emph{assume} for the induction that, if
$j_1,j_2 \leq k-1$ and $j_1 + j_2 \leq k$, then
\begin{equation}
\label{eq:Assume1}
 B^+ \left( \gamma_{2j_1, -},\gamma_{2j_2,-} \right) \in
L^1_T H^{\alpha-2k,\beta},
\end{equation}
and that if $j_1 \leq k-2$ and $j_1+j_2 \leq k-1$, then
\begin{equation}
\label{eq:Assume2}
B^+ \left( \breve{\gamma}_{2j_1+1,-},\gamma_{2j_2,-} \right) \in
L^1_T H^{\alpha-2k+1,\beta},
\end{equation}
and that if $j_2 \leq k-2$ and $j_1 + j_2 \leq k-1$, then
\begin{equation}
\label{eq:Assume3}
B^+ \left( \gamma_{2j_1,-},\breve{\gamma}_{2j_2+1,-} \right) \in
L^1_T H^{\alpha-2k+1,\beta}.
\end{equation}
These assumptions will, of course, have to be verified later (the case $k=1$ is
easily checked using the facts that $\gamma \in L^\infty_T H^{\alpha,\beta}$
and $\zeta \in L^1_T H^{\alpha,\beta}$).

We observe first that (\ref{eq:gkm-1}) is equivalent to the following system of
equations (this system is \emph{not} closed due to the presence of 
$\breve{\gamma}_{2k-1,-}$, which is not given to us by the inductive hypothesis):
\begin{equation}
\begin{aligned}
& \gamma_{2k,-} (t) = e^{\frac{1}{2} i t \Delta_{\pm}} \gamma_{2k,-} (0)
- i \int_0^t e^{\frac{1}{2} i (t-t_1) \Delta_{\pm}} \zeta_{2k,-} (t_1) dt_1 \\
& - i \sum_{\substack{j_1+j_2+j_3=k \\ j_1 \neq k \\ j_2 \neq k}}
\binom{k}{j_1,j_2,j_3} (-1)^{j_3} \int_0^t
e^{\frac{1}{2} i (t-t_1) \Delta_{\pm}} B^+ \left(
\gamma_{2j_1,-} (t_1), \gamma_{2j_2,-} (t_1) \right) dt_1 \\
& - 2 k i \int_0^t e^{\frac{1}{2} i (t-t_1) \Delta_{\pm}}
\left( \left( \nabla_x + \nabla_{x^\prime}\right) \cdot \breve{\gamma}_{2k-1,-} (t_1)
\right) dt_1.
\end{aligned}
\end{equation}

Also using a Duhamel expression for  $\left( \gamma_{2k,-} \otimes \gamma \right) (t)$, 
which can be obtained in a similar way as  \eqref{eq:duhamel1}, we obtain: 
\begin{equation}
\begin{aligned}
& \zeta_{2k,-} (t) =
B \left( e^{\frac{1}{2} i t \Delta_{\pm}} \gamma_{2k,-} (0),
e^{\frac{1}{2} i t \Delta_{\pm}} \gamma(0) \right)
+ B^+ \left( e^{\frac{1}{2} i t \Delta_{\pm}} \gamma(0),
e^{\frac{1}{2} i t \Delta_{\pm}} \gamma_{2k,-} (0) \right) \\
& - i \int_0^t B \left(
e^{\frac{1}{2} i (t-t_1) \Delta_{\pm}} \gamma_{2k,-} (t_1),
e^{\frac{1}{2} i(t-t_1)\Delta_{\pm}} \zeta(t_1)\right) dt_1 \\
& - i \int_0^t B^+ \left(
e^{\frac{1}{2} i (t-t_1) \Delta_{\pm}} \zeta(t_1),
e^{\frac{1}{2} i(t-t_1)\Delta_{\pm}} \gamma_{2k,-} (t_1) \right) dt_1\\
& - i \int_0^t
B\left( e^{\frac{1}{2} i(t-t_1)\Delta_{\pm}} \zeta_{2k,-} (t_1),
e^{\frac{1}{2} i(t-t_1)\Delta_{\pm}} \gamma(t_1) \right) dt_1\\
& - i \int_0^t B^+ \left( e^{\frac{1}{2} i(t-t_1)\Delta_{\pm}} \gamma(t_1),
e^{\frac{1}{2} i(t-t_1)\Delta_{\pm}} \zeta_{2k,-} (t_1) \right) dt_1 \\
& - i \sum_{\substack{j_1+j_2+j_3=k \\ j_1 \neq k \\ j_2 \neq k}}
\binom{k}{j_1,j_2,j_3} (-1)^{j_3} \times \\
& \qquad \times \int_0^t B\left( 
e^{\frac{1}{2} i (t-t_1) \Delta_{\pm}} B^+ \left( \gamma_{2j_1,-}(t_1),\gamma_{2j_2,-}(t_1)\right),
e^{\frac{1}{2} i (t-t_1) \Delta_{\pm}} \gamma(t_1) \right) dt_1 \\
& - i \sum_{\substack{j_1+j_2+j_3 = k \\ j_1 \neq k \\ j_2 \neq k}}
\binom{k}{j_1,j_2,j_3} (-1)^{j_3} \times \\
& \qquad \times \int_0^t B^+ \left(
e^{\frac{1}{2} i(t-t_1)\Delta_{\pm}} \gamma(t_1),
e^{\frac{1}{2} i(t-t_1)\Delta_{\pm}} B^+ \left(
\gamma_{2j_1,-}(t_1),\gamma_{2j_2,-}(t_1)\right) \right) dt_1 \\
& - 2 k i \int_0^t B \left( e^{\frac{1}{2} i(t-t_1)\Delta_{\pm}} \left(
\left( \nabla_x + \nabla_{x^\prime}\right) \cdot \breve{\gamma}_{2k-1,-} (t_1) \right),
e^{\frac{1}{2} i (t-t_1) \Delta_{\pm}} \gamma (t_1) \right) dt_1 \\
& - 2 k i \int_0^t
B^+ \left( e^{\frac{1}{2} i(t-t_1)\Delta_{\pm}} \gamma(t_1),
e^{\frac{1}{2} i(t-t_1)\Delta_{\pm}} \left( \left( \nabla_x + \nabla_{x^\prime}\right)
\cdot \breve{\gamma}_{2k-1,-}(t_1)\right)\right) dt_.1
\end{aligned}
\end{equation}
Arguing as in Proposition \ref{prop:gkp-mts}, and applying 
Proposition \ref{prop:KeyProp}, we deduce the following estimates:
\begin{equation}
\label{eq:bd10}
\begin{aligned}
& \left\Vert \gamma_{2k,-} \right\Vert_{L^\infty_T H^{\alpha-2k,\beta}} \leq
\left\Vert \gamma_{2k,-} (0) \right\Vert_{H^{\alpha-2k,\beta}} +
\left\Vert \zeta_{2k,-} \right\Vert_{L^1_T H^{\alpha-2k,\beta}} + \\
& \qquad \qquad + C_k \sup_{\substack{j_1+j_2 \leq k \\ j_1 \neq k \\ j_2 \neq k}}
\left\Vert B^+ \left( \gamma_{2j_1,-} , \gamma_{2j_2,-}\right)
\right\Vert_{L^1_T H^{\alpha-2k,\beta}}  + \\
& \qquad \qquad
+ 2 k T \left\Vert \breve{\gamma}_{2k-1,-} \right\Vert_{L^\infty_T H^{\alpha-(2k-1),\beta}},
\end{aligned}
\end{equation}
\begin{equation}
\label{eq:bd11}
\begin{aligned}
& \left\Vert \zeta_{2k,-} \right\Vert_{L^1_T H^{\alpha-2k,\beta}} \leq
C T^{\frac{1}{2}} \left\Vert \gamma_{2k,-} (0) \right\Vert_{H^{\alpha-2k,\beta}}
\left\Vert \gamma \right\Vert_{L^\infty_T H^{\alpha,\beta}} +\\
& \qquad \qquad
+ C T^{\frac{1}{2}} \left\Vert \gamma_{2k,-} \right\Vert_{L^\infty_T H^{\alpha-2k,\beta}}
\left\Vert \zeta \right\Vert_{L^1_T H^{\alpha,\beta}} + \\
& \qquad \qquad
+ C T^{\frac{1}{2}} \left\Vert \zeta_{2k,-} \right\Vert_{L^1_T H^{\alpha-2k,\beta}}
\left\Vert \gamma \right\Vert_{L^\infty_T H^{\alpha,\beta}} + \\
& \qquad \qquad
+ C_k T^{\frac{1}{2}} \left\Vert \gamma \right\Vert_{L^\infty_T H^{\alpha,\beta}}
\sup_{\substack{j_1+j_2 \leq k \\ j_1 \neq k \\ j_2 \neq k}} 
\left\Vert B^+ \left( \gamma_{2j_1,-} , \gamma_{2j_2,-} \right)
\right\Vert_{L^1_T H^{\alpha-2k,\beta}} + \\
& \qquad \qquad
+ C k T^{\frac{3}{2}} \left\Vert \gamma \right\Vert_{L^\infty_T H^{\alpha,\beta}}
\left\Vert \breve{\gamma}_{2k-1,-} \right\Vert_{L^\infty_T H^{\alpha-(2k-1),\beta}}.
\end{aligned}
\end{equation}
Observe that \eqref{eq:bd10}-\eqref{eq:bd11} does \emph{not} yield a closed estimate
in terms of $\gamma_{j,-}$ (with $j\leq 2k-2$) precisely because of the terms involving
$\breve{\gamma}_{2k-1,-}$. This is why we have to solve (\ref{eq:gkm-2})
simultaneously with (\ref{eq:gkm-1}).

Obviously the equation (\ref{eq:gkm-2}) yields a system of equations for the pair
$\left\{ \breve{\gamma}_{2k-1,-}, \breve{\zeta}_{2k-1,-}\right\}$, but this system is
very cumbersome to write down. Instead, we will simply note the resulting estimates:
\begin{equation}
\label{eq:bd20}
\begin{aligned}
& \left\Vert \breve{\gamma}_{2k-1,-} \right\Vert_{L^\infty_T H^{\alpha-(2k-1),\beta}} 
\leq \left\Vert \breve{\gamma}_{2k-1,-} (0) \right\Vert_{H^{\alpha-(2k-1),\beta}} + \\
&  \qquad \qquad
+ \left\Vert \breve{\zeta}_{2k-1,-} \right\Vert_{L^1_T H^{\alpha-(2k-1),\beta}} + \\
& \qquad \qquad + C_k \sup_{\substack{j_1+j_2 \leq k-1 \\ j_1 \neq k-1}}
\left\Vert B^+ \left( \breve{\gamma}_{2j_1+1,-}, \gamma_{2j_2,-} \right)
\right\Vert_{L^1_T H^{\alpha-(2k-1),\beta}} + \\
& \qquad \qquad + C_k \sup_{\substack{j_1+j_2 \leq k-1 \\ j_2 \neq k-1}}
\left\Vert B^+ \left( \gamma_{2j_1,-},\breve{\gamma}_{2j_2+1,-} \right)
\right\Vert_{L^1_T H^{\alpha-(2k-1),\beta}} + \\
& \qquad \qquad
+ C T \left\Vert \gamma_{2k-2,-} \right\Vert_{L^\infty_T H^{\alpha-(2k-2),\beta}} + \\
& \qquad \qquad + C_{\alpha,\beta} (2k-2) T \left\Vert
\breve{\gamma}_{2k-2,-} \right\Vert_{L^\infty_T H^{\alpha-(2k-2),\beta}}.
\end{aligned}
\end{equation}
\begin{equation}
\label{eq:bd21}
\begin{aligned}
& \left\Vert \breve{\zeta}_{2k-1,-} \right\Vert_{L^1_T H^{\alpha-(2k-1),\beta}}
\leq C T^{\frac{1}{2}} \left\Vert \breve{\gamma}_{2k-1,-} (0) 
\right\Vert_{H^{\alpha-(2k-1),\beta}}
\left\Vert \gamma \right\Vert_{L^\infty_T H^{\alpha,\beta}} + \\
& \qquad \quad + C T^{\frac{1}{2}}
\left\Vert \breve{\gamma}_{2k-1,-} \right\Vert_{L^\infty_T H^{\alpha-(2k-1),\beta}}
\left\Vert \zeta \right\Vert_{L^1_T H^{\alpha,\beta}} + \\
& \qquad \quad + C T^{\frac{1}{2}}
\left\Vert \breve{\zeta}_{2k-1,-} \right\Vert_{L^1_T H^{\alpha-(2k-1),\beta}}
\left\Vert \gamma \right\Vert_{L^\infty_T H^{\alpha,\beta}} + \\
& \qquad \quad
+ C_k T^{\frac{1}{2}} \left\Vert \gamma \right\Vert_{L^\infty_T H^{\alpha,\beta}}
\sup_{\substack{j_1+j_2 \leq k-1 \\ j_1 \neq k-1}}
\left\Vert B^+ \left( \breve{\gamma}_{2j_1+1,-}, \gamma_{2j_2,-} \right)
\right\Vert_{L^1_T H^{\alpha-(2k-1),\beta}} + \\
& \qquad \quad
+ C_k T^{\frac{1}{2}} \left\Vert \gamma \right\Vert_{L^\infty_T H^{\alpha,\beta}}
\sup_{\substack{j_1+j_2 \leq k-1 \\ j_2 \neq k-1}}
\left\Vert B^+ \left( \gamma_{2j_1,-},\breve{\gamma}_{2j_2+1,-}
\right) \right\Vert_{L^1_T H^{\alpha-(2k-1),\beta}} + \\
& \qquad \quad
+ C T^{\frac{3}{2}} \left\Vert \gamma \right\Vert_{L^\infty_T H^{\alpha,\beta}}
\left\Vert \gamma_{2k-2,-} \right\Vert_{L^\infty_T H^{\alpha-(2k-2),\beta}} + \\
& \qquad \quad + C_{\alpha,\beta} (2k-2) T^{\frac{3}{2}}
\left\Vert \gamma \right\Vert_{L^\infty_T H^{\alpha,\beta}}
\left\Vert \breve{\gamma}_{2k-2,-} 
\right\Vert_{L^\infty_T H^{\alpha-(2k-2),\beta}}.
\end{aligned}
\end{equation}
Combining \eqref{eq:bd20}-\eqref{eq:bd21} and using \eqref{eq:Assume2}-\eqref{eq:Assume3},
we can conclude the for sufficiently small $T$ depending only on 
$\left\Vert \gamma \right\Vert_{L^\infty_T H^{\alpha,\beta}}$ and
$\left\Vert B(\gamma,\gamma) \right\Vert_{L^1_T H^{\alpha,\beta}}$, we have
\begin{equation}
\label{eq:bd30}
\breve{\gamma}_{2k-1,-} \in L^\infty_T H^{\alpha-(2k-1),\beta}
\end{equation}
\begin{equation}
\label{eq:bd31}
\breve{\zeta}_{2k-1,-} \in L^1_T H^{\alpha-(2k-1),\beta}.
\end{equation}
Now we can combine \eqref{eq:bd10}-\eqref{eq:bd11} with \eqref{eq:bd30} and
\eqref{eq:Assume1} to conclude that
\begin{equation}
\label{eq:bd40}
\gamma_{2k,-} \in L^\infty_T H^{\alpha-2k,\beta}
\end{equation}
\begin{equation}
\label{eq:bd41}
\zeta_{2k,-} \in L^1_T H^{\alpha-2k,\beta}.
\end{equation}

Finally, we must verify the assumptions (\ref{eq:Assume1}-\ref{eq:Assume3}) which were
used in the inductive process. The point here is that it is not enough to prove
that $\gamma_{2k,-} \in L^\infty_T H^{\alpha-2k,\beta}$ and
$\breve{\gamma}_{2k-1,-} \in L^\infty_T H^{\alpha-(2k-1),\beta}$, because we do not
have continuity bounds for the operator $B^+$ itself. Rather it is essential to use the
facts that $\zeta_{2k,-} \in L^1_T H^{\alpha-2k,\beta}$ and
$\breve{\zeta}_{2k-1,-} \in L^1_T H^{\alpha-(2k-1),\beta}$; fortunately, these bounds are
provided to us by the induction itself, so we may conclude.
\end{proof}

\section{Persistence of Regularity}
\label{sec:pers}

Now the question is as follows: suppose we have a solution $\gamma(t)$ in
$H^{\alpha,\beta}$ on a maximal time interval $[0,T)$, and suppose further
that $\gamma (0) \in H^{\alpha_1,\beta_1}$ for some $\alpha_1 \geq \alpha$
and $\beta_1 \geq \beta$. Then there is a maximal solution $\gamma_1(t)$ in
$H^{\alpha_1,\beta_1}$ which exists on a time interval $[0,T_1)$ with
$\gamma_1 (0) = \gamma (0)$. Clearly
$\gamma_1$ coincides with $\gamma$ on $[0,T_1)$, and in particular
$T_1 \leq T$. Can we say that $T_1 = T$? The results in this section answer this question in
the affirmative when $\mathbf{b}$ is bounded
 and $(\alpha_1 - \alpha) $ is an integer.

\begin{proposition}
Let $\gamma (t)$ be a solution of Boltzmann's equation with
$\left\Vert \mathbf{b} \right\Vert_{\infty} < \infty$, and suppose
$\gamma \in L^\infty_T H^{\alpha,\beta}$ and
$B(\gamma,\gamma) \in L^1_T H^{\alpha,\beta}$ for some
$\alpha,\beta > \frac{d-1}{2}$, and further suppose that
$\gamma (0) \in H^{\alpha+r,\beta}$ for some $r \in \mathbb{N}$.
Then $\gamma \in L^\infty_T H^{\alpha+r,\beta}$
and $B(\gamma,\gamma) \in L^1_T H^{\alpha+r,\beta}$.
\end{proposition}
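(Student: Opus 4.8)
I will prove the statement for $r=1$ and obtain general $r\in\mathbb{N}$ by iteration. The index $\alpha$ is governed by the vector field $\nabla_x+\nabla_{x^\prime}$, which commutes with the free propagator $e^{\frac12 it\Delta_{\pm}}$ and, by Lemma~\ref{lemma:deriv-commute}, obeys the Leibniz rule through $B^{+}$ and $B^{-}$; hence raising $\alpha$ to $\alpha+1$ amounts to placing $g:=(\nabla_x+\nabla_{x^\prime})\gamma$ in $L^\infty_T H^{\alpha,\beta}$ and $\eta:=(\nabla_x+\nabla_{x^\prime})B(\gamma,\gamma)=B(g,\gamma)+B(\gamma,g)$ in $L^1_T H^{\alpha,\beta}$. (The integrality of $r$ enters here: unlike the $\beta$--index, the $\alpha$--index has no room to spare in Proposition~\ref{prop:KeyProp}, so this exact commutator identity is the only tool available.) Applying $\nabla_x+\nabla_{x^\prime}$ to the Duhamel formula~\eqref{eq:gamma-boltz} gives
\begin{equation*}
g(t)=e^{\frac12 it\Delta_{\pm}}g(0)-i\int_0^t e^{\frac12 i(t-t_1)\Delta_{\pm}}\eta(t_1)\,dt_1 ,
\end{equation*}
which is not closed in $g$, and in any case cannot be estimated directly since we have no continuity bound for $B$ itself. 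Following the device used in Proposition~\ref{prop:gkp-mts} (and originally in \cite{PC2010}), the remedy is to form the two--body objects $g\otimes\gamma$ and $\gamma\otimes g$, write their two--body Duhamel formulas (with inhomogeneities $\eta\otimes\gamma+g\otimes\zeta$ and $\zeta\otimes g+\gamma\otimes\eta$ respectively, $\zeta:=B(\gamma,\gamma)$), and apply the collision integral to each side. Since contraction intertwines the two--body and one--body free evolutions, this yields the closed system consisting of the above equation for $g$ together with
\begin{align*}
\eta(t)&=B\!\left(e^{\frac12 it\Delta_{\pm}}g(0),e^{\frac12 it\Delta_{\pm}}\gamma(0)\right)+B\!\left(e^{\frac12 it\Delta_{\pm}}\gamma(0),e^{\frac12 it\Delta_{\pm}}g(0)\right)\\
&\quad-i\int_0^t\Big[B\!\left(e^{\frac12 i(t-t_1)\Delta_{\pm}}\eta,e^{\frac12 i(t-t_1)\Delta_{\pm}}\gamma\right)+B\!\left(e^{\frac12 i(t-t_1)\Delta_{\pm}}g,e^{\frac12 i(t-t_1)\Delta_{\pm}}\zeta\right)\Big](t_1)\,dt_1\\
&\quad-i\int_0^t\Big[B\!\left(e^{\frac12 i(t-t_1)\Delta_{\pm}}\zeta,e^{\frac12 i(t-t_1)\Delta_{\pm}}g\right)+B\!\left(e^{\frac12 i(t-t_1)\Delta_{\pm}}\gamma,e^{\frac12 i(t-t_1)\Delta_{\pm}}\eta\right)\Big](t_1)\,dt_1 .
\end{align*}

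Next I would run the energy estimate exactly as in Proposition~\ref{prop:gkp-mts}: take the $L^\infty_T H^{\alpha,\beta}$ norm of the $g$--equation and the $L^1_T H^{\alpha,\beta}$ norm of the $\eta$--equation, bound $\int_0^t$ by $\int_0^T$, apply Fubini and Cauchy--Schwarz in the time variables, and invoke Proposition~\ref{prop:KeyProp} with $\delta=0$ (using \emph{both} the gain and the loss estimates, since $B=B^{+}-B^{-}$). This yields, on a short interval,
\begin{align*}
\left\Vert g\right\Vert_{L^\infty_T H^{\alpha,\beta}}&\leq\left\Vert g(0)\right\Vert_{H^{\alpha,\beta}}+\left\Vert \eta\right\Vert_{L^1_T H^{\alpha,\beta}},\\
\left\Vert \eta\right\Vert_{L^1_T H^{\alpha,\beta}}&\leq CT^{\frac12}\left\Vert g(0)\right\Vert_{H^{\alpha,\beta}}\left\Vert \gamma(0)\right\Vert_{H^{\alpha,\beta}}+CT^{\frac12}\big(\left\Vert \gamma\right\Vert_{L^\infty_T H^{\alpha,\beta}}+\left\Vert \zeta\right\Vert_{L^1_T H^{\alpha,\beta}}\big)\big(\left\Vert g\right\Vert_{L^\infty_T H^{\alpha,\beta}}+\left\Vert \eta\right\Vert_{L^1_T H^{\alpha,\beta}}\big).
\end{align*}
Because $\left\Vert\gamma\right\Vert_{L^\infty_T H^{\alpha,\beta}}$ and $\left\Vert B(\gamma,\gamma)\right\Vert_{L^1_T H^{\alpha,\beta}}$ are finite, the coefficient of $\left\Vert g\right\Vert_{L^\infty_T}+\left\Vert\eta\right\Vert_{L^1_T}$ is $<\tfrac12$ once the interval length is small depending only on those two quantities; absorbing that term and chaining finitely many subintervals of that fixed length across $[0,T]$ gives $g\in L^\infty_T H^{\alpha,\beta}$ and $\eta\in L^1_T H^{\alpha,\beta}$, i.e.\ $\gamma\in L^\infty_T H^{\alpha+1,\beta}$ and $B(\gamma,\gamma)\in L^1_T H^{\alpha+1,\beta}$. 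General $r$ follows by applying this with base regularity $\alpha,\alpha+1,\dots,\alpha+r-1$ in turn, the hypotheses being available at each stage because $\gamma(0)\in H^{\alpha+r,\beta}$ and the previous stage supplies the needed control in the intermediate space.

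The one genuinely delicate point -- already flagged in the introduction -- is that the Picard iteration for this system produces \emph{some} pair $(g,\eta)$, and one must check that it actually equals $\big((\nabla_x+\nabla_{x^\prime})\gamma,(\nabla_x+\nabla_{x^\prime})B(\gamma,\gamma)\big)$ for the given solution $\gamma$; only then are the \emph{a priori} bounds above meaningful. I expect this to be the main obstacle, and would handle it by mollification exactly as in the proof of Theorem~\ref{thm:nonneg}: approximate $\gamma(0)$ by Schwartz functions $\gamma_0^{(n)}\to\gamma(0)$ in $H^{\alpha+1,\beta}$, invoke Theorem~\ref{thm:lwp} to obtain classical solutions $\gamma^{(n)}$ on a common subinterval (uniform because $\left\Vert\gamma_0^{(n)}\right\Vert_{H^{\alpha,\beta}}$ is bounded), for which $(\nabla_x+\nabla_{x^\prime})\gamma^{(n)}$ genuinely solves the linear system with data bounded by $\left\Vert\gamma_0^{(n)}\right\Vert_{H^{\alpha+1,\beta}}$ uniformly in $n$, and then pass to the limit using continuity of the solution map (Proposition~\ref{prop:cont-0}) together with weak--$*$ compactness in $L^\infty_T H^{\alpha,\beta}$ and $L^1_T H^{\alpha,\beta}$. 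The resulting quantitative bounds may grow very rapidly across the chained subintervals, but stay finite on all of $[0,T]$.
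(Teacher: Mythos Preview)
Your proof is correct and follows essentially the same approach as the paper: both use Lemma~\ref{lemma:deriv-commute} to commute $\nabla_x+\nabla_{x^\prime}$ through $B$, set up the coupled Duhamel system for the derivative and $\zeta_{\mathbf{k}}=B(\partial_*^{\mathbf{k}}\gamma,\gamma)+B(\gamma,\partial_*^{\mathbf{k}}\gamma)$, estimate with Proposition~\ref{prop:KeyProp}, close on a short subinterval, and chain across $[0,T]$. The only organizational difference is in the passage to general $r$: you prove the case $r=1$ and then reapply it with the base regularity raised to $\alpha+1,\alpha+2,\dots$, which sidesteps Leibniz cross terms entirely, whereas the paper keeps the base regularity fixed at $\alpha$ and inducts over multi-indices $\mathbf{k}$, carrying the cross terms $B(\partial_*^{\mathbf{c}}\gamma,\partial_*^{\mathbf{k}-\mathbf{c}}\gamma)$ as inductive data; your route is slightly cleaner, while the paper's has the minor quantitative advantage that the short-time parameter at every step of the induction depends only on the fixed $H^{\alpha,\beta}$ norms rather than on the intermediate $H^{\alpha+k,\beta}$ norms. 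Your treatment of the justification issue is also more explicit than the paper's, which simply notes that LWP in $H^{\alpha+r,\beta}$ combined with uniqueness in $H^{\alpha,\beta}$ makes the formal computations rigorous.
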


\begin{proof}
To begin, notice that we have local well-posedness in $H^{\alpha+r,\beta}$,
and $\gamma(0) \in H^{\alpha+r,\beta}$; this suffices to justify our formal
computations. All we have to show is that $\gamma (t)$ remains bounded in
$H^{\alpha+r,\beta}$ on the full time interval $[0,T]$. By an iteration in
time, it suffices to prove the result for $T$ small enough depending only
on $\left\Vert \gamma \right\Vert_{L^\infty_T H^{\alpha,\beta}}$ and
$\left\Vert B(\gamma,\gamma) \right\Vert_{L^1_T H^{\alpha,\beta}}$.

The proof follows by a simple induction using Lemma \ref{lemma:deriv-commute},
combined with Proposition \ref{prop:KeyProp} and the fact that
$\left( \nabla_x + \nabla_{x^\prime}\right)$ commutes with the free
propagator $e^{i t \Delta_{\pm}}$. For notational convenience, we denote by
$\partial_*^{\mathbf{k}}$ a multi-index of the following form:
\begin{equation}
\left( \partial_{x_1} + \partial_{x_1^\prime} \right)^{k_1} 
\left( \partial_{x_2} + \partial_{x_2^\prime} \right)^{k_2} 
\dots \;
\left( \partial_{x_d} + \partial_{x_d^\prime} \right)^{k_d}
\end{equation}
with $k_1+k_2+\dots+k_d = |\mathbf{k}|$. Differentiating Boltzmann's equation and
using Lemma \ref{lemma:deriv-commute}, we have
\begin{equation}
\label{eq:multi-deriv}
\begin{aligned}
& \left( i \partial_t + \frac{1}{2} \left( \Delta_x - \Delta_{x^\prime} \right) \right)
\left( \partial_*^{\mathbf{k}} \gamma \right) =
B \left( \partial_*^{\mathbf{k}} \gamma, \gamma \right) +
B \left( \gamma, \partial_*^{\mathbf{k}} \gamma \right) + \\
& \qquad \qquad \qquad \qquad \qquad \qquad \qquad
+ \sum_{\substack{\mathbf{c} \leq \mathbf{k} \\ \mathbf{c} \neq 0 \\ 
\mathbf{c} \neq \mathbf{k}}}
\frac{\mathbf{k} !}{\mathbf{c}! (\mathbf{k}-\mathbf{c})!}
B \left( \partial_*^{\mathbf{c}} \gamma, \partial_*^{\mathbf{k}-\mathbf{c}} \gamma \right)
\end{aligned}
\end{equation}
where $\mathbf{k}! = k_1! k_2! \dots k_d!$. We assume for the induction that if
$\mathbf{c} \leq \mathbf{k}$, $\mathbf{c} \neq 0$, and $\mathbf{c} \neq \mathbf{k}$,
 then
\begin{equation}
\label{eq:bd55}
B \left( \partial_*^{\mathbf{c}} \gamma, \partial_*^{\mathbf{k}-\mathbf{c}}\gamma \right)
\in L^1_T H^{\alpha,\beta}
\end{equation}
This assertion will eventually be justified by the induction (note that the
base case $|\mathbf{k}| = 1$ is trivial).

Let us define
\begin{equation}
\zeta_{\mathbf{k}} (t) = 
B \left( \partial_*^\mathbf{k} \gamma (t), \gamma (t)\right) +
B \left( \gamma (t), \partial_*^{\mathbf{k}} \gamma (t) \right)
\end{equation}
Then (\ref{eq:multi-deriv}) is equivalent to the following system of equations:
\begin{equation}
\begin{aligned}
& \partial_*^{\mathbf{k}} \gamma (t) =
e^{\frac{1}{2} i t \Delta_{\pm}} \partial_*^{\mathbf{k}} \gamma (0) - i \int_0^T
e^{\frac{1}{2} i (t-t_1) \Delta_{\pm}} \zeta_{\mathbf{k}} (t_1) dt_1 \\
& \qquad \qquad 
- i \sum_{\substack{\mathbf{c} \leq \mathbf{k} \\ \mathbf{c} \neq 0 \\
\mathbf{c} \neq \mathbf{k}}} \frac{\mathbf{k}!}{\mathbf{c}! (\mathbf{k}-\mathbf{c})!}
\int_0^T e^{\frac{1}{2} i (t-t_1) \Delta_{\pm}} B \left( 
\partial_*^{\mathbf{c}} \gamma (t_1),
\partial_*^{\mathbf{k}-\mathbf{c}} \gamma (t_1) \right) dt_1
\end{aligned}
\end{equation}
\begin{equation}
\begin{aligned}
& \zeta_{\mathbf{k}} (t) = \\
& = B \left( e^{\frac{1}{2} i t \Delta_{\pm}} \gamma(0),
e^{\frac{1}{2} i t \Delta_{\pm}} \partial_*^{\mathbf{k}}
\gamma (0) \right)
+  B \left( e^{\frac{1}{2} i (t-t_1) \Delta_{\pm}} \partial_*^{\mathbf{k}}
\gamma(0),e^{\frac{1}{2} i (t-t_1) \Delta_{\pm}} \gamma(0) \right) \\
& - i \int_0^t B\left( e^{\frac{1}{2} i (t-t_1) \Delta_{\pm}} \zeta(t_1),
e^{\frac{1}{2} i (t-t_1) \Delta_{\pm}} \partial_*^{\mathbf{k}} \gamma(t_1) \right) dt_1 \\
& - i \int_0^t B\left( e^{\frac{1}{2} i (t-t_1) \Delta_{\pm}} \partial_*^{\mathbf{k}} \gamma (t_1),
e^{\frac{1}{2} i (t-t_1) \Delta_{\pm}} \zeta (t_1) \right) dt_1 \\
& - i \int_0^t B\left( e^{\frac{1}{2} i (t-t_1) \Delta_{\pm}} \gamma (t_1),
e^{\frac{1}{2} i (t-t_1) \Delta_{\pm}} \zeta_{\mathbf{k}} (t_1) \right) dt_1 \\
& - i \int_0^t B\left( e^{\frac{1}{2} i (t-t_1) \Delta_{\pm}} \zeta_{\mathbf{k}} (t_1),
e^{\frac{1}{2} i (t-t_1) \Delta_{\pm}} \gamma (t_1) \right) dt_1 \\
& - i \sum_{\substack{\mathbf{c}\leq \mathbf{k} \\
\mathbf{c} \neq 0 \\ \mathbf{c} \neq \mathbf{k}}}
\frac{\mathbf{k}!}{\mathbf{c}! (\mathbf{k}-\mathbf{c})!} \times  \\
& \qquad \qquad \times
\int_0^t B\left( e^{\frac{1}{2} i (t-t_1) \Delta_{\pm}} \gamma (t_1),
e^{\frac{1}{2} i (t-t_1) \Delta_{\pm}} 
B\left( \partial_*^{\mathbf{c}} \gamma(t_1),
\partial_*^{\mathbf{k}-\mathbf{c}} \gamma(t_1) \right) \right) dt_1 \\
& - i \sum_{\substack{\mathbf{c}\leq \mathbf{k} \\
\mathbf{c} \neq 0 \\ \mathbf{c} \neq \mathbf{k}}}
\frac{\mathbf{k}!}{\mathbf{c}! (\mathbf{k}-\mathbf{c})!} \times  \\
& \qquad \qquad \times
\int_0^t B\left( e^{\frac{1}{2} i (t-t_1) \Delta_{\pm}} 
B\left( \partial_*^{\mathbf{c}} \gamma(t_1),
\partial_*^{\mathbf{k}-\mathbf{c}} \gamma(t_1) \right),
e^{\frac{1}{2} i (t-t_1) \Delta_{\pm}} \gamma (t_1)
 \right) dt_1. \\
\end{aligned}
\end{equation}
We deduce the following estimates using 
Proposition \ref{prop:KeyProp}:
\begin{equation}
\label{eq:bd50}
\begin{aligned}
& \left\Vert \partial_*^{\mathbf{k}} \gamma \right\Vert_{L^\infty_T H^{\alpha,\beta}}
\leq \left\Vert \partial_*^{\mathbf{k}} \gamma(0) \right\Vert_{H^{\alpha,\beta}}
+ \left\Vert \zeta_{\mathbf{k}} \right\Vert_{L^1_T H^{\alpha,\beta}}  + \\
& \qquad \qquad \qquad \qquad \qquad \qquad
+ C_{\mathbf{k}} \sup_{\substack{\mathbf{c}\leq \mathbf{k} \\ \mathbf{c} \neq 0 \\
\mathbf{c} \neq \mathbf{k}}}
\left\Vert B \left( \partial_*^{\mathbf{c}} \gamma,
\partial_*^{\mathbf{k}-\mathbf{c}} \gamma \right) \right\Vert_{L^1_T H^{\alpha,\beta}}
\end{aligned}
\end{equation}
\begin{equation}
\label{eq:bd51}
\begin{aligned}
& \left\Vert \zeta_{\mathbf{k}} \right\Vert_{L^1_T H^{\alpha,\beta}} \leq
C T^{\frac{1}{2}} \left\Vert \gamma \right\Vert_{L^\infty_T H^{\alpha,\beta}}
\left\Vert \partial_*^{\mathbf{k}} \gamma(0) \right\Vert_{H^{\alpha,\beta}} + \\
& \qquad \qquad \qquad
+  C T^{\frac{1}{2}} \left\Vert \partial_*^{\mathbf{k}} \gamma
\right\Vert_{L^\infty_T H^{\alpha,\beta}}
\left\Vert \zeta \right\Vert_{L^1_T H^{\alpha,\beta}} + \\
& \qquad \qquad \qquad
+ C T^{\frac{1}{2}} \left\Vert \gamma \right\Vert_{L^\infty_T H^{\alpha,\beta}}
\left\Vert \zeta_{\mathbf{k}} \right\Vert_{L^1_T H^{\alpha,\beta}} + \\
& \qquad \qquad \qquad + C_{\mathbf{k}} T^{\frac{1}{2}} \left\Vert \gamma 
\right\Vert_{L^\infty_T H^{\alpha,\beta}}
\sup_{\substack{\mathbf{c}\leq \mathbf{k} \\
\mathbf{c}\neq 0 \\ \mathbf{c}\neq \mathbf{k}}}
\left\Vert B\left( \partial_*^{\mathbf{c}} \gamma,
\partial_*^{\mathbf{k}-\mathbf{c}} \gamma \right) \right\Vert_{L^1_T H^{\alpha,\beta}}.
\end{aligned}
\end{equation}
Combining \eqref{eq:bd50}-\eqref{eq:bd51} with \eqref{eq:bd55}, and choosing $T$ sufficiently
small depending only on $\left\Vert \gamma \right\Vert_{L^\infty_T H^{\alpha,\beta}}$
and $\left\Vert \zeta \right\Vert_{L^1_T H^{\alpha,\beta}}$
(and noting that $\left\Vert \zeta \right\Vert_{L^1_T H^{\alpha,\beta}}$ scales at worst like
$T^{\frac{1}{2}}$ for $T$ small), we obtain:
\begin{equation}
\partial_*^{\mathbf{k}} \gamma \in L^\infty_T H^{\alpha,\beta}
\end{equation}
\begin{equation}
\zeta_{\mathbf{k}} \in L^1_T H^{\alpha,\beta}
\end{equation}
Letting $\mathbf{k}$ range over multi-indices of magnitude $|\mathbf{k}|$, we are able
to conclude that $\gamma \in L^\infty_T H^{\alpha+|\mathbf{k}|,\beta}$. 

Finally we must verify the assertion (\ref{eq:bd55}) to use for the next step of the
induction. This follows immediately from
Proposition \ref{prop:KeyProp} combined with the facts that
$\partial_*^{\mathbf{k}} \gamma \in L^\infty_T H^{\alpha,\beta}$ and
$\zeta_{\mathbf{k}} \in L^1_T H^{\alpha,\beta}$.
\end{proof}

\begin{proposition}
Let $\gamma (t)$ be a solution of Boltzmann's equation with
$\left\Vert \mathbf{b} \right\Vert_\infty < \infty$, and suppose
$\gamma \in L^\infty_T H^{\alpha,\beta}$ and $B(\gamma,\gamma) \in L^1_T H^{\alpha,\beta}$
for some $\alpha,\beta > \frac{d-1}{2}$, and further suppose that
$\gamma (0) \in H^{\alpha,\beta+r}$ for some $r>0$. Then
$\gamma \in L^\infty_T H^{\alpha,\beta+r}$ and $B(\gamma,\gamma) \in L^1_T H^{\alpha,\beta+r}$.
\end{proposition}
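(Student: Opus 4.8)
By Theorem~\ref{thm:lwp}, local well-posedness holds in $H^{\alpha,\beta+r}$ (since $\alpha,\beta+r>\tfrac{d-1}{2}$), so there is a maximal $H^{\alpha,\beta+r}$-solution coinciding with $\gamma$ on its interval of existence; this legitimizes all formal manipulations below, and it suffices to show that $\|\gamma(t)\|_{H^{\alpha,\beta+r}}$ stays finite on $[0,T]$. Splitting $[0,T]$ into finitely many short subintervals --- whose lengths depend only on $\|\gamma\|_{L^\infty_T H^{\alpha,\beta}}$ and $\|B(\gamma,\gamma)\|_{L^1_T H^{\alpha,\beta}}$, the latter scaling like $T^{1/2}$ by the remark following Theorem~\ref{thm:lwp} --- it is enough to produce an a~priori bound for $\|\gamma\|_{L^\infty_{T'}H^{\alpha,\beta+r}}$ on a short interval $[0,T']$ in terms of $\|\gamma(0)\|_{H^{\alpha,\beta+r}}$ and those two base-level quantities. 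Finally, letting $\delta_*>0$ be the infimum over $\beta'\in[\beta,\beta+r]$ of the smallness parameter $\delta(\alpha,\beta',d)$ of Proposition~\ref{prop:KeyProp} (positive by the asserted continuous dependence), it suffices to treat $0<r\le\delta_*$: the general case follows by iterating this step $\lceil r/\delta_*\rceil$ times, raising the $\beta$-index by $\delta_*$ each time, since each step outputs $\gamma\in L^\infty_T H^{\alpha,\beta+j\delta_*}$ and $B(\gamma,\gamma)\in L^1_T H^{\alpha,\beta+j\delta_*}$, which become the base level for the next.

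Assume $0<r\le\delta_*$. Following Proposition~\ref{prop:gkp-mts}, I would set $\zeta(t)=B(\gamma(t),\gamma(t))$ and work with the closed Duhamel system for $(\gamma,\zeta)$,
\begin{equation*}
\gamma(t)=e^{\frac12 it\Delta_{\pm}}\gamma(0)-i\int_0^t e^{\frac12 i(t-t_1)\Delta_{\pm}}\zeta(t_1)\,dt_1,
\end{equation*}
\begin{equation*}
\begin{aligned}
\zeta(t)={}&B\!\left(e^{\frac12 it\Delta_{\pm}}\gamma(0),\,e^{\frac12 it\Delta_{\pm}}\gamma(0)\right)
-i\int_0^t B\!\left(e^{\frac12 i(t-t_1)\Delta_{\pm}}\zeta(t_1),\,e^{\frac12 i(t-t_1)\Delta_{\pm}}\gamma(t_1)\right)dt_1\\
&{}-i\int_0^t B\!\left(e^{\frac12 i(t-t_1)\Delta_{\pm}}\gamma(t_1),\,e^{\frac12 i(t-t_1)\Delta_{\pm}}\zeta(t_1)\right)dt_1,
\end{aligned}
\end{equation*}
obtained from $(i\partial_t+\tfrac12\Delta_{\pm})\zeta=B(\zeta,\gamma)+B(\gamma,\zeta)$ and the tensorization trick of \cite{PC2010}. (As in \cite{CDP2017} one must check that the fixed point constructed this way is the original $\gamma$; this is the same ``formally trivial'' point noted earlier and I would not dwell on it.) Note that, unlike the $\alpha$-index proposition, \emph{no} Leibniz-type correction terms arise, precisely because $\gamma$ and $\zeta$ are carried at the \emph{same} high regularity level: there is no attempt to commute $\langle\nabla_x-\nabla_{x^\prime}\rangle^r$ through $B^+$, which would fail.

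I would then estimate in $L^\infty_{T'}H^{\alpha,\beta+r}$ and $L^1_{T'}H^{\alpha,\beta+r}$ respectively, splitting each bilinear term as $B=B^+-B^-$ and invoking Proposition~\ref{prop:KeyProp} with $\delta=r$ (after Cauchy--Schwarz $\|\cdot\|_{L^1_t}\le T'^{1/2}\|\cdot\|_{L^2_t}$ and Fubini, exactly as in Proposition~\ref{prop:gkp-mts}). Two mechanisms operate: for the \emph{gain} terms, the smoothing estimate \eqref{eq:gain-est} bounds $\|B^+(e^{\frac12 it\Delta_{\pm}}\gamma_1,e^{\frac12 it\Delta_{\pm}}\gamma_2)\|_{L^2_t H^{\alpha,\beta+r}}$ by $\|\gamma_1\|_{H^{\alpha,\beta}}\|\gamma_2\|_{H^{\alpha,\beta}}$, i.e.\ by base-level norms only, so all gain contributions (including the gain part of the free term) have size $O(T'^{1/2})$ depending only on the base-level data; for the \emph{loss} terms, \eqref{eq:loss-est} bounds $\|B^-(e^{\frac12 it\Delta_{\pm}}\gamma_1,e^{\frac12 it\Delta_{\pm}}\gamma_2)\|_{L^2_t H^{\alpha,\beta+r}}$ by $\|\gamma_1\|_{H^{\alpha,\beta+r}}\|\gamma_2\|_{H^{\alpha,\beta}}$, so the high-regularity norm always sits on a \emph{single} factor, multiplied by a base-level norm. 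With $N_{T'}:=\|\gamma\|_{L^\infty_{T'}H^{\alpha,\beta+r}}+\|\zeta\|_{L^1_{T'}H^{\alpha,\beta+r}}$ one then collects an inequality of the form
\begin{equation*}
N_{T'}\le C\|\gamma(0)\|_{H^{\alpha,\beta+r}}\bigl(1+\|\gamma(0)\|_{H^{\alpha,\beta}}\bigr)+C\,T'^{1/2}\bigl(1+\|\gamma\|_{L^\infty_T H^{\alpha,\beta}}^2\bigr)+C\bigl(T'^{1/2}+T'^{1/2}\|\gamma\|_{L^\infty_T H^{\alpha,\beta}}+\|\zeta\|_{L^1_{T'}H^{\alpha,\beta}}\bigr)N_{T'},
\end{equation*}
and since $\|\zeta\|_{L^1_{T'}H^{\alpha,\beta}}=O(T'^{1/2})$ for small $T'$, the $N_{T'}$-prefactor is below $\tfrac12$ once $T'$ is small depending only on the base-level quantities; absorbing it yields $N_{T'}<\infty$. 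Iterating in time, and then in the $\beta$-index, gives $\gamma\in L^\infty_T H^{\alpha,\beta+r}$, and $B(\gamma,\gamma)\in L^1_T H^{\alpha,\beta+r}$ since that norm is just $\|\zeta\|_{L^1_T H^{\alpha,\beta+r}}$.

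The main obstacle is not a single estimate but the \emph{architecture}: one must never place the high-regularity norm on both factors of a bilinear term --- the loss estimate \eqref{eq:loss-est} barely permits this and the gain estimate \eqref{eq:gain-est} permits it with room to spare --- so that the time of existence at each step depends only on base-level norms and the time-iteration closes; this is exactly what rules out the pathological early blow-up the proposition addresses. A secondary point is that \eqref{eq:gain-est} only improves $\beta$ by a bounded amount, which forces the bootstrap through finitely many small increments with a uniformly positive smallness parameter (handled by continuity of $\delta$). Everything else is a faithful repetition of the machinery of Section~\ref{sec:Mts}; since no integer-order commutator identities are needed (the loss term is treated by the Fourier-analytic form of \eqref{eq:loss-est}, the gain term needs no commutation at all), the argument works for arbitrary real $r>0$, in contrast to the integer restriction in part~(i).
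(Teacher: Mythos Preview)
Your proof is correct and follows essentially the same approach as the paper's: reduce to small $r$ via iteration in the $\beta$-index, write the closed Duhamel system for $(\gamma,\zeta)$, apply Proposition~\ref{prop:KeyProp} with $\delta=r$, and close the estimate for small $T'$ depending only on the base-level norms. Your explicit separation of gain and loss contributions and your care about the uniformity of the step size $\delta_*$ over $\beta'\in[\beta,\beta+r]$ are welcome elaborations, but they do not constitute a different route.
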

\begin{proof}
As usual, it suffices to prove the result for a small time depending on
$\left\Vert \gamma \right\Vert_{L^\infty_T H^{\alpha,\beta}}$ and
$\left\Vert B(\gamma,\gamma) \right\Vert_{L^1_T H^{\alpha,\beta}}$. Furthermore, we will
only prove the result for small values of $r$ (with smallness only depending on
$d,\alpha,\beta$), as allowed by
Proposition \ref{prop:KeyProp}; the general result then follows immediately.

We know $\gamma$ solves Boltzmann's equation,
\begin{equation}
\left( i \partial_t + \frac{1}{2} (\Delta_x - \Delta_{x^\prime}) \right) \gamma
= B (\gamma,\gamma)
\end{equation}
This equation is equivalent to the following system of equations, where we write
$\zeta (t) = B(\gamma(t),\gamma(t))$:
\begin{equation}
\gamma (t) = e^{\frac{1}{2} i t \Delta_{\pm}} \gamma (0) - i \int_0^t
e^{\frac{1}{2} i (t-t_1) \Delta_{\pm}} \zeta(t_1) dt_1
\end{equation}
\begin{equation}
\begin{aligned}
& \zeta (t) = 
B \left( e^{\frac{1}{2} i t \Delta_{\pm}} \gamma (0),
e^{\frac{1}{2} i t \Delta_{\pm}} \gamma (0) \right) \\
& \qquad \qquad \qquad - i \int_0^t 
B\left( e^{\frac{1}{2} i (t-t_1) \Delta_{\pm}} \gamma(t_1),
e^{\frac{1}{2} i (t-t_1) \Delta_{\pm}} \zeta(t_1) \right) dt_1 \\
& \qquad \qquad \qquad - i \int_0^t 
B\left( e^{\frac{1}{2} i (t-t_1) \Delta_{\pm}} \zeta(t_1),
e^{\frac{1}{2} i (t-t_1) \Delta_{\pm}} \gamma(t_1) \right) dt_1 \\
\end{aligned}
\end{equation}
Applying Proposition \ref{prop:KeyProp} with a small $\delta > 0$, we obtain:
\begin{equation}
\begin{aligned}
& \left\Vert \gamma \right\Vert_{L^\infty_T H^{\alpha,\beta+\delta}} \leq
\left\Vert \gamma(0) \right\Vert_{H^{\alpha,\beta+\delta}} +
\left\Vert \zeta \right\Vert_{L^1_T H^{\alpha,\beta+\delta}}
\end{aligned}
\end{equation}
\begin{equation}
\begin{aligned}
& \left\Vert \zeta \right\Vert_{L^1_T H^{\alpha,\beta+\delta}} \leq
C T^{\frac{1}{2}} \left\Vert \gamma(0) \right\Vert_{H^{\alpha,\beta+\delta}}
\left\Vert \gamma (0) \right\Vert_{H^{\alpha,\beta}} + \\
&\qquad + C T^{\frac{1}{2}} \left\Vert \gamma \right\Vert_{L^\infty_T H^{\alpha,\beta+\delta}}
\left\Vert \zeta \right\Vert_{L^1_T H^{\alpha,\beta}} +
 C T^{\frac{1}{2}} \left\Vert \zeta \right\Vert_{L^1_T H^{\alpha,\beta+\delta}}
\left\Vert \gamma \right\Vert_{L^\infty_T H^{\alpha,\beta}}
\end{aligned}
\end{equation}
Since $\left\Vert \zeta \right\Vert_{L^1_T H^{\alpha,\beta}}$ scales at worst like
$\mathcal{O} (T^{\frac{1}{2}})$, we easily deduce that
$\gamma \in L^\infty_T H^{\alpha,\beta+\delta}$ and $\zeta \in L^1_T H^{\alpha,\beta+\delta}$,
as soon as $T$ is chosen sufficiently small depending only on
the norm of $\gamma$ in $H^{\alpha,\beta}$.
\end{proof}

\section{Regularity in time}
\label{sec:treg}

In this section we discuss regularity in the \emph{time} variable.
This should seem to be a very simple matter due to the obvious formula
\begin{equation}
\partial_t B(\gamma,\gamma) = B ( \partial_t \gamma, \gamma) +
B(\gamma, \partial_t \gamma)
\end{equation}
and the fact that $\partial_t$ commutes with 
$\left( i \partial_t + \frac{1}{2} (\Delta_x - \Delta_{x^\prime}) \right)$. The difficulty
which arises can already be seen when we try to control
$\partial_t \gamma$ in $L^\infty_T H^{\alpha,\beta}$ (in fact we will need to control higher
derivatives $\partial^k_t \gamma$ to eventually prove that $\gamma$ is locally 
$C^r$ in $(t,x,x^\prime)$). Indeed we have the following closed equation for
$\partial_t \gamma$:
\begin{equation}
\left( i \partial_t + \frac{1}{2} (\Delta_x - \Delta_{x^\prime}) \right)
\left( \partial_t \gamma \right) =
B ( \partial_t \gamma, \gamma) + B(\gamma,\partial_t \gamma)
\end{equation}
We can re-cast this system as a closed pair of integral equations for the
functions $\left\{ \partial_t \gamma, \partial_t \zeta \right\}$ where
$\zeta = B(\gamma,\gamma) $, exactly as we have done previously to propagate regularity and
moments in all other variables.

However, to solve the system, we will at the very least need to know that
$(\partial_t \gamma) (0) \in H^{\alpha,\beta}$, which means
\begin{equation}
\frac{i}{2} (\Delta_x - \Delta_{x^\prime}) \gamma (0) - i B(\gamma(0),\gamma (0))
\in H^{\alpha,\beta} 
\end{equation}
For the first term it is enough to suppose that
$\gamma(0) \in H^{\alpha+1,\beta+1}$ (say), but the second term is tricky because
Proposition \ref{prop:KeyProp} does not provide bounds for $B(\gamma,\gamma)$ in
the spaces $H^{\alpha,\beta}$. 
 Only quantities involving the free propagator, such as
$B\left(e^{\frac{1}{2} i t \Delta_{\pm}} \gamma(0),
 e^{\frac{1}{2} i t \Delta_{\pm}} \gamma(0) \right)$, are
controlled via Proposition \ref{prop:KeyProp}. 
To resolve this problem, in Appendix \ref{sec:AppA} we prove bilinear estimates for
$B(\gamma,\gamma)$ \emph{without} a free propagator; the price we must pay is a small loss
 in the $\beta$ index for the gain term (and we must also assume $\alpha,\beta > \frac{d}{2}$).
Due to the loss coming from Proposition \ref{prop:AppA-lossy}, it is not possible to use
that bound in place of Proposition \ref{prop:KeyProp} elsewhere in this paper.

\begin{proposition}
Let $\gamma (t)$ be a solution of Boltzmann's equation with $\left\Vert \mathbf{b}
\right\Vert_\infty < \infty$, and suppose $\gamma \in L^\infty_T H^{\alpha,\beta}$
and $B(\gamma,\gamma) \in L^1_T H^{\alpha,\beta}$ for some $\alpha,\beta > \frac{d-1}{2}$.
Further suppose that $K>0$ is an integer with $K < \min(\alpha,\beta) - \frac{d}{2}$.
 Then for all $1 \leq k \leq K$ there
holds $\partial_t^k \gamma \in L^\infty_T H^{\alpha-k,\beta-k}$.
\end{proposition}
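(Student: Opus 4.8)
The plan is a straightforward induction on $k$, the base case $k=0$ being the hypothesis $\gamma \in L^\infty_T H^{\alpha,\beta}$. Assume that $\partial_t^j \gamma \in L^\infty_T H^{\alpha-j,\beta-j}$ has already been established for every $0 \le j \le k$ with $k < K$; I want to promote this to $j = k+1$. Differentiating Boltzmann's equation for $\gamma$ a total of $k$ times in $t$, using that $\partial_t$ commutes with $i\partial_t + \tfrac12(\Delta_x - \Delta_{x'})$, and expanding $\partial_t^k B(\gamma,\gamma)$ by the Leibniz rule and the bilinearity of $B$, one obtains the pointwise-in-time identity
\begin{equation}
\partial_t^{k+1} \gamma = \frac{i}{2} \left( \Delta_x - \Delta_{x'} \right) \partial_t^k \gamma
- i \sum_{j=0}^{k} \binom{k}{j} B \left( \partial_t^j \gamma, \partial_t^{k-j} \gamma \right).
\end{equation}
In contrast to the moment and spatial-regularity results of the previous sections, no fixed-point argument is needed here: $\partial_t^{k+1}\gamma$ is given \emph{explicitly} in terms of strictly lower-order (or already-known) time derivatives, so it suffices to bound the right-hand side in $L^\infty_T H^{\alpha-(k+1),\beta-(k+1)}$.

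For the first term, the Fourier symbol of $\Delta_x - \Delta_{x'}$ is $|\xi|^2 - |\xi'|^2 = (\xi+\xi')\cdot(\xi-\xi')$, which is dominated by $\langle \xi+\xi'\rangle \langle \xi-\xi'\rangle$; hence $\Delta_x - \Delta_{x'}$ maps $H^{a,b}$ boundedly into $H^{a-1,b-1}$, and the inductive hypothesis gives $(\Delta_x - \Delta_{x'})\partial_t^k\gamma \in L^\infty_T H^{\alpha-(k+1),\beta-(k+1)}$. For the bilinear terms, the crucial obstacle is that Proposition \ref{prop:KeyProp} only controls $B$ when the \emph{free propagator} has been applied to both of its arguments, which is useless here, where we need a bound on $B$ of fixed-time functions. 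This is exactly the situation that the bilinear estimate with loss of Appendix \ref{sec:AppA} (Proposition \ref{prop:AppA-lossy}) is designed to handle: for $a,b > \tfrac d2$ and a suitably small $\varepsilon > 0$ it yields $\|B(\gamma_1,\gamma_2)\|_{H^{a,b-\varepsilon}} \lesssim \|\gamma_1\|_{H^{a,b}} \|\gamma_2\|_{H^{a,b}}$, and it is precisely the appearance of $\tfrac d2$ (rather than $\tfrac{d-1}{2}$) and of the $\varepsilon$-loss that forces the hypothesis $K < \min(\alpha,\beta) - \tfrac d2$. Since $k < K$, both $\alpha - k$ and $\beta - k$ exceed $\tfrac d2$; applying the estimate with $a = \alpha-k$, $b = \beta - k$, and using that each $\partial_t^j\gamma$ with $0 \le j \le k$ lies in $H^{\alpha-j,\beta-j} \subset H^{\alpha-k,\beta-k}$, one gets $B(\partial_t^j\gamma,\partial_t^{k-j}\gamma) \in L^\infty_T H^{\alpha-k,\beta-k-\varepsilon}$. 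Taking $\varepsilon \le 1$ and using the embedding $H^{\alpha-k,\beta-k-\varepsilon} \subset H^{\alpha-(k+1),\beta-(k+1)}$ completes the inductive step, and with it the proof.

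Finally, a remark on rigor. Because $\alpha-j,\beta-j > \tfrac{d-1}{2}$ for all $j \le K$, Theorem \ref{thm:lwp} provides local well-posedness in each of the spaces $H^{\alpha-j,\beta-j}$, so, after the usual subdivision of $[0,T]$ and a mollification/approximation argument as in Section \ref{sec:Mts}, the solution genuinely lives in all of these spaces on a common interval, the time derivatives are taken in the strong $L^\infty_T H^{\alpha-j,\beta-j}$ sense, and the displayed identity follows by differentiating the Duhamel formula. I expect the only genuinely delicate point to be organizing this bookkeeping and checking that the per-step $\varepsilon$-loss is harmless, being absorbed each time by the full unit of regularity already conceded in the target index, so that it never accumulates; once Proposition \ref{prop:AppA-lossy} is in hand, the rest is routine.
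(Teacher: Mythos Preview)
Your argument is correct and takes a genuinely different, more elementary route than the paper. The paper follows the same template as Sections \ref{sec:Mts}--\ref{sec:pers}: it rewrites the equation for $\partial_t^k\gamma$ as a coupled Duhamel system for the pair $(\partial_t^k\gamma,\zeta_k)$ with $\zeta_k=B(\partial_t^k\gamma,\gamma)+B(\gamma,\partial_t^k\gamma)$, closes the estimates using the bilinear Strichartz bound (Proposition \ref{prop:KeyProp}), and invokes Proposition \ref{prop:AppA-lossy} only to check the initial condition $\partial_t^k\gamma(0)\in H^{\alpha-k,\beta-k}$. You instead observe that once one is willing to lose a full unit in both indices at each step, the identity $i\partial_t^{k+1}\gamma=-\tfrac12\Delta_\pm\partial_t^k\gamma+\sum_j\binom{k}{j}B(\partial_t^j\gamma,\partial_t^{k-j}\gamma)$ expresses the new time derivative \emph{explicitly} in terms of lower-order ones, so no fixed-point argument is needed at all; Proposition \ref{prop:AppA-lossy} applied pointwise in $t$ handles every bilinear term, and its $\varepsilon$-loss is swallowed by the unit drop in $\beta$. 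This buys you a shorter, more transparent proof, and even the slightly stronger conclusion $B(\partial_t^j\gamma,\partial_t^{k-j}\gamma)\in L^\infty_T H^{\alpha-k,\beta-k-\varepsilon}$. The paper's approach, by contrast, yields the sharper byproduct $\zeta_k\in L^1_T H^{\alpha-k,\beta-k}$ (no $\varepsilon$-loss), which is not needed for the stated proposition but is consistent with the auxiliary norms tracked elsewhere in the paper.
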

\begin{proof}
We will prove the desired result on a small time interval $T>0$ depending only on the
underlying solution $\gamma(t)$ of Boltzmann's equation; the general result then follows
immediately.

For any integer $k \geq 1$ we have
\begin{equation}
\label{eq:dkteq}
\begin{aligned}
& \left( i \partial_t + \frac{1}{2} (\Delta_x - \Delta_{x^\prime}) \right)
\left( \partial_t^k \gamma \right)
= B (\partial_t^k \gamma,\gamma) + B(\gamma,\partial_t^k \gamma) + \\
&\qquad \qquad \qquad \qquad
\qquad \qquad
\qquad \qquad
 + \sum_{0<j<k} \binom{k}{j} B(\partial_t^{k-j} \gamma, \partial_t^j \gamma)
\end{aligned}
\end{equation}
Let us define
\begin{equation}
\zeta_k (t) = B ( \partial_t^k \gamma(t),\gamma(t)) +
B( \gamma(t), \partial_t^k \gamma(t))
\end{equation}
Then (\ref{eq:dkteq}) is equivalent to the following system of equations:
\begin{equation}
\begin{aligned}
& \partial_t^k \gamma (t) =
e^{\frac{1}{2} i t \Delta_{\pm}} \partial^k_t \gamma (0)
 - i \int_0^t e^{\frac{1}{2} i (t-t_1) \Delta_{\pm}} \zeta_k (t_1) dt_1 \\
& - i \sum_{0 < j < k} \binom{k}{j}
\int_0^t e^{\frac{1}{2} i (t-t_1) \Delta_{\pm}}
B \left( \partial_t^{k-j} \gamma (t_1),
\partial_t^j \gamma (t_1) \right) dt_1
\end{aligned}
\end{equation}
\begin{equation}
\begin{aligned}
& \zeta_k (t) =
B \left( e^{\frac{1}{2} i t \Delta_{\pm}} \partial_t^k \gamma(0),
e^{\frac{1}{2} i t \Delta_{\pm}} \gamma(0) \right) +
B \left( e^{\frac{1}{2} i t \Delta_{\pm}} \gamma(0),
e^{\frac{1}{2} i t \Delta_{\pm}} \partial_t^k \gamma(0) \right) \\
& - i \int_0^t B \left( e^{\frac{1}{2} i (t-t_1) \Delta_{\pm}}
\zeta_k (t_1), e^{\frac{1}{2} i (t-t_1) \Delta_{\pm}} \gamma (t_1) \right) dt_1 \\
& - i \int_0^t B \left( e^{\frac{1}{2} i (t-t_1) \Delta_{\pm}}
\partial_t^k \gamma (t_1), e^{\frac{1}{2} i (t-t_1) \Delta_{\pm}} \zeta (t_1) \right) dt_1 \\
& - i \int_0^t B \left( e^{\frac{1}{2} i (t-t_1) \Delta_{\pm}}
\zeta (t_1), e^{\frac{1}{2} i (t-t_1) \Delta_{\pm}} \partial_t^k \gamma (t_1) \right) dt_1 \\
& - i \int_0^t B \left( e^{\frac{1}{2} i (t-t_1) \Delta_{\pm}}
\gamma (t_1), e^{\frac{1}{2} i (t-t_1) \Delta_{\pm}} \zeta_k (t_1) \right) dt_1 \\
& - i \sum_{0<j<k} \binom{k}{j} \times \\
& \qquad \quad \times \int_0^t B \left( e^{\frac{1}{2} i (t-t_1) \Delta_{\pm}}
B\left( \partial_t^{k-j} \gamma (t_1),\partial_t^j \gamma (t_1) \right),
 e^{\frac{1}{2} i (t-t_1) \Delta_{\pm}} \gamma (t_1) \right) dt_1 \\
& - i \sum_{0<j<k} \binom{k}{j} \times \\
& \qquad \quad \times \int_0^t B \left( e^{\frac{1}{2} i (t-t_1) \Delta_{\pm}}
\gamma (t_1),
 e^{\frac{1}{2} i (t-t_1) \Delta_{\pm}} 
B\left( \partial_t^{k-j} \gamma (t_1),\partial_t^j \gamma (t_1) \right)
 \right) dt_1 \\
\end{aligned}
\end{equation}
Hence by applying Proposition (\ref{prop:KeyProp}), we obtain the following estimates:
\begin{equation}
\begin{aligned}
& \left\Vert \partial_t^k \gamma(t) \right\Vert_{L^\infty_T H^{\alpha-k,\beta-k}} \leq
\left\Vert \partial_t^k \gamma (0) \right\Vert_{H^{\alpha-k,\beta-k}}
+ \left\Vert \zeta_k \right\Vert_{L^1_T H^{\alpha-k,\beta-k}} \\
& \qquad \qquad \qquad + \sum_{0 < j < k} \binom{k}{j}
\left\Vert B \left(
\partial_t^{k-j} \gamma (t_1), \partial_t^j \gamma (t_1) \right)
\right\Vert_{L^1_T H^{\alpha-k,\beta-k}}
\end{aligned}
\end{equation}
\begin{equation}
\begin{aligned}
& \left\Vert \zeta_k \right\Vert_{L^1_T H^{\alpha-k,\beta-k}} \leq
C T^{\frac{1}{2}} \left\Vert \partial_t^k \gamma(0) 
\right\Vert_{H^{\alpha-k,\beta-k}} \left\Vert \gamma(0)
\right\Vert_{H^{\alpha,\beta}} + \\
& + C T^{\frac{1}{2}} \left\Vert \zeta_k \right\Vert_{L^1_T H^{\alpha-k,\beta-k}}
\left\Vert \gamma \right\Vert_{L^\infty_T H^{\alpha,\beta}}
+ C T^{\frac{1}{2}} \left\Vert \partial_t^k \gamma \right\Vert_{L^\infty_T H^{\alpha-k,\beta-k}}
\left\Vert \zeta \right\Vert_{L^1_T H^{\alpha,\beta}} + \\
& + C_k T^{\frac{1}{2}} \left\Vert \gamma \right\Vert_{L^\infty_T H^{\alpha,\beta}}
\sup_{0 < j < k}
\left\Vert B \left( \partial_t^{k-j} \gamma(t),\partial_t^j \gamma(t) \right)
\right\Vert_{L^1_T H^{\alpha-k,\beta-k}}
\end{aligned}
\end{equation}
Now if we assume that $\partial_t^k \gamma(0) \in H^{\alpha-k,\beta-k}$,
$\partial_t^j \gamma \in L^\infty_T H^{\alpha-j,\beta-j}$ for $0\leq j < k$,
and $\zeta_j \in L^1_T H^{\alpha-j,\beta-j}$ for $0\leq j < k$, then we can
show that $\partial_t^k \gamma \in L^\infty_T H^{\alpha-k,\beta-k}$
and $\zeta_k \in L^1_T H^{\alpha-k,\beta-k}$.

It only remains to verify that $\partial_t^k \gamma (0) \in H^{\alpha-k,\beta-k}$. To 
see this, observe that
\begin{equation}
i \partial_t^k \gamma + \frac{1}{2} (\Delta_x - \Delta_{x^\prime})
\partial_t^{k-1} \gamma =
\sum_{0 \leq j \leq k-1} \binom{k-1}{j} 
B ( \partial_t^{k-1-j} \gamma, \partial_t^j \gamma)
\end{equation}
Now since $\partial_t^{k-1} \gamma \in L^\infty_T H^{\alpha-k+1,\beta-k+1}$ (in fact it is
continuous in time in this functional space), we have $(\Delta_x - \Delta_{x^\prime})
\left(\partial_t^{k-1} \gamma\right)(0) \in H^{\alpha-k,\beta-k}$. Additionally, 
since $\left( \partial_t^j  \gamma (0)\right) \in H^{\alpha-j,\beta-j}$ for $0\leq j < k$,
by Proposition \ref{prop:AppA-lossy}, we find that
$B\left( (\partial_t^{k-1-j} \gamma) (0), (\partial_t^j \gamma(0)) \right) \in
H^{\alpha-k,\beta-k}$ when $0 \leq j \leq k-1$.
\end{proof}

\section{Continuity of the Solution Map}
\label{sec:cont}

It is sometimes useful to be able to approximate a given initial data by some other,
better behaved, initial data for the purpose of formal computations. In order to
pass the results of computations to the limit and reach a non-void conclusion about the original
data, it is necessary to know that the solution map is sufficiently smooth with respect
to perturbations. Such an argument is apparently necessary to prove the non-negativity
of solutions to Boltzmann's equation in the spaces $H^{\alpha,\beta}$ for arbitrary
$\alpha,\beta > \frac{d-1}{2}$, because we have no convenient
characterization of non-negativity of
$f$ just from looking at the (inverse) Wigner transform $\gamma$.

\begin{proposition}
Let $\gamma^j (t)$ be a solution of Boltzmann's equation with
$\left\Vert \mathbf{b}\right\Vert_\infty < \infty$, for $j=1,2$, with
$\gamma^j \in L^\infty_T H^{\alpha,\beta}$ and
$B(\gamma^j,\gamma^j) \in L^1_T H^{\alpha,\beta}$ for $j=1,2$ and
some $\alpha,\beta > \frac{d-1}{2}$. Furthermore, assume that
$\left\Vert \gamma^j \right\Vert_{L^\infty_T H^{\alpha,\beta}} \leq M$ for
$j=1,2$. Then we have
\begin{equation}
\left\Vert \gamma^1 - \gamma^2 \right\Vert_{L^\infty_T H^{\alpha,\beta}} \leq
C_{M,T} \left\Vert \gamma^1 (0) - \gamma^2 (0) \right\Vert_{H^{\alpha,\beta}}
\end{equation}
where the constant may depend on $\alpha,\beta$.
\end{proposition}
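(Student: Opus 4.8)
The strategy is the one used repeatedly above: set $w = \gamma^1 - \gamma^2$ and $\eta := B(\gamma^1,\gamma^1) - B(\gamma^2,\gamma^2)$, promote the difference equation to a closed system for the pair $\{w,\eta\}$ in which every bilinear term carries free propagators, and then close the estimate with Proposition \ref{prop:KeyProp} and Cauchy--Schwarz in time. A simplification relative to Sections \ref{sec:Mts}--\ref{sec:treg} is that no preliminary fixed-point argument is needed in order to know that the relevant quantities are finite: we are given $\gamma^j \in L^\infty_T H^{\alpha,\beta}$ and $\zeta^j := B(\gamma^j,\gamma^j) \in L^1_T H^{\alpha,\beta}$, so $w \in L^\infty_T H^{\alpha,\beta}$ and $\eta = \zeta^1 - \zeta^2 \in L^1_T H^{\alpha,\beta}$ automatically, and the quantitative bounds below are thereby justified. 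By bilinearity, $\eta = B(w,\gamma^1) + B(\gamma^2,w)$, and $w$ obeys
\begin{equation}
w(t) = e^{\frac{1}{2} i t \Delta_{\pm}} w(0) - i \int_0^t e^{\frac{1}{2} i (t-t_1)\Delta_{\pm}} \eta(t_1)\, dt_1 .
\end{equation}

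Since Proposition \ref{prop:KeyProp} controls $B$ only on inputs carrying free propagators, the first step is to derive a Duhamel identity for $\eta$ itself. Exactly as in the passage from \eqref{eq:duhamel1} to \eqref{eq:duhamel3}, I would write the two-particle transport equations
\begin{equation}
\begin{aligned}
\left( i\partial_t + \tfrac{1}{2}\Delta_{\pm}^{(2)} \right)(w \otimes \gamma^1) &= \eta \otimes \gamma^1 + w \otimes \zeta^1, \\
\left( i\partial_t + \tfrac{1}{2}\Delta_{\pm}^{(2)} \right)(\gamma^2 \otimes w) &= \zeta^2 \otimes w + \gamma^2 \otimes \eta ,
\end{aligned}
\end{equation}
put them in integral form, apply the collision contraction to each (which, as in \eqref{eq:duhamel3}, intertwines $e^{\frac{1}{2} i t \Delta_{\pm}^{(2)}}$ with $B(e^{\frac{1}{2} i t \Delta_{\pm}}\,\cdot\,,\,e^{\frac{1}{2} i t \Delta_{\pm}}\,\cdot\,)$), and add, obtaining
\begin{equation}
\begin{aligned}
\eta(t) ={}& B\!\left( e^{\frac{1}{2} i t \Delta_{\pm}} w(0), e^{\frac{1}{2} i t \Delta_{\pm}} \gamma^1(0) \right) + B\!\left( e^{\frac{1}{2} i t \Delta_{\pm}} \gamma^2(0), e^{\frac{1}{2} i t \Delta_{\pm}} w(0) \right) \\
& - i\int_0^t B\!\left( e^{\frac{1}{2} i (t-t_1)\Delta_{\pm}} \eta(t_1), e^{\frac{1}{2} i (t-t_1)\Delta_{\pm}} \gamma^1(t_1) \right) dt_1 \\
& - i\int_0^t B\!\left( e^{\frac{1}{2} i (t-t_1)\Delta_{\pm}} w(t_1), e^{\frac{1}{2} i (t-t_1)\Delta_{\pm}} \zeta^1(t_1) \right) dt_1 \\
& - i\int_0^t B\!\left( e^{\frac{1}{2} i (t-t_1)\Delta_{\pm}} \zeta^2(t_1), e^{\frac{1}{2} i (t-t_1)\Delta_{\pm}} w(t_1) \right) dt_1 \\
& - i\int_0^t B\!\left( e^{\frac{1}{2} i (t-t_1)\Delta_{\pm}} \gamma^2(t_1), e^{\frac{1}{2} i (t-t_1)\Delta_{\pm}} \eta(t_1) \right) dt_1 .
\end{aligned}
\end{equation}
Every bilinear term on the right now has the structure to which Proposition \ref{prop:KeyProp} applies.

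The estimates then proceed precisely as in Proposition \ref{prop:gkp-mts}: take the $L^1_T H^{\alpha,\beta}$ norm of the identity for $\eta$, bound $\int_0^t$ by $\int_0^T$, apply Fubini, estimate $\|\cdot\|_{L^1_T(dt)} \le T^{1/2}\|\cdot\|_{L^2_T(dt)}$, and invoke Proposition \ref{prop:KeyProp}; distributing the factors ($\gamma^j, w \in L^\infty_T H^{\alpha,\beta}$ and $\eta,\zeta^j \in L^1_T H^{\alpha,\beta}$) by H\"older in $t_1$ gives
\begin{equation}
\begin{aligned}
\| \eta \|_{L^1_T H^{\alpha,\beta}} \le{}& C T^{\frac{1}{2}} \big( \| \gamma^1(0) \|_{H^{\alpha,\beta}} + \| \gamma^2(0) \|_{H^{\alpha,\beta}} \big) \| w(0) \|_{H^{\alpha,\beta}} \\
& + C T^{\frac{1}{2}} \big( \| \gamma^1 \|_{L^\infty_T H^{\alpha,\beta}} + \| \gamma^2 \|_{L^\infty_T H^{\alpha,\beta}} \big) \| \eta \|_{L^1_T H^{\alpha,\beta}} \\
& + C T^{\frac{1}{2}} \big( \| \zeta^1 \|_{L^1_T H^{\alpha,\beta}} + \| \zeta^2 \|_{L^1_T H^{\alpha,\beta}} \big) \| w \|_{L^\infty_T H^{\alpha,\beta}} ,
\end{aligned}
\end{equation}
while the Duhamel formula for $w$ yields $\| w \|_{L^\infty_T H^{\alpha,\beta}} \le \| w(0) \|_{H^{\alpha,\beta}} + \| \eta \|_{L^1_T H^{\alpha,\beta}}$. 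Substituting the latter into the former, using $\| \gamma^j \|_{L^\infty_T H^{\alpha,\beta}} \le M$, and recalling that $\| \zeta^j \|_{L^1_T H^{\alpha,\beta}} = \mathcal{O}(T^{1/2})$ for small $T$ (the remark following Theorem \ref{thm:lwp}), one finds that for $T = \tau$ small enough, depending only on $M$ (and $d,\alpha,\beta$), the term $\| \eta \|_{L^1_T H^{\alpha,\beta}}$ can be absorbed on the left; this gives $\| \eta \|_{L^1_\tau H^{\alpha,\beta}} \lesssim_M \tau^{1/2}\| w(0) \|_{H^{\alpha,\beta}}$ and hence $\| w \|_{L^\infty_\tau H^{\alpha,\beta}} \le C_M \| w(0) \|_{H^{\alpha,\beta}}$. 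All hypotheses being stable under time translation --- the $\gamma^j$ stay bounded by $M$ on $[0,T]$, and restarting the equation at any $t_0$ keeps $\| \zeta^j \|_{L^1_{[t_0,t_0+\tau]} H^{\alpha,\beta}} = \mathcal{O}(\tau^{1/2})$ --- the same bound holds on each subinterval, $\| w \|_{L^\infty_{[t_0,t_0+\tau]} H^{\alpha,\beta}} \le C_M \| w(t_0) \|_{H^{\alpha,\beta}}$, and chaining $\lceil T/\tau \rceil$ of these gives $\| \gamma^1 - \gamma^2 \|_{L^\infty_T H^{\alpha,\beta}} \le C_M^{\lceil T/\tau \rceil} \| \gamma^1(0) - \gamma^2(0) \|_{H^{\alpha,\beta}}$, which is the claim with $C_{M,T} = C_M^{\lceil T/\tau \rceil}$ (no effort is made to optimize the growth in $T$).

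The only genuine obstacle is the one pervading the whole paper: $B$ is not bounded on $H^{\alpha,\beta}$ in the absence of a free propagator, which is exactly why the difference equation must first be recast as the system $\{w,\eta\}$. The sole extra bookkeeping point is that the term $B(w,\zeta^1) + B(\zeta^2,w)$ carries the low-regularity factors $\zeta^j$ rather than $\gamma^j$; this is harmless because $\| \zeta^j \|_{L^1_T H^{\alpha,\beta}}$ is itself $\mathcal{O}(T^{1/2})$, so that contribution only helps with the smallness needed for the absorption step.
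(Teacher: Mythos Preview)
Your proof is correct and follows essentially the same approach as the paper's: define the difference pair $\{w,\eta\}$ (the paper writes $\{\gamma^r,\zeta^r\}$), derive coupled Duhamel formulas in which every bilinear term carries free propagators, apply Proposition~\ref{prop:KeyProp} with Cauchy--Schwarz in time, and close for small $T$ (then iterate). The only cosmetic differences are your choice of telescoping $\eta = B(w,\gamma^1)+B(\gamma^2,w)$ versus the paper's $\zeta^r = B(\gamma^1,\gamma^r)+B(\gamma^r,\gamma^2)$, and your explicit iteration in time where the paper merely asserts it; neither affects the argument.
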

\begin{proof}
We will prove the result for $T$ small enough depending only on the upper bound $M$; the
full result then follows by iterating in time. 

We recall that each $\gamma^i$ solves Boltzmann's equation:
\begin{equation}
\left( i \partial_t + \frac{1}{2} \left( \Delta_x - \Delta_{x^\prime} \right) \right)
\gamma^j = B \left( \gamma^j,\gamma^j \right)
\end{equation}
This is equivalent to the following system of equations, where
$\zeta^j (t) = B(\gamma^j (t),\gamma^j (t))$:
\begin{equation}
\begin{aligned}
& \gamma^j (t) = e^{\frac{1}{2} i t \Delta_{\pm}} \gamma^j (0) - i \int_0^t
e^{\frac{1}{2} i(t-t_1)\Delta_{\pm}} \zeta^j (t_1) dt_1
\end{aligned}
\end{equation}
\begin{equation}
\begin{aligned}
& \zeta^j (t) = B \left( e^{\frac{1}{2} i t \Delta_{\pm}} \gamma^j (0), 
e^{\frac{1}{2} i t \Delta_{\pm}}\gamma^j (0) \right) \\
& \qquad \qquad - i \int_0^t B \left( e^{\frac{1}{2} i (t-t_1) \Delta_{\pm}} \gamma^j (t_1),
e^{\frac{1}{2} i (t-t_1) \Delta_{\pm}} \zeta^j (t_1) \right) dt_1 \\
& \qquad \qquad - i \int_0^t B \left( e^{\frac{1}{2} i (t-t_1) \Delta_{\pm}} \zeta^j (t_1),
e^{\frac{1}{2} i (t-t_1) \Delta_{\pm}} \gamma^j (t_1) \right) dt_1
\end{aligned}
\end{equation}
Let us define
\begin{equation}
\gamma^r (t) = \gamma^1 (t) - \gamma^2 (t)
\end{equation}
\begin{equation}
\zeta^r (t) = \zeta^1 (t) - \zeta^2 (t)
\end{equation}
Then we can write the following system of equations for $\gamma^r , \zeta^r$:
\begin{equation}
\begin{aligned}
\gamma^r (t) = e^{\frac{1}{2} i t \Delta_{\pm}} \gamma^r (0) - i \int_0^t
e^{\frac{1}{2} i(t-t_1)\Delta_{\pm}} \zeta^r (t_1) dt_1
\end{aligned}
\end{equation}
\begin{equation}
\begin{aligned}
& \zeta^r (t) 
= B \left( e^{\frac{1}{2} i t \Delta_{\pm}} \gamma^1 (0), 
e^{\frac{1}{2} i t \Delta_{\pm}}\gamma^r (0) \right) +
 B \left( e^{\frac{1}{2} i t \Delta_{\pm}} \gamma^r (0), 
e^{\frac{1}{2} i t \Delta_{\pm}}\gamma^2 (0) \right)\\
& \qquad \qquad - i \int_0^t B \left( e^{\frac{1}{2} i (t-t_1) \Delta_{\pm}} \gamma^1 (t_1),
e^{\frac{1}{2} i (t-t_1) \Delta_{\pm}} \zeta^r (t_1) \right) dt_1 \\
& \qquad \qquad - i \int_0^t B \left( e^{\frac{1}{2} i (t-t_1) \Delta_{\pm}} \gamma^r (t_1),
e^{\frac{1}{2} i (t-t_1) \Delta_{\pm}} \zeta^2 (t_1) \right) dt_1 \\
& \qquad \qquad - i \int_0^t B \left( e^{\frac{1}{2} i (t-t_1) \Delta_{\pm}} \zeta^1 (t_1),
e^{\frac{1}{2} i (t-t_1) \Delta_{\pm}} \gamma^r (t_1) \right) dt_1 \\
& \qquad \qquad - i \int_0^t B \left( e^{\frac{1}{2} i (t-t_1) \Delta_{\pm}} \zeta^r (t_1),
e^{\frac{1}{2} i (t-t_1) \Delta_{\pm}} \gamma^2 (t_1) \right) dt_1
\end{aligned}
\end{equation}
Using Proposition \ref{prop:KeyProp}, we deduce the following estimates:
\begin{equation}
\left\Vert \gamma^r \right\Vert_{L^\infty_T H^{\alpha,\beta}} \leq
\left\Vert \gamma^r (0) \right\Vert_{H^{\alpha,\beta}} +
\left\Vert \zeta^r \right\Vert_{L^1_T H^{\alpha,\beta}}
\end{equation}
\begin{equation}
\begin{aligned}
& \left\Vert \zeta^r \right\Vert_{L^1_T H^{\alpha,\beta}} \leq
C T^{\frac{1}{2}} \left(
\left\Vert \gamma^1 \right\Vert_{L^\infty_T H^{\alpha,\beta}} +
\left\Vert \gamma^2 \right\Vert_{L^\infty_T H^{\alpha,\beta}} \right)
 \left\Vert \gamma^r (0) \right\Vert_{H^{\alpha,\beta}} + \\
& \qquad + CT^{\frac{1}{2}} \left\Vert \gamma^1 \right\Vert_{L^\infty_T H^{\alpha,\beta}}
\left\Vert \zeta^r \right\Vert_{L^1_T H^{\alpha,\beta}}
+ C T^{\frac{1}{2}} \left\Vert \gamma^r \right\Vert_{L^\infty_T H^{\alpha,\beta}}
\left\Vert \zeta^2 \right\Vert_{L^1_T H^{\alpha,\beta}} + \\
& \qquad + C T^{\frac{1}{2}} \left\Vert \zeta^1 \right\Vert_{L^1_T H^{\alpha,\beta}}
\left\Vert \gamma^r \right\Vert_{L^\infty_T H^{\alpha,\beta}} +
C T^{\frac{1}{2}} \left\Vert \zeta^r \right\Vert_{L^1_T H^{\alpha,\beta}}
\left\Vert \gamma^2 \right\Vert_{L^\infty_T H^{\alpha,\beta}}
\end{aligned}
\end{equation}
Hence if we define
\begin{equation}
A_T = T^{\frac{1}{2}} \left\Vert \gamma^r \right\Vert_{L^\infty_T H^{\alpha,\beta}}
+ \left\Vert \zeta^r \right\Vert_{L^1_T H^{\alpha,\beta}}
\end{equation}
then we easily deduce
\begin{equation}
A_T \leq C_M T^{\frac{1}{2}} A_T + C_M T^{\frac{1}{2}}
\left\Vert \gamma^r (0) \right\Vert_{H^{\alpha,\beta}}
\end{equation}
Choosing $T$ sufficiently small (depending only on $M$), the conclusion follows.
\end{proof}

\appendix

\section{Bilinear Estimates with Loss}
\label{sec:AppA}

The main difficulty in proving time regularity estimates in Section \ref{sec:treg}
is the fact that Proposition \ref{prop:KeyProp} only controls the collision operator
in $L^1_T$, whereas we must make sense of $B(\gamma,\gamma)$ at a \emph{fixed}
time (namely $t=0$) just to write down $\left(\partial_t \gamma\right) (0)$. In order to resolve this
issue, in this Appendix we prove instantaneous bounds on the operator
$B(\gamma,\gamma)$ in the Sobolev spaces $H^{\alpha,\beta}$ when $\alpha, \beta >
\frac{d}{2}$. The proof involves a small loss in the $\beta$ index;
 for this reason, these estimates (as stated in this Appendix) \emph{cannot} 
replace Proposition \ref{prop:KeyProp} elsewhere in this paper, regardless of the
size of $\alpha,\beta$.
  The idea of the proof is drawn from Theorem 4.3 of
\cite{PC2011}.

\begin{proposition}
\label{prop:AppA-lossy}
Let $\alpha,\beta \in \left( \frac{d}{2},\infty\right)$. Then for any $\delta > 0$ we
have for all $\gamma_1,\gamma_2 \in H^{\alpha,\beta+\delta}$ the following estimates:
\begin{equation}
\left\Vert B^{\pm} \left( \gamma_1,\gamma_2 \right) \right\Vert_{H^{\alpha,\beta}} 
\leq C_\delta \left\Vert \mathbf{b} \right\Vert_\infty
\left\Vert \gamma_1 \right\Vert_{H^{\alpha,\beta+\delta}}
\left\Vert \gamma_2 \right\Vert_{H^{\alpha,\beta+\delta}}
\end{equation}
\end{proposition}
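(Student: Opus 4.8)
The plan is to prove the bound by reducing $B^{\pm}$ to an elementary weighted bilinear Fourier multiplier and then estimating that multiplier by Cauchy--Schwarz; no dispersive input is used, which is exactly why one pays the worse threshold $\alpha,\beta>\frac{d}{2}$ (rather than $\frac{d-1}{2}$ as in Proposition~\ref{prop:KeyProp}) together with the small loss $\delta$. First I would take the spatial Fourier transform of $B^{\pm}(\gamma_1,\gamma_2)$: expanding $\gamma_1,\gamma_2$ in Fourier variables and carrying out the $z$--integral, the factor $\hat{\mathbf{b}}^{\omega}$ is, by Fourier inversion, traded back for the bounded function $\mathbf{b}$ evaluated at a linear combination of the frequencies, and this introduces \emph{no} frequency constraint; the $\omega$--integral is then harmless because $|\mathbf{b}|\le\|\mathbf{b}\|_{\infty}$ pointwise, costing only $|\mathbb{S}^{d-1}|$. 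What remains, for each fixed $\omega$, is a bilinear expression in $\hat{\gamma}_1(\xi_1,\xi_1')$ and $\hat{\gamma}_2(\xi_2,\xi_2')$ supported where $\xi+\xi'=(\xi_1+\xi_1')+(\xi_2+\xi_2')$ and, writing $A_{\omega}=\mathbb{I}$ for $B^-$ and $A_{\omega}=\mathbb{I}-P_{\omega}$ for $B^+$, where $\xi-\xi'=A_{\omega}(\xi_1-\xi_1')+(\mathbb{I}-A_{\omega})(\xi_2-\xi_2')$; this last identity encodes the post-collisional geometry and is the only place $R_{\omega},P_{\omega}$ enter.

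Next I would distribute the Sobolev weights. From $1+|a+b|\le(1+|a|)(1+|b|)$ one gets $\langle\xi+\xi'\rangle^{\alpha}\lesssim\langle\xi_1+\xi_1'\rangle^{\alpha}+\langle\xi_2+\xi_2'\rangle^{\alpha}$, and since the ranges of $A_{\omega}$ and $\mathbb{I}-A_{\omega}$ are orthogonal, Pythagoras gives $\langle\xi-\xi'\rangle^{\beta}\lesssim\langle\xi_1-\xi_1'\rangle^{\beta}\langle\xi_2-\xi_2'\rangle^{\beta}$. The estimate then splits into two symmetric pieces according to which input retains the $\langle\xi+\xi'\rangle^{\alpha}$ weight; in the piece where $\gamma_1$ keeps it, $\gamma_1$ is placed in $L^2$ with its full $H^{\alpha,\beta}$ weight (and $H^{\alpha,\beta}\subset H^{\alpha,\beta+\delta}$), while the $\gamma_2$ factor, now carrying spare powers $\langle\xi_2+\xi_2'\rangle^{-\alpha}$ and part of $\langle\xi_2-\xi_2'\rangle^{-\delta}$, is estimated by Cauchy--Schwarz in the frequency variables genuinely integrated out. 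Here the hypotheses enter through $\langle\cdot\rangle^{-\alpha}\in L^2(\mathbb{R}^d)$ (valid since $\alpha>\frac{d}{2}$) and $\langle\cdot\rangle^{-(\beta+\delta)}\in L^2(\mathbb{R}^d)$; the output is then bounded by a convolution in the $(\xi+\xi')$--variable, and Young followed by one more Cauchy--Schwarz in the remaining output frequency closes the piece. For $B^-$ this is clean: $A_{\omega}=\mathbb{I}$, so $\xi-\xi'=\xi_1-\xi_1'$ alone, the $(\xi_2-\xi_2')$--integration is fully $d$--dimensional, and the argument in fact goes through with $\delta=0$ (consistently with the loss operator exhibiting no regularization).

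The main obstacle is the gain term $B^+$, where $A_{\omega}=\mathbb{I}-P_{\omega}$ and the relation $(\mathbb{I}-P_{\omega})(\xi_1-\xi_1')+P_{\omega}(\xi_2-\xi_2')=\xi-\xi'$ couples the $(-)$--frequencies of the two inputs through the rank--one projection $P_{\omega}$: once the output and $\omega$ are fixed, only the $1$--dimensional piece $P_{\omega}(\xi_1-\xi_1')$ and the $(d-1)$--dimensional piece $(\mathbb{I}-P_{\omega})(\xi_2-\xi_2')$ of the inputs' $(-)$--frequencies remain free, and a naive Cauchy--Schwarz over the degenerate low-dimensional slice fails to be summable for small $\delta$. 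The resolution, which is the step adapted from Theorem~4.3 of \cite{PC2011} and which parallels the gain-term analysis behind Proposition~\ref{prop:KeyProp}, is to retain the angular integral: performing the $\omega$--average over $\mathbb{S}^{d-1}$ (equivalently, after a Littlewood--Paley decomposition, summing a geometric series in the frequency scales with ratio $2^{-c\delta}$ for some $c=c(d)>0$) smooths out the degenerate support, and this series converges precisely because $\delta>0$, producing a constant $C_{\delta}$ that blows up as $\delta\to 0^{+}$. Assembling the two weight-splitting pieces with this angular gain then yields the stated bound for $B^+$, and together with the $B^-$ case completes the proof.
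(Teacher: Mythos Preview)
Your approach is essentially the same as the paper's: take the spatial Fourier transform, bound $\mathbf{b}$ pointwise by $\|\mathbf{b}\|_\infty$, reduce by Cauchy--Schwarz to a weighted integral that is finite precisely when $\alpha,\beta>\frac{d}{2}$, and for the gain term exploit the $\omega$--average via a dyadic decomposition whose resulting geometric series $\sum_k 2^{-2k\delta}$ converges only for $\delta>0$. The paper organizes the Cauchy--Schwarz step slightly differently (multiplying and dividing by the full product of input weights rather than additively splitting $\langle\xi+\xi'\rangle^\alpha$), but the key integrals and the dyadic angular argument are identical in spirit; note only that your parenthetical ``$H^{\alpha,\beta}\subset H^{\alpha,\beta+\delta}$'' has the inclusion reversed.
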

\begin{proof}
We treat the loss term and gain term separately.

\textbf{Loss term.} For any function $F(x,x^\prime)$ we denote the Fourier transform,
\begin{equation}
\hat{F} (\xi,\xi^\prime) = \int dx dx^\prime e^{- i \xi \cdot x}
e^{-i \xi^\prime \cdot x^\prime} F(x,x^\prime)
\end{equation}
A routine computation shows that, if $\left\Vert \mathbf{b} \right\Vert_\infty < \infty$,
then
\begin{equation}
\left| {\left( B^- (\gamma_1,\gamma_2) \right)}^\wedge (\xi,\xi^\prime) \right| \leq
C \left\Vert \mathbf{b} \right\Vert_\infty \int d\eta d\eta^\prime
\left| \hat{\gamma}_1 \left(\xi-\frac{\eta+\eta^\prime}{2}\right)\right|
\left| \hat{\gamma}_2 (\eta,\eta^\prime) \right|
\end{equation}
Therefore we have
\begin{equation}
\begin{aligned}
& \left\Vert B^- (\gamma_1,\gamma_2) \right\Vert_{H^{\alpha,\beta}}^2 =
\int \left| \left( B^- (\gamma_1,\gamma_2) \right)^\wedge (\xi,\xi^\prime)\right|^2
\left< \xi+\xi^\prime \right>^{2\alpha} 
\left< \xi-\xi^\prime \right>^{2\beta} d\xi d\xi^\prime \\
& \leq C^2 \left\Vert \mathbf{b} \right\Vert_\infty^2
\int d\xi d\xi^\prime d\eta_1 d\eta_1^\prime d\eta_2 d\eta_2^\prime
\left< \xi+\xi^\prime \right>^{2\alpha} 
\left< \xi-\xi^\prime \right>^{2\beta} \times \\
&\qquad \qquad \qquad \times \left| \hat{\gamma}_1 \left( \xi-\frac{\eta_1+\eta_1^\prime}{2},
\xi^\prime - \frac{\eta_1+\eta_1^\prime}{2}\right) \right|
\left| \hat{\gamma}_2 (\eta_1,\eta_1^\prime)\right| \times \\
& \qquad \qquad \qquad \times \left| \hat{\gamma}_1 \left( \xi-\frac{\eta_2+\eta_2^\prime}{2},
\xi^\prime - \frac{\eta_2+\eta_2^\prime}{2}\right) \right|
\left| \hat{\gamma}_2 (\eta_2,\eta_2^\prime)\right| 
\end{aligned}
\end{equation}
Now we multiply and divide under the integral sign by the following factor,
\begin{equation}
\prod_{i=1,2} \left\{ \left< \xi+\xi^\prime-\eta_i-\eta_i^\prime \right>^\alpha
\left< \xi-\xi^\prime \right>^\beta
\left< \eta_i+\eta_i^\prime \right>^\alpha
\left< \eta_i-\eta_i^\prime \right>^\beta\right\}
\end{equation}
and apply the Cauchy-Schwarz inequality \emph{pointwise} under the integral. This gives us
\begin{equation}
\begin{aligned}
& \left\Vert B^- (\gamma_1,\gamma_2) \right\Vert_{H^{\alpha,\beta}}^2 \leq
C^2 \left\Vert \mathbf{b} \right\Vert_\infty^2 \int d\xi d\xi^\prime
d\eta_1 d\eta_1^\prime d\eta_2 d\eta_2^\prime \times \\
& \qquad \qquad \qquad \qquad \times \frac{\left< \xi+\xi^\prime\right>^{2\alpha}}
{\left< \xi+\xi^\prime-\eta_2-\eta_2^\prime \right>^{2\alpha}
\left< \eta_2+\eta_2^\prime \right>^{2\alpha}
\left< \eta_2-\eta_2^\prime\right>^{2\beta}} \times \\
& \qquad \times \left| \left< \xi+\xi^\prime-\eta_1-\eta_1^\prime \right>^\alpha
\left< \xi-\xi^\prime \right>^\beta \hat{\gamma}_1 \left(
\xi-\frac{\eta_1+\eta_1^\prime}{2},\xi^\prime-\frac{\eta_1+\eta_1^\prime}{2}\right)\right|^2 \times \\
& \qquad \times \left| \left< \eta_1+\eta_1^\prime \right>^\alpha
\left< \eta_1-\eta_1^\prime \right>^\beta
\hat{\gamma}_2 (\eta_1,\eta_1^\prime) \right|^2
\end{aligned}
\end{equation}
Observe now that if
\begin{equation}
\label{eq:I-loss-bd}
I \equiv \sup_{\xi,\xi^\prime} \int d\eta d\eta^\prime
\frac{\left< \xi+\xi^\prime\right>^{2\alpha}}
{\left< \xi+\xi^\prime-\eta_2-\eta_2^\prime \right>^{2\alpha}
\left< \eta_2+\eta_2^\prime \right>^{2\alpha}
\left< \eta_2-\eta_2^\prime\right>^{2\beta}} < \infty
\end{equation}
then we conclude
\begin{equation}
\left\Vert B^- (\gamma_1,\gamma_2) \right\Vert_{H^{\alpha,\beta}}
\leq C \left\Vert \mathbf{b}\right\Vert_\infty
\left\Vert \gamma_1 \right\Vert_{H^{\alpha,\beta}} 
\left\Vert \gamma_2 \right\Vert_{H^{\alpha,\beta}}
\end{equation}

The bound (\ref{eq:I-loss-bd}) is equivalent to the following estimate:
\begin{equation}
\label{eq:I-loss-bd-2}
\sup_{W \in \mathbb{R}^d} \int_{\mathbb{R}^d} dw 
\frac{\left<W\right>^{2\alpha}}
{\left<W-w\right>^{2\alpha} \left<w \right>^{2\alpha}}
\int_{\mathbb{R}^d} dz \frac{1}{\left< z \right>^{2\beta}} < \infty
\end{equation}
It is easy to verify that the bound (\ref{eq:I-loss-bd-2}) holds whenever
$\alpha,\beta > \frac{d}{2}$.

\textbf{Gain term.} By a routine calculation, if $\left\Vert \mathbf{b}
\right\Vert_\infty < \infty$, we have:
\begin{equation}
\begin{aligned}
& \left| \left( B^+ (\gamma_1,\gamma_2) \right)^\wedge (\xi,\xi^\prime) \right| \leq \\
& \leq C \left\Vert \mathbf{b} \right\Vert_\infty
\int_{\mathbb{S}^{d-1}} d\omega
\int d\eta_1 d\eta_1^\prime d\eta_2 d\eta_2^\prime
\hat{\gamma}_1 (\eta_1,\eta_1^\prime) \hat{\gamma}_2 (\eta_2,\eta_2^\prime) \times \\
& \times \delta \left( -\xi + \eta_1 + \frac{\eta_2 + \eta_2^\prime}{2}
- \frac{1}{2} P_\omega (\eta_1-\eta_1^\prime) 
+ \frac{1}{2} P_\omega (\eta_2-\eta_2^\prime) \right) \times \\
& \times \delta \left( -\xi^\prime + \eta_1^\prime + \frac{\eta_2+\eta_2^\prime}{2}
+\frac{1}{2} P_\omega (\eta_1-\eta_1^\prime) - \frac{1}{2} P_\omega (\eta_2-\eta_2^\prime)
\right)
\end{aligned}
\end{equation}
Performing changes of variables as in \cite{CDP2017}, this gives:
\begin{equation}
\begin{aligned}
& \left| \left( B^+ (\gamma_1,\gamma_2) \right)^\wedge (\xi,\xi^\prime) \right| \leq \\
& \qquad \leq C \left\Vert \mathbf{b} \right\Vert_\infty
\int_{\mathbb{S}^{d-1}} d\omega
\int ds_1 ds_2 \times \\
& \qquad \qquad \times \hat{\gamma}_1 \left(
s_1 + 2 s_2^\Vert + \frac{3\xi-\xi^\prime}{4},
s_1-2 s_2^\Vert + \frac{3\xi^\prime-\xi}{4}\right)\times \\
& \qquad \qquad \times \hat{\gamma}_2 \left( -s_1-2s_2^\bot + \frac{3\xi-\xi^\prime}{4},
-s_1+2s_2^\bot + \frac{3\xi^\prime-\xi}{4} \right)
\end{aligned}
\end{equation}
where $s_2^\Vert = P_\omega (s_2)$ and $s_2^\bot = (\mathbb{I}-P_\omega)(s_2)$.

Reasoning as for the loss term, if we can show that the integral
\begin{equation}
\begin{aligned}
& \int_{\mathbb{S}^{d-1}} d\omega \int_{\mathbb{R}^d \times \mathbb{R}^d}
 ds_1 ds_2 \times \\
& \times
\frac{\left<\xi+\xi^\prime\right>^{2\alpha} \left<\xi-\xi^\prime\right>^{2\beta}}
{\left< 2s_1+\frac{\xi+\xi^\prime}{2}\right>^{2\alpha}
\left< 4s_2^\Vert + \xi - \xi^\prime \right>^{2(\beta+\delta)}
\left< -2s_1 + \frac{\xi+\xi^\prime}{2} \right>^{2\alpha}
\left< -4s_2^\bot + \xi-\xi^\prime \right>^{2(\beta+\delta)} }
\end{aligned}
\end{equation}
is bounded uniformly with respect to $\xi,\xi^\prime \in \mathbb{R}^d$, then we will
have the estimate
\begin{equation}
\left\Vert B^+ (\gamma_1,\gamma_2) \right\Vert_{H^{\alpha,\beta}} \leq
C \left\Vert \mathbf{b} \right\Vert_\infty
\left\Vert \gamma_1 \right\Vert_{H^{\alpha,\beta+\delta}}
\left\Vert \gamma_2 \right\Vert_{H^{\alpha,\beta+\delta}}
\end{equation}
In fact it suffices to show the following two bounds:
\begin{equation}
\label{eq:I20}
\sup_{W \in \mathbb{R}^d} \int_{\mathbb{R}^d}
ds \frac{\left< W \right>^{2\alpha}}
{\left< s \right>^{2\alpha} \left< W-s \right>^{2\alpha}} < \infty
\end{equation}
\begin{equation}
\label{eq:I21}
\sup_{W\in \mathbb{R}^d}
\int_{\mathbb{S}^{d-1}} d\omega \int_{\mathbb{R}^d} ds
\frac{\left<W\right>^{2\beta}}
{\left< s^\Vert + W^\bot \right>^{2(\beta+\delta)} 
\left< s^\bot + W^\Vert \right>^{2(\beta+\delta)}}
< \infty
\end{equation}
It is easy to verify that (\ref{eq:I20}) holds whenever $\alpha > \frac{d}{2}$, so
we only have to prove (\ref{eq:I21}).

We easily bound the integral (\ref{eq:I21}) with respect to $s\in\mathbb{R}^d$ when
$\beta > \frac{d-1}{2}$ by decomposing $ds = ds^\Vert ds^\bot$. Then all we must
show is that
\begin{equation}
\sup_{W \in \mathbb{R}^d} \int_{\mathbb{S}^{d-1}} d\omega
\left< W \right>^{2\beta}
\left< W^\Vert \right>^{d-1-2(\beta+\delta)} \left< W^\bot \right>^{1-2(\beta+\delta)}
<\infty
\end{equation}
Now we may decompose
\begin{equation}
\left< W \right>^{2\beta} \lesssim
\left< W^\Vert \right>^{2\beta} + \left< W^\bot \right>^{2\beta}
\end{equation}
Therefore it suffices to bound the following two integrals:
\begin{equation}
I = \int_{\mathbb{S}^{d-1}} d\omega
\left< W^\Vert \right>^{d-1-2\delta} \left< W^\bot \right>^{1-2(\beta+\delta)}
\end{equation}
\begin{equation}
I^\prime = \int_{\mathbb{S}^{d-1}} d\omega
\left< W^\Vert \right>^{d-1-2(\beta+\delta)} \left< W^\bot \right>^{1-2\delta}
\end{equation}
We can bound both integrals using a dyadic decomposition, as in \cite{CDP2017},
whenever $\beta > \frac{d}{2}$, as follows:
\begin{equation}
\begin{aligned}
I & \lesssim \sum_{k=1}^\infty \int_{\omega : 2^{-k-1} |W^\Vert| \leq
|W^\bot| < 2^{-k} |W^\Vert|} d\omega
\left< W^\Vert \right>^{d-1-2\delta} \left< W^\bot\right>^{1-2(\beta+\delta)} \\
& \lesssim \sum_{k=1}^\infty
2^{-k-1} \times (2^{-k})^{d-2} \times
(2^{k+1})^{d-1-2\delta} < \infty 
\end{aligned}
\end{equation}
\begin{equation}
\begin{aligned}
I^\prime & \lesssim \sum_{k=1}^\infty \int_{\omega : 2^{-k-1} |W^\bot| \leq
|W^\Vert| < 2^{-k} |W^\bot|} d\omega
\left< W^\Vert \right>^{d-1-2(\beta+\delta)} \left< W^\bot \right>^{1-2\delta} \\
& \lesssim \sum_{k=1}^\infty 2^{-k-1} \times
(2^{k+1})^{1-2\delta} < \infty
\end{aligned}
\end{equation}
Hence we may conclude.
\end{proof}

\section{Proof of Proposition \ref{prop:KeyProp} when $\delta = 0$}
\label{sec:AppB}

In this appendix we will provide a proof of Proposition \ref{prop:KeyProp};
it is based on bilinear Strichartz estimates, following the strategy
of \cite{KM2008}.
 In fact, we will improve on the
results of \cite{CDP2017} by allowing exponents $\beta > \frac{d-1}{2}$ in the case of
bounded collision kernels (this was claimed without proof in \cite{CDP2017}).
It is straightforward (from the proof in this Appendix) to obtain the claimed
improvement in moments for the \emph{gain term} (only) in Proposition \ref{prop:KeyProp}
(i.e. $\delta > 0$).

The proof presented in this Appendix is adapted from an early manuscript of
\cite{CDP2017}. However, the proof presented here diverges from that of
\cite{CDP2017} in many details;
in particular, only constant or bounded collision kernels are considered here
(with a corresponding improvement in the available range of regularity in the $\beta$ exponent).
In fact we include only the case of \emph{constant} collision kernel in this Appendix;
the only difference with the bounded case is that we bound $\mathbf{b}$ by
its $L^\infty$ norm after passing to the spacetime Fourier transform.

\begin{proof} (case $\delta = 0$)

\textbf{Sobolev Estimates for the Loss Term}

Consider the loss term, which is as follows for a constant
collision kernel:
\begin{equation}
B^- [\gamma_1,\gamma_2](x,x^\prime) =
\gamma_1 (x,x^\prime) 
\gamma_2 \left( \frac{x+x^\prime}{2},\frac{x+x^\prime}{2}\right)
\end{equation}
We will fix some \emph{initial data}  $\gamma_1 (x,x^\prime)$,
$\gamma_2 (x,x^\prime)$,  and
consider the following function (it is the action of the nonlinearity upon
the free Schr{\" o}dinger flow):
\begin{equation}
B^- \left[ e^{\frac{1}{2} i t \left( \Delta_x - \Delta_{x^\prime}\right)}
\gamma_1, e^{\frac{1}{2} i t \left( \Delta_x - \Delta_{x^\prime}\right)}
\gamma_2 \right] (t,x,x^\prime)
\end{equation}
The spacetime Fourier transform of a function $F (t,x,x^\prime)$ is
\begin{equation}
\tilde{F} (\tau,\xi,\xi^\prime) =
\int dt dx dx^\prime e^{-i t \tau}
e^{-i x\cdot \xi} e^{-i x^\prime \cdot \xi^\prime}
F(t,x,x^\prime)
\end{equation}
The spacetime Fourier transform of 
$e^{\frac{1}{2} i t (\Delta_x - \Delta_{x^\prime})} \gamma_0$ is
\begin{equation}
\hat{\gamma_0} ( \xi,\xi^\prime) 
\delta \left( \tau + \frac{1}{2} |\xi|^2 - \frac{1}{2} |\xi^\prime|^2\right)
\end{equation}
where
\begin{equation}
\hat{\gamma}_0 (\xi,\xi^\prime) =
\int  dx dx^\prime
e^{-i x\cdot \xi} e^{-i x^\prime \cdot \xi^\prime}
\gamma_0 (x,x^\prime)
\end{equation}
We also have
\begin{equation}
\begin{aligned}
&\left( B^-\left[ e^{\frac{1}{2} i t \left( \Delta_x - \Delta_{x^\prime}
\right)} \gamma_1, e^{\frac{1}{2} i t 
\left( \Delta_x - \Delta_{x^\prime}\right)}
\gamma_2 \right] \right)^{\sim} (\tau,\xi,\xi^\prime) = \\
& = \int d\eta d\eta^\prime
\delta \left( \tau + \frac{1}{2} \left| \xi - \frac{\eta + \eta^\prime}{2}
\right|^2 - \frac{1}{2} \left| \xi^\prime - 
\frac{\eta + \eta^\prime}{2}\right|^2 +
\frac{1}{2} |\eta|^2  - \frac{1}{2} |\eta^\prime|^2 \right)\times \\
& \qquad \qquad\times \hat{\gamma_1} \left(
\xi - \frac{\eta+\eta^\prime}{2},
\xi^\prime - \frac{\eta+\eta^\prime}{2}\right)
\hat{\gamma_2} (\eta,\eta^\prime)
\end{aligned}
\end{equation}

We want to estimate the following integral, for suitable 
$\alpha,\beta > 0$:
\begin{equation}
\begin{aligned}
I_{\alpha,\beta}^- & = \int \left< \xi + \xi^\prime \right>^{2\alpha}
\left< \xi - \xi^\prime \right>^{2\beta}\times \\
& \times 
\left|\left(B^-\left[ e^{\frac{1}{2} i t \left( \Delta_x - \Delta_{x^\prime}\right)}
\gamma_1, e^{\frac{1}{2} i t \left( \Delta_x - \Delta_{x^\prime}\right)}
\gamma_2 \right] (\tau,\xi,\xi^\prime) \right)^\sim \right|^2 d\tau d\xi d\xi^\prime
\end{aligned}
\end{equation}
To start, observe that
\begin{equation*}
\begin{aligned}
& I_{\alpha,\beta}^- \leq \int d\tau d\xi d\xi^\prime 
d\eta_1 d\eta_1^\prime d\eta_2 d\eta_2^\prime 
\left< \xi+\xi^\prime \right>^{2\alpha}
\left< \xi-\xi^\prime \right>^{2\beta} \times \\
& \times \delta \left( \tau + \frac{1}{2} \left| \xi -
\frac{\eta_1+\eta_1^\prime}{2}\right|^2 - \frac{1}{2}
\left| \xi^\prime - \frac{\eta_1+\eta_1^\prime}{2}\right|^2
+\frac{1}{2}\left|\eta_1\right|^2 - \frac{1}{2}\left|\eta_1^\prime\right|^2
\right)\times \\
& \times \delta \left(  \tau + \frac{1}{2} \left| \xi -
\frac{\eta_2+\eta_2^\prime}{2}\right|^2 - \frac{1}{2}
\left| \xi^\prime - \frac{\eta_2+\eta_2^\prime}{2}\right|^2
+\frac{1}{2}\left|\eta_2\right|^2 - \frac{1}{2}\left|\eta_2^\prime\right|^2
\right)\times \\
& \times \left| \hat{\gamma_1}\left(\xi-\frac{\eta_1+\eta_1^\prime}{2},
\xi^\prime-\frac{\eta_1+\eta_1^\prime}{2}\right) \right|
\left| \hat{\gamma_2}(\eta_1,\eta_1^\prime)\right| \times\\
& \times \left| \hat{\gamma_1}\left(\xi-\frac{\eta_2+\eta_2^\prime}{2},
\xi^\prime-\frac{\eta_2+\eta_2^\prime}{2}\right) \right|
\left| \hat{\gamma_2}(\eta_2,\eta_2^\prime)\right| \\
\end{aligned}
\end{equation*}
Multiply and divide the integrand by the following factor:
\begin{equation}
\prod_{j=1}^2 \left\{
\left< \xi+\xi^\prime-\eta_j-\eta_j^\prime \right>^\alpha
\left< \xi-\xi^\prime \right>^\beta
\left<\eta_j+\eta_j^\prime\right>^\alpha 
\left<\eta_j-\eta_j^\prime \right>^\beta
\right\}
\end{equation}
Then group terms together and apply Cauchy-Schwarz \emph{pointwise}
under the integral sign. We obtain two different terms that are equal
due to symmetry under re-labeling coordinates; hence,
\begin{equation*}
\begin{aligned}
& I_{\alpha,\beta}^- \leq \int d\tau d\xi d\xi^\prime 
d\eta_1 d\eta_1^\prime d\eta_2 d\eta_2^\prime \times \\
& \times \frac{\left<\xi+\xi^\prime\right>^{2\alpha} 
\left<\xi-\xi^\prime\right>^{2\beta}}
{\left<\xi+\xi^\prime-\eta_1-\eta_1^\prime\right>^{2\alpha}
\left<\xi-\xi^\prime\right>^{2\beta}
\left<\eta_1+\eta_1^\prime\right>^{2\alpha} 
\left<\eta_1-\eta_1^\prime\right>^{2\beta}} \times \\
& \times \delta \left( \tau + \frac{1}{2} \left| \xi -
\frac{\eta_1+\eta_1^\prime}{2}\right|^2 - \frac{1}{2}
\left| \xi^\prime - \frac{\eta_1+\eta_1^\prime}{2}\right|^2
+\frac{1}{2}\left|\eta_1\right|^2 - \frac{1}{2}\left|\eta_1^\prime\right|^2
\right)\times \\
& \times \delta \left(  \tau + \frac{1}{2} \left| \xi -
\frac{\eta_2+\eta_2^\prime}{2}\right|^2 - \frac{1}{2}
\left| \xi^\prime - \frac{\eta_2+\eta_2^\prime}{2}\right|^2
+\frac{1}{2}\left|\eta_2\right|^2 - \frac{1}{2}\left|\eta_2^\prime\right|^2
\right)\times \\
& \times \left| \left< \xi+\xi^\prime-\eta_2-\eta_2^\prime\right>^\alpha
\left< \xi-\xi^\prime\right>^{\beta}
  \hat{\gamma_1}
\left(\xi-\frac{\eta_2+\eta_2^\prime}{2},
\xi^\prime-\frac{\eta_2+\eta_2^\prime}{2}\right) \right|^2\times \\
& \times \left| \left< \eta_2+\eta_2^\prime\right>^{\alpha}
\left< \eta_2 - \eta_2^\prime\right>^{\beta}
\hat{\gamma_2} (\eta_2,\eta_2^\prime)\right|^2 \\
\end{aligned}
\end{equation*}
The integral completely factorizes in the following way:
\begin{equation*}
\begin{aligned}
I_\alpha^- & \leq \int d\tau d\xi d\xi^\prime
\left( \int d\eta_1 d\eta_1^\prime  \dots\right)
\left( \int d\eta_2 d\eta_2^\prime \dots \right) \\
& \leq \left( \sup_{\tau,\xi,\xi^\prime} \int d\eta_1 d\eta_1^\prime
\dots \right) 
\times \int d\tau d\xi d\xi^\prime \left(
\int d\eta_2 d\eta_2^\prime \dots \right)
\end{aligned}
\end{equation*}
Finally we are able to conclude that if the following integral,
\begin{equation}
\label{eq:loss-integral}
\begin{aligned}
& \int d\eta d\eta^\prime \delta \left(\tau + 
\frac{1}{2} \left| \xi - \frac{\eta+\eta^\prime}{2}\right|^2 -
\frac{1}{2} \left| \xi^\prime - \frac{\eta+\eta^\prime}{2}\right|^2
+ \frac{1}{2} |\eta|^2 - \frac{1}{2} |\eta^\prime|^2 \right)\times \\
& \qquad \qquad \qquad \qquad \qquad \qquad \qquad
\times \frac{\left<\xi+\xi^\prime\right>^{2\alpha} }
{\left<\xi+\xi^\prime-\eta-\eta^\prime\right>^{2\alpha}
\left< \eta +\eta^\prime\right>^{2\alpha} 
\left<\eta-\eta^\prime\right>^{2\beta}}
\end{aligned}
\end{equation}
is bounded \emph{uniformly} with respect to $\tau, \xi, \xi^\prime$, then
the following estimate holds:
\begin{equation}
\left\Vert B^- \left[ 
e^{\frac{1}{2} i t \left( \Delta_x - \Delta_{x^\prime}\right)}
\gamma_1, e^{\frac{1}{2} i t \left( \Delta_x - \Delta_{x^\prime}\right)}
\gamma_2 \right] \right\Vert_{L^2_t 
H^{\alpha,\beta}} \leq
C \prod_{j=1,2} \left\Vert \gamma_j \right\Vert_{H^{\alpha,\beta}}
\end{equation}

Let us make the change of variables $w=\frac{\eta+\eta^\prime}{2}$,
$z=\frac{\eta-\eta^\prime}{2}$ in (\ref{eq:loss-integral}); then,
up to a constant, the integral becomes:
\begin{equation}
\label{eq:loss-integral-2}
\begin{aligned}
& \int dw dz \delta \left(\tau + 
\frac{1}{2} \left| \xi - w\right|^2 -
\frac{1}{2} \left| \xi^\prime - w\right|^2
+ \frac{1}{2} |w+z|^2 - \frac{1}{2} |w-z|^2 \right)\times \\
& \qquad \qquad \qquad \qquad \qquad \qquad
\times \frac{\left<\xi+\xi^\prime\right>^{2\alpha} }
{\left<\xi+\xi^\prime-2w\right>^{2\alpha}
\left< 2w \right>^{2\alpha} \left<2z\right>^{2\beta}}
\end{aligned}
\end{equation}
This is the same as:
\begin{equation}
\label{eq:loss-integral-2}
\begin{aligned}
&K = \int dw dz \delta \left(\tau + 
\frac{1}{2} \left( |\xi|^2 - |\xi^\prime|^2\right) -
\left( \xi - \xi^\prime - 2z\right)\cdot w \right)\times \\
& \qquad \qquad \qquad \qquad 
\times \frac{\left<\xi+\xi^\prime\right>^{2\alpha} }
{\left<\xi+\xi^\prime-2w\right>^{2\alpha}
\left< 2w \right>^{2\alpha} \left<2z\right>^{2\beta}}
\end{aligned}
\end{equation}
Hence, one way to parametrize the integral is to let $z\in\mathbb{R}^d$
be arbitrary and let $w$ range over a codimension one hyperplane
in $\mathbb{R}^d$; the hyperplane is determined by $\tau,\xi,\xi^\prime,z$.
Alternatively, we can let $w\in\mathbb{R}^d$ be arbitrary and let
$z$ range over a different codimension one hyperplane in $\mathbb{R}^d$.
We will choose the second option.

We have
\begin{equation}
\begin{aligned}
K = \left< \xi +\xi^\prime\right>^{2\alpha} 
\int_{\mathbb{R}^d} \frac{dw}{2|w| 
\left<\xi+\xi^\prime-2w\right>^{2\alpha}
\left<2w\right>^{2\alpha}}
\int_P \frac{dS(z)}{\left< 2z \right>^{2\beta}}
\end{aligned}
\end{equation}
where
\begin{equation}
P = \left\{ z \in \mathbb{R}^d \left|
\tau + \frac{1}{2} \left( |\xi|^2-|\xi^\prime|^2 \right)
- \left( \xi-\xi^\prime\right)\cdot w + 2w\cdot z = 0\right. \right\}
\end{equation}
and $dS (z)$ is the surface measure on $P$.

The integral over $P$ is no larger than the integral over a 
parallel hyperplane running through the origin, for which the
evaluation is very easy. We find that as long as $\beta > \frac{d-1}{2}$
the integral over $P$ converges, uniformly in $w,\xi,\xi^\prime,\tau$.
Hence we are left with
\begin{equation}
\begin{aligned}
K \lesssim \left< \xi +\xi^\prime\right>^{2\alpha} 
\int_{\mathbb{R}^d} \frac{dw}{2|w| 
\left<\xi+\xi^\prime-2w\right>^{2\alpha}
\left<2w\right>^{2\alpha}}
\end{aligned}
\end{equation}
The integral over the set $\left| w \right| \leq 1$ is trivially
bounded uniformly in $\xi,\xi^\prime$. Therefore the boundedness
of $K$ uniformly in $\xi,\xi^\prime$ is equivalent to the boundedness
of the follwing integral
\begin{equation}
K^\prime = \left< W \right>^{2\alpha} \int_{\mathbb{R}^d}
\frac{dw}{\left< w \right>^{2\alpha+1} \left< W-w \right>^{2\alpha}}
\end{equation}
uniformly in $W$.

The integral over $|w| < \frac{1}{2} |W|$ is trivially bounded uniformly
in $W$ if $\alpha > \frac{d-1}{2}$. Similarly the integral over
$|w|> 2|W|$ is bounded uniformly in $W$ if $\alpha > \frac{d-1}{2}$.
Hence we are left with the following integral:
\begin{equation}
 \left< W \right>^{2\alpha} \int_{\frac{1}{2} |W| \leq |w| \leq 2|W|}
\frac{dw}{\left< w \right>^{2\alpha+1} \left< W-w \right>^{2\alpha}}
\end{equation}
which is obviously bounded by the following integral:
\begin{equation}
\left< W \right>^{-1} \int_{\frac{1}{2} |W| \leq |w| \leq 2|W|}
\frac{dw}{\left< W-w\right>^{2\alpha}}
\end{equation}
Shifting $w$ to $W-w$ this is bounded by
\begin{equation}
\left< W \right>^{-1} \int_{|w| \leq 3|W|}
\frac{dw}{\left< w \right>^{2\alpha}}
\end{equation}
or even
\begin{equation}
\left< W \right>^{d-1} \int_{|u|\leq 3}
\frac{du}{\left< |W| u \right>^{2\alpha}}
\end{equation}
Splitting the last integral into the pieces $|u| \leq \frac{1}{|W|}$
and $\frac{1}{|W|} \leq |u| \leq 3$, we find that the integral $K^\prime$
is uniformly bounded in $W$ if $\alpha > \frac{d-1}{2}$.

Summarizing, we have the bound:
\begin{equation}
\left\Vert B^- \left[ 
e^{\frac{1}{2} i t \left( \Delta_x - \Delta_{x^\prime}\right)}
\gamma_1, e^{\frac{1}{2} i t \left( \Delta_x - \Delta_{x^\prime}\right)}
\gamma_2 \right] \right\Vert_{L^2_t 
H^{\alpha,\beta}} \leq
C \prod_{j=1,2} \left\Vert \gamma_j \right\Vert_{H^{\alpha,\beta}}
\end{equation}
as long as $\min (\alpha,\beta) > \frac{d-1}{2}$. The endpoint estimates
are not achieved, with respect to either $\alpha$ or $\beta$, by the
above argument. \\

\textbf{Sobolev Estimates for the Gain Term}

Consider the gain term, which is the following for a constant collision kernel:
\begin{equation}
B^+ [\gamma_1,\gamma_1](t,x,x^\prime) = \int_{\mathbb{S}^{d-1}}d\omega
B_\omega^+ [\gamma,\gamma](t,x,x^\prime)
\end{equation}
where
\begin{equation}
\begin{aligned}
& B_\omega^+ [\gamma_1,\gamma_2] (t,x,x^\prime) = \\
& =  \gamma_1 \left( t,x-\frac{1}{2} P_\omega (x-x^\prime),
x^\prime +\frac{1}{2} P_\omega (x-x^\prime)\right)\times\\
& \qquad \times \gamma_2 \left( t, \frac{x+x^\prime}{2} +
\frac{1}{2} P_\omega (x-x^\prime),\frac{x+x^\prime}{2}
-\frac{1}{2} P_\omega (x-x^\prime)\right)
\end{aligned}
\end{equation}
The spacetime Fourier transform of the function
\begin{equation}
B^+ 
\left[ e^{\frac{1}{2} i t \left( \Delta_x - \Delta_{x^\prime}\right)}
\gamma_1, e^{\frac{1}{2} i t \left( \Delta_x - \Delta_{x^\prime}\right)}
\gamma_2 \right] (t,x,x^\prime)
\end{equation}
is the following, up to a constant:
\begin{equation}
\label{eq:spacetime-fourier}
\begin{aligned}
& \int_{\mathbb{S}^{d-1}} d\omega 
\int d\eta_1 d\eta_1^\prime d\eta_2 d\eta_2^\prime
\delta \left( \tau + \frac{1}{2} |\eta_1|^2 - \frac{1}{2} 
|\eta_1^\prime|^2 + \frac{1}{2} |\eta_2|^2 -
\frac{1}{2} |\eta_2^\prime|^2 \right) \times \\
& \times \delta \left( -\xi + \eta_1 
+ \frac{\eta_2+\eta_2^\prime}{2}
-\frac{1}{2} P_\omega (\eta_1-\eta_1^\prime) 
+\frac{1}{2} P_\omega (\eta_2-\eta_2^\prime)\right)\times\\
& \times \delta \left( -\xi^\prime + \eta_1^\prime +
\frac{\eta_2+\eta_2^\prime}{2}
+ \frac{1}{2} P_\omega (\eta_1-\eta_1^\prime) 
- \frac{1}{2} P_\omega (\eta_2 - \eta_2^\prime)\right) \times \\
& \times \hat{\gamma}_1 (\eta_1,\eta_1^\prime)
\hat{\gamma}_2 (\eta_2,\eta_2^\prime)
\end{aligned}
\end{equation}
Introduce the change of variables $w_1 = \frac{\eta_1+\eta_1^\prime}{2}$,
$z_1 = \frac{\eta_1 - \eta_1^\prime}{2}$, 
$w_2 = \frac{\eta_2 + \eta_2^\prime}{2}$,
$z_2 = \frac{\eta_2-\eta_2^\prime}{2}$.
Then (\ref{eq:spacetime-fourier}) becomes
\begin{equation}
\label{eq:spacetime-fourier-2}
\begin{aligned}
& \int_{\mathbb{S}^{d-1}} d\omega \int dw_1 dz_1 dw_2 dz_2\times \\
&  \times \delta \left( \tau + \frac{1}{2} |w_1+z_1|^2 - \frac{1}{2} 
|w_1-z_1|^2 + \frac{1}{2} |w_2+z_2|^2 -
\frac{1}{2} |w_2-z_2|^2 \right) \times \\
& \times \delta \left( -\xi + w_1+z_1+ w_2
-  P_\omega (z_1-z_2) \right)\times\\
& \times \delta \left( -\xi^\prime + w_1 - z_1 + w_2
+ P_\omega (z_1 -z_2) \right) \times \\
& \times \hat{\gamma}_1 (w_1+z_1,w_1-z_1)
\hat{\gamma}_2 (w_2+z_2,w_2-z_2)
\end{aligned}
\end{equation}
Introduce yet another change of variables
$r_1 = \frac{w_1+w_2}{2}$, $s_1 = \frac{w_1-w_2}{2}$,
$r_2 = \frac{z_1+z_2}{2}$, $s_2 = \frac{z_1-z_2}{2}$.
Then (\ref{eq:spacetime-fourier-2}) becomes
\begin{equation}
\begin{aligned}
& \int_{\mathbb{S}^{d-1}} d\omega \int dr_1 ds_1 dr_2 ds_2 \times \\
&  \times \delta \left( \tau + \frac{1}{2} |r_1+s_1+r_2+s_2|^2 - \frac{1}{2} 
|r_1+s_1-r_2-s_2|^2 + \right. \\
& \left. \qquad + \frac{1}{2} |r_1-s_1+r_2-s_2|^2 -
\frac{1}{2} |r_1-s_1-r_2+s_2|^2 \right) \times \\
& \times \delta \left( -\xi + 2r_1 + r_2 + s_2
-  2 P_\omega s_2 \right)\times\\
& \times \delta \left( -\xi^\prime + 2r_1 - r_2 - s_2
+ 2 P_\omega s_2  \right) \times \\
& \times \hat{\gamma}_1 (r_1+s_1+r_2+s_2,r_1+s_1-r_2-s_2)\times \\
& \times \hat{\gamma}_2 (r_1-s_1+r_2-s_2,r_1-s_1-r_2+s_2)
\end{aligned}
\end{equation}
Replace $r_1$ with $\frac{r_1}{2}$ throughout:
\begin{equation}
\begin{aligned}
& \int_{\mathbb{S}^{d-1}} d\omega \int dr_1 ds_1 dr_2 ds_2 \times \\
&  \times \delta \left( \tau + \frac{1}{2} 
\left|\frac{r_1}{2}+s_1+r_2+s_2\right|^2 - 
\frac{1}{2} 
\left|\frac{r_1}{2}+s_1-r_2-s_2\right|^2 + \right. \\
& \left. \qquad + \frac{1}{2} \left|\frac{r_1}{2}-s_1+r_2-s_2\right|^2 -
\frac{1}{2} \left|\frac{r_1}{2}-s_1-r_2+s_2\right|^2 \right) \times \\
& \times \delta \left( -\xi + r_1 + r_2 + s_2
-  2 P_\omega s_2 \right)\times\\
& \times \delta \left( -\xi^\prime + r_1 - r_2 - s_2
+ 2 P_\omega s_2  \right) \times \\
& \times \hat{\gamma}_1 \left(\frac{r_1}{2}+s_1+r_2+s_2,
\frac{r_1}{2}+s_1-r_2-s_2\right)\times \\
& \times \hat{\gamma}_2 \left(\frac{r_1}{2}-s_1+r_2-s_2,
\frac{r_1}{2}-s_1-r_2+s_2\right)
\end{aligned}
\end{equation}
Finally perform the change of variables
$\zeta_1 = r_1+r_2$, $\zeta_2 = r_1-r_2$:
\begin{equation}
\begin{aligned}
& \int_{\mathbb{S}^{d-1}} d\omega \int d\zeta_1 d\zeta_2 ds_1 ds_2 \times \\
&  \times \delta \left( \tau + \frac{1}{2} 
\left|\frac{3\zeta_1}{4}-\frac{\zeta_2}{4}+s_1+s_2\right|^2 - 
\frac{1}{2} 
\left|-\frac{\zeta_1}{4}+\frac{3\zeta_2}{4}+s_1-s_2\right|^2 + \right. \\
& \left. \qquad + \frac{1}{2} 
\left|\frac{3\zeta_1}{4}-\frac{\zeta_2}{4}-s_1-s_2\right|^2 -
\frac{1}{2} \left|-\frac{\zeta_1}{4}+\frac{3\zeta_2}{4}
-s_1+s_2\right|^2 \right) \times \\
& \times \delta \left( -\xi + \zeta_1 + s_2
-  2 P_\omega s_2 \right)\times\\
& \times \delta \left( -\xi^\prime + \zeta_2 - s_2
+ 2 P_\omega s_2  \right) \times \\
& \times \hat{\gamma}_1 \left(\frac{3\zeta_1}{4}-\frac{\zeta_2}{4}+s_1+s_2,
-\frac{\zeta_1}{4}+\frac{3\zeta_2}{4}+s_1-s_2\right)\times \\
& \times \hat{\gamma}_2
\left(\frac{3\zeta_1}{4}-\frac{\zeta_2}{4}-s_1-s_2,
-\frac{\zeta_1}{4}+\frac{3\zeta_2}{4}-s_1+s_2\right)
\end{aligned}
\end{equation}
Now we can integrate out the variables $\zeta_1,\zeta_2$ to obtain:
\begin{equation}
\begin{aligned}
& \int_{\mathbb{S}^{d-1}} d\omega \int ds_1 ds_2 \times \\
&  \times \delta \left( \tau + \frac{1}{2} 
\left|s_1 + 2  s_2^{\Vert} + \frac{3\xi-\xi^\prime}{4}
\right|^2 - 
\frac{1}{2} 
\left|s_1-2 s_2^{\Vert} + \frac{3\xi^\prime-\xi}{4}
\right|^2 + \right. \\
& \left. \qquad \qquad + \frac{1}{2} 
\left|-s_1 - 2 s_2^{\bot} +
\frac{3\xi-\xi^\prime}{4}\right|^2 -
\frac{1}{2} \left|
-s_1 + 2 s_2^{\bot} + 
\frac{3\xi^\prime-\xi}{4} \right|^2 \right) \times \\
& \times \hat{\gamma}_1 
\left(s_1 + 2 s_2^{\Vert} + \frac{3\xi-\xi^\prime}{4},
s_1-2 s_2^{\Vert} + \frac{3\xi^\prime-\xi}{4}
\right)\times \\
& \times \hat{\gamma}_2 \left(-s_1 - 2 s_2^{\bot}+ \frac{3\xi-\xi^\prime}{4},
-s_1 + 2 s_2^{\bot} +  \frac{3\xi^\prime-\xi}{4} \right) 
\end{aligned}
\end{equation}
where $s_2^{\Vert} = P_\omega s_2$ and
$s_2^{\bot} = s_2 - P_\omega s_2$.

We want to estimate the following integral, for suitable $\alpha,\beta > 0$:
\begin{equation}
\begin{aligned}
I_{\alpha,\beta}^+ & = \int \left< \xi+\xi^\prime \right>^{2\alpha}
\left< \xi- \xi^\prime \right>^{2\beta}\times \\
& \times  \left|\left(B^+
\left[ e^{\frac{1}{2} i t \left( \Delta_x - \Delta_{x^\prime}\right)}
\gamma_1, e^{\frac{1}{2} i t \left( \Delta_x - \Delta_{x^\prime}\right)}
\gamma_2 \right] (\tau,\xi,\xi^\prime)\right)^\sim \right|^2 d\tau d\xi d\xi^\prime
\end{aligned}
\end{equation}
Reasoning as for the loss term, if we can show that the following integral
\begin{equation}
\label{eq:gain-int}
\begin{aligned}
& \int_{\mathbb{S}^{d-1}} d\omega \int ds_1 ds_2 \times \\
&  \times \delta \left( \tau + \frac{1}{2} 
\left|s_1 + 2  s_2^{\Vert} + \frac{3\xi-\xi^\prime}{4}
\right|^2 - 
\frac{1}{2} 
\left|s_1-2 s_2^{\Vert} + \frac{3\xi^\prime-\xi}{4}
\right|^2 + \right. \\
& \left. \qquad \qquad + \frac{1}{2} 
\left|-s_1 - 2 s_2^{\bot} +
\frac{3\xi-\xi^\prime}{4}\right|^2 -
\frac{1}{2} \left|
-s_1 + 2 s_2^{\bot} + 
\frac{3\xi^\prime-\xi}{4} \right|^2 \right) \times \\
& \times \frac{\left<\xi+\xi^\prime\right>^{2\alpha} 
\left<\xi-\xi^\prime\right>^{2\beta}}
{\left<2s_1+\frac{\xi+\xi^\prime}{2}\right>^{2\alpha}
\left<4s_2^{\Vert} + \xi - \xi^\prime\right>^{2\beta}
\left< -2s_1 + \frac{\xi+\xi^\prime}{2}\right>^{2\alpha}
\left< -4s_2^{\bot}+\xi-\xi^\prime\right>^{2\beta}}
\end{aligned}
\end{equation}
is bounded uniformly in $\tau,\xi,\xi^\prime$, then we will have the
following estimate:
\begin{equation}
\left\Vert B^+ \left[ 
e^{\frac{1}{2} i t \left( \Delta_x - \Delta_{x^\prime}\right)}
\gamma_1, e^{\frac{1}{2} i t \left( \Delta_x - \Delta_{x^\prime}\right)}
\gamma_2 \right] \right\Vert_{L^2_t 
H^{\alpha,\beta}} \leq
C \prod_{j=1,2}
\left\Vert \gamma_j \right\Vert_{H^{\alpha,\beta}}
\end{equation}

The integral (\ref{eq:gain-int}) is equivalent to the following integral:
\begin{equation}
\begin{aligned}
& \int_{\mathbb{S}^{d-1}} d\omega \int ds_1 ds_2\times \\
& \times \delta \left( \tau + \frac{1}{2} \left( |\xi|^2 - |\xi^\prime|^2
\right) +
\left( 4 s_1 - R_\omega ( \xi+\xi^\prime)\right)\cdot s_2 \right) \times \\
& \times  \frac{\left<\xi+\xi^\prime\right>^{2\alpha} 
\left<\xi-\xi^\prime\right>^{2\beta}}
{\left<2s_1+\frac{\xi+\xi^\prime}{2}\right>^{2\alpha}
\left<4s_2^{\Vert} + \xi - \xi^\prime\right>^{2\beta}
\left< -2s_1 + \frac{\xi+\xi^\prime}{2}\right>^{2\alpha}
\left< -4s_2^{\bot}+\xi-\xi^\prime\right>^{2\beta}}
\end{aligned}
\end{equation}
where $R_\omega (u) = u - 2 P_\omega u$ is reflection about
the plane perpendicular to $\omega$. This is in turn equivalent to the
following integral:
\begin{equation}
\begin{aligned}
& \int_{\mathbb{S}^{d-1}} d\omega \int ds_2
\frac{\left<\xi-\xi^\prime\right>^{2\beta}}
{\left| 4 s_2\right|
\left< 4s_2^{\Vert}+\xi-\xi^\prime  \right>^{2\beta}
\left< -4s_2^{\bot}+\xi-\xi^\prime  \right>^{2\beta}}\times \\
& \qquad \qquad \qquad \qquad\qquad
\times \int_P dS(s_1) \frac{\left<\xi+\xi^\prime\right>^{2\alpha}}
{\left< 2s_1 + \frac{\xi+\xi^\prime}{2} \right>^{2\alpha}
\left< -2s_1 + \frac{\xi+\xi^\prime}{2}  \right>^{2\alpha}}
\end{aligned}
\end{equation}
where $P \subset \mathbb{R}^d$ is the following codimension one hyperplane:
\begin{equation}
P = \left\{ s_1 \in \mathbb{R}^d \left|
\tau + \frac{1}{2} \left( |\xi|^2-|\xi^\prime|^2\right) +
\left( 4s_1-R_\omega (\xi+\xi^\prime)\right)\cdot s_2 = 0\right.\right\}
\end{equation}
Therefore we only need to show the boundedness of the following two
quantities uniformly in $\xi,\xi^\prime,\tau$:
\begin{equation}
I_1 = \sup_{P \subset \mathbb{R}^d : \dim P = d-1}
\int_P dS(s) \frac{\left<\xi+\xi^\prime\right>^{2\alpha}}
{\left<2s+\frac{\xi+\xi^\prime}{2}\right>^{2\alpha}
\left<-2s+\frac{\xi+\xi^\prime}{2}\right>^{2\alpha}}
\end{equation}
\begin{equation}
I_2 = \int_{\mathbb{S}^{d-1}} d\omega \int_{\mathbb{R}^d} ds
\frac{\left<\xi-\xi^\prime\right>^{2\beta}}
{|4s|\left<4s^{\Vert}+\xi-\xi^\prime\right>^{2\beta}
\left<-4s^{\bot}+\xi-\xi^\prime\right>^{2\beta}}
\end{equation}

Let us first consider the integral $I_2$; here we will assume
that $\beta > \frac{d-1}{2}$. Clearly,
 $I_2$ is equivalent to the following quantity:
\begin{equation}
I_2 \lesssim \int_{\mathbb{S}^{d-1}} d\omega \int_{\mathbb{R}^d} ds
\frac{\left<\xi-\xi^\prime\right>^{2\beta}}
{|s|\left<s^{\Vert}+\xi-\xi^\prime\right>^{2\beta}
\left<s^{\bot}+ \xi-\xi^\prime \right>^{2\beta}}
\end{equation}
Setting $W = \xi - \xi^\prime$, this gives:
\begin{equation}
I_2 \lesssim \int_{\mathbb{S}^{d-1}} d\omega \int_{\mathbb{R}^d} ds
\frac{\left<W\right>^{2\beta}}
{|s|\left<s^{\Vert}+W\right>^{2\beta}
\left<s^{\bot}+W\right>^{2\beta}}
\end{equation}
Moreover, since the integral for $|s|\leq 1$ is obviously uniformly
bounded in $W$, we may instead bound the following integral:
\begin{equation}
I_2^\prime \lesssim \int_{\mathbb{S}^{d-1}} d\omega \int_{\mathbb{R}^d} ds
\frac{\left<W\right>^{2\beta}}
{\left<s\right>\left<s^{\Vert}+W\right>^{2\beta}
\left<s^{\bot}+W\right>^{2\beta}}
\end{equation}
Since $|s^\Vert| \leq |s|$ we have:
\begin{equation}
I_2^\prime \lesssim \int_{\mathbb{S}^{d-1}} d\omega \int_{\mathbb{R}^d} ds
\frac{\left<W\right>^{2\beta}}
{\left<s^\Vert\right>\left<s^{\Vert}+W\right>^{2\beta}
\left<s^{\bot}+W\right>^{2\beta}}
\end{equation}
Therefore, for all large enough $|W|$,
\begin{equation}
\begin{aligned}
I_2^\prime & \lesssim \int_{\mathbb{S}^{d-1}} d\omega \int_{\mathbb{R}^d} ds
\frac{\left<W\right>^{2\beta}}
{\left< s^\Vert \right>
\left<s^{\Vert}+W\right>^{2\beta}
\left<s^{\bot}+W\right>^{2\beta}} \\
& = \int_{\mathbb{S}^{d-1}} d\omega \left< W \right>^{2\beta}
\left( \int \frac{ds^\Vert}
{\left< s^\Vert \right>
\left< s^\Vert + W \right>^{2\beta}}\right)
\left( \int \frac{ds^\bot}
{\left< s^\bot + W \right>^{2\beta}}\right) \\
& \lesssim \int_{\mathbb{S}^{d-1}} d\omega
\left< W \right>^{2\beta} \left(
\left< W\right>^{-1} \left<W^\bot\right>^{1-2\beta}
\log \left<W\right> \right)
\left( \left<W^\Vert\right>^{d-1-2\beta}\right)\\
\end{aligned}
\end{equation}
Note that the integral over $s^\bot$ is estimated by a trivial
computation, whereas the integral over $s^\Vert$ may be estimated
by considering separately the regions $|s^\Vert| \leq \frac{1}{2} |W|$,
$|s^\Vert| > 2|W|$, and $\frac{1}{2} |W| < |s^\Vert| \leq 2|W|$. The
integral over $|s^\Vert| \leq \frac{1}{2} |W|$ yields the logarithmic
divergence, whereas the integral over
$\frac{1}{2} |W| < |s^\Vert| \leq 2|W|$ provides the explicit
dependence on $W^\bot$.

We find that, for any fixed $\varepsilon > 0$, $I_2^\prime$ obeys the following
estimate:
\begin{equation}
I_2^\prime \lesssim
 \int_{\mathbb{S}^{d-1}} d\omega
\left< W \right>^{2\beta-1+\varepsilon}
 \left<W^\bot\right>^{1-2\beta}
\left<W^\Vert\right>^{d-1-2\beta}
\end{equation}
Then we have
\begin{equation}
\left< W \right>^{2\beta-1+\varepsilon} \lesssim
\left< W^\Vert \right>^{2\beta-1+\varepsilon} +
\left< W^\bot \right>^{2\beta-1+\varepsilon}
\end{equation}
Hence $I_2^\prime \lesssim I_2^{\prime \prime} + 
I_2^{\prime \prime \prime}$ where
\begin{equation}
I_2^{\prime \prime} = \int_{\mathbb{S}^{d-1}} d\omega
\left< W^\bot\right>^{1-2\beta}
\left< W^\Vert\right>^{d-2+\varepsilon}
\end{equation}
\begin{equation}
I_2^{\prime \prime \prime} = \int_{\mathbb{S}^{d-1}}
d\omega \left< W^\bot\right>^{\varepsilon}
\left< W^\Vert \right>^{d-1-2\beta}
\end{equation}
Then for any $\beta > \frac{d-1}{2}$, for some sufficiently small
$\varepsilon > 0$ depending on $\beta$, both $I_2^{\prime \prime}$
and $I_2^{\prime \prime \prime}$ may be bounded using dyadic
decompositions in the angular parameter $\omega$, as follows:
neglecting additive constants,
\begin{equation}
\begin{aligned}
I_2^{\prime \prime} & \lesssim \sum_{k=1}^\infty
\int_{\omega : 2^{-k-1} |W^\Vert| \leq |W^\bot| < 2^{-k} |W^\Vert|}
d\omega \left< W^\bot\right>^{1-2\beta}
\left< W^\Vert\right>^{d-2+\varepsilon} \\
& \lesssim \sum_{k=1}^\infty 2^{-k-1} \times
(2^{-k})^{d-2} \times (2^{k+1})^{d-2+\varepsilon} <\infty
\end{aligned}
\end{equation}
\begin{equation}
\begin{aligned}
I_2^{\prime \prime \prime} & \lesssim \sum_{k=1}^\infty
\int_{\omega : 2^{-k-1} |W^\bot| \leq |W^\Vert| < 2^{-k} |W^\bot|}
d\omega \left< W^\bot\right>^{\varepsilon}
\left< W^\Vert\right>^{d-1-2\beta} \\
& \lesssim \sum_{k=1}^\infty 2^{-k-1} \times (2^{k+1})^{\varepsilon} <\infty
\end{aligned}
\end{equation}
The factor of $(2^{-k})^{d-2}$ in $I_2^{\prime \prime}$ comes from
the Jacobian for spherical coordinates in $\mathbb{R}^d$.

We now turn to $I_1$, which is clearly bounded by the following
quantity:
\begin{equation}
I_1 \lesssim \sup_{W\in\mathbb{R}^d} 
\sup_{P\subset \mathbb{R}^d : \dim P = d-1}
\int_{P} dS(s) \frac{\left<W\right>^{2\alpha}}
{\left<s\right>^{2\alpha} \left<s+W\right>^{2\alpha}}
\end{equation}
The integrals over $P\cap \left\{ |s|<\frac{1}{2} |W|\right\}$,
 $P\cap \left\{ |s| > 2 |W|\right\}$, and
$P \cap \left\{ \frac{1}{2} |W| \leq |s| \leq 2|W|\right\}$
 are each easily bounded uniformly
in $W$ as long as $\alpha > \frac{d-1}{2}$.

To conclude, for any parameters $\alpha,\beta$ such that
$\min (\alpha,\beta) > \frac{d-1}{2}$, we have the following estimate:
\begin{equation}
\left\Vert B^+ \left[ 
e^{\frac{1}{2} i t \left( \Delta_x - \Delta_{x^\prime}\right)}
\gamma_1, e^{\frac{1}{2} i t \left( \Delta_x - \Delta_{x^\prime}\right)}
\gamma_2 \right] \right\Vert_{L^2_t 
H^{\alpha,\beta}} \leq
C \prod_{j=1,2}
\left\Vert \gamma_j \right\Vert_{H^{\alpha,\beta}}
\end{equation}
The endpoint estimates are not achieved, with respect to either
$\alpha$ or $\beta$, by the above argument.

\end{proof}


\begin{bibdiv}
\begin{biblist}

\bib{AMUXY2011}{article}{
      author={Alexandre, R.},
      author={Morimoto, Y.},
      author={Ukai, S.},
      author={Xu, C.-J.},
      author={Yang, T.},
       title={Global existence and full regularity of the \uppercase{B}oltzmann
  equation without angular cutoff},
        date={2011},
     journal={Comm. Math. Phys.},
      volume={304},
      number={2},
       pages={513\ndash 581},
}

\bib{AMUXY-Loc}{article}{
      author={Alexandre, R.},
      author={Morimoto, Y.},
      author={Ukai, S.},
      author={Xu, C.-J.},
      author={Yang, T.},
       title={Local existence with mild regularity for the
  \uppercase{B}oltzmann equation},
        date={2013},
     journal={Kinetic and Related Models},
      volume={6},
      number={4},
       pages={1011\ndash 1041},
}

\bib{alcagamo13}{article}{
      author={Alonso, R.},
      author={Ca{\~n}izo, J.~A.},
      author={Gamba, I.},
      author={Mouhot, C.},
       title={A new approach to the creation and propagation of exponential
  moments in the {B}oltzmann equation},
        date={2013},
        ISSN={0360-5302},
     journal={Comm. Partial Differential Equations},
      volume={38},
      number={1},
       pages={155\ndash 169},
         url={http://dx.doi.org/10.1080/03605302.2012.715707},
      review={\MR{3005550}},
}

\bib{Ar2011}{article}{
      author={Arsenio, D.},
       title={On the global existence of mild solutions to the
  \uppercase{B}oltzmann equation for small data in \uppercase{$L^D$}},
        date={2011},
     journal={Comm. Math. Phys.},
      volume={302},
      number={2},
       pages={453\ndash 476},
}

\bib{bo97}{article}{
      author={Bobylev, A.~V.},
       title={Moment inequalities for the {B}oltzmann equation and applications
  to spatially homogeneous problems},
        date={1997},
        ISSN={0022-4715},
     journal={J. Statist. Phys.},
      volume={88},
      number={5-6},
       pages={1183\ndash 1214},
         url={http://dx.doi.org/10.1007/BF02732431},
      review={\MR{1478067 (99e:82077)}},
}

\bib{BGSR2015III}{article}{
      author={Bodineau, T.},
      author={Gallagher, I.},
      author={Saint-Raymond, L.},
       title={From hard spheres dynamics to the
  \uppercase{S}tokes-\uppercase{F}ourier equations: an $\uppercase{L}^2$
  analysis of the \uppercase{B}oltzmann-\uppercase{G}rad limit},
        date={2015},
     journal={arXiv:1511.03057},
}

\bib{BGSR2015}{article}{
      author={Bodineau, T.},
      author={Gallagher, I.},
      author={Saint-Raymond, L.},
       title={The \uppercase{B}rownian motion as the limit of a deterministic
  system of hard spheres},
        date={2015},
     journal={Invent. math.},
}

\bib{BD2000}{article}{
      author={Boudin, L.},
      author={Desvillettes, L.},
       title={On the singularities of the global small solutions of the full
  \uppercase{B}oltzmann equation},
        date={2000},
     journal={Monatshefte f{\"u}r Mathematik},
      volume={131},
      number={2},
       pages={91\ndash 108},
}

\bib{CP1996}{article}{
      author={Castella, F.},
      author={Perthame, B.},
       title={Estimations de \uppercase{S}trichartz pour les \'{e}quations de
  transport cin\'{e}tique},
        date={1996},
     journal={C. R. Acad. Sci. Paris S\'{e}r. I Math.},
      volume={322},
      number={6},
       pages={535\ndash 540},
}

\bib{CIP1994}{book}{
      author={Cercignani, C.},
      author={Illner, R.},
      author={Pulvirenti, M.},
       title={The mathematical theory of dilute gases},
   publisher={Springer Verlag},
        date={1994},
}

\bib{CDP2017}{article}{
      author={Chen, T.},
      author={Denlinger, R.},
      author={Pavlovi{\' c}, N.},
       title={Local well-posedness for \uppercase{B}oltzmann's equation and the
  \uppercase{B}oltzmann hierarchy via \uppercase{W}igner transform},
        date={2017},
}

\bib{PC2010}{article}{
      author={Chen, T.},
      author={Pavlovi{\' c}, N.},
       title={On the \uppercase{C}auchy problem for focusing and defocusing
  \uppercase{G}ross-\uppercase{P}itaevskii hierarchies},
        date={2010},
     journal={Discr. Contin. Dyn. Syst. A},
      volume={27},
      number={2},
       pages={715\ndash 739},
}

\bib{PC2011}{article}{
      author={Chen, T.},
      author={Pavlovi{\' c}, N.},
       title={The quintic \uppercase{NLS} as the mean field limit of a boson
  gas with three-body interactions},
        date={2011},
     journal={Journal of Functional Analysis},
      volume={260},
      number={4},
       pages={959\ndash 997},
}

\bib{DPL1989}{article}{
      author={DiPerna, R.~J.},
      author={Lions, P.-L.},
       title={On the \uppercase{C}auchy problem for \uppercase{B}oltzmann
  equations: \uppercase{G}lobal existence and weak stability},
        date={1989},
     journal={Ann. Math.},
      volume={130},
      number={2},
       pages={321\ndash 366},
}

\bib{Du2008}{article}{
      author={Duan, Renjun},
       title={On the \uppercase{C}auchy problem for the \uppercase{B}oltzmann
  equation in the whole space: \uppercase{G}lobal existence and uniform
  stability in \uppercase{$L^2_\xi (H^N_x)$}},
        date={2008},
     journal={Journal of Differential Equations},
      volume={244},
      number={12},
       pages={3204\ndash 3234},
}

\bib{GSRT2014}{article}{
      author={Gallagher, I.},
      author={Saint-Raymond, L.},
      author={Texier, B.},
       title={From \uppercase{N}ewton to \uppercase{B}oltzmann: Hard spheres
  and short-range potentials},
        date={2014},
     journal={Zurich Lec. Adv. Math.},
}

\bib{gapavi09}{article}{
      author={Gamba, I.~M.},
      author={Panferov, V.},
      author={Villani, C.},
       title={Upper {M}axwellian bounds for the spatially homogeneous
  {B}oltzmann equation},
        date={2009},
        ISSN={0003-9527},
     journal={Arch. Ration. Mech. Anal.},
      volume={194},
      number={1},
       pages={253\ndash 282},
         url={http://dx.doi.org/10.1007/s00205-009-0250-9},
      review={\MR{2533928 (2011e:35252)}},
}

\bib{GS2011}{article}{
      author={Gressman, P.~T.},
      author={Strain, R.~M.},
       title={Global classical solutions of the \uppercase{B}oltzmann equation
  without angular cut-off},
        date={2011},
     journal={J. Amer. Math. Soc.},
      volume={24},
      number={3},
       pages={771\ndash 847},
}

\bib{Gu2003}{article}{
      author={Guo, Y.},
       title={Classical solutions to the \uppercase{B}oltzmann equation for
  molecules with an angular cutoff},
        date={2003},
     journal={Archive for Rational Mechanics and Analysis},
      volume={169},
      number={4},
       pages={305\ndash 353},
}

\bib{KS1984}{article}{
      author={Kaniel, S.},
      author={Shinbrot, M.},
       title={The \uppercase{B}oltzmann equation: \uppercase{I}.
  \uppercase{U}niqueness and local existence},
        date={1978},
     journal={Communications in Mathematical Physics},
      volume={58},
      number={1},
       pages={65\ndash 84},
}

\bib{Ki1975}{thesis}{
      author={King, F.},
       title={\uppercase{BBGKY} hierarchy for positive potentials},
        type={Ph.D. Thesis},
        date={1975},
}

\bib{KM2008}{article}{
      author={Klainerman, S.},
      author={Machedon, M.},
       title={On the uniqueness of solutions to the
  \uppercase{G}ross-\uppercase{P}itaevskii hierarchy},
        date={2008},
     journal={Comm. Math. Phys.},
      volume={279},
      number={1},
       pages={169\ndash 185},
}

\bib{L1975}{incollection}{
      author={Lanford, O.~E.},
       title={Time evolution of large classical systems},
        date={1975},
   booktitle={Dynamical systems, theory and applications},
      editor={Moser, J.},
      series={Lecture Notes in Physics},
      volume={38},
   publisher={Springer Berlin Heidelberg},
       pages={1\ndash 111},
}

\bib{LZ2001}{article}{
      author={Lu, X.},
      author={Zhang, Y.},
       title={On nonnegativity of solutions of the \uppercase{B}oltzmann
  equation},
        date={2001},
     journal={Transport Theory and Statistical Physics},
      volume={30},
      number={7},
       pages={641\ndash 657},
}

\bib{lumo12}{article}{
      author={Lu, Xuguang},
      author={Mouhot, Cl{\'e}ment},
       title={On measure solutions of the {B}oltzmann equation, part {I}:
  moment production and stability estimates},
        date={2012},
        ISSN={0022-0396},
     journal={J. Differential Equations},
      volume={252},
      number={4},
       pages={3305\ndash 3363},
         url={http://dx.doi.org/10.1016/j.jde.2011.10.021},
      review={\MR{2871802}},
}

\bib{Po1988}{article}{
      author={Polewczak, J.},
       title={Classical solutions of the nonlinear \uppercase{B}oltzmann
  equation in all $\mathbb{R}^3$: asymptotic behavior of solutions},
        date={1988},
     journal={J. Stat. Phys.},
      volume={50},
      number={3},
       pages={611\ndash 632},
}

\bib{algapata15}{article}{
      author={Taskovi\'c, Maja},
      author={Alonso, Ricardo~J.},
      author={Gamba, Irene~M.},
      author={Pavlovi\'c, Nata\v~sa},
       title={On {M}ittag-{L}effler moments for the {B}oltzmann equation for
  hard potentials without cutoff},
        date={2018},
        ISSN={0036-1410},
     journal={SIAM J. Math. Anal.},
      volume={50},
      number={1},
       pages={834\ndash 869},
         url={https://doi-org.ezproxy.lib.utexas.edu/10.1137/17M1117926},
      review={\MR{3759871}},
}

\bib{Uk1974}{article}{
      author={Ukai, S.},
       title={On the existence of global solutions of mixed problem for the
  non-linear \uppercase{B}oltzmann equation},
        date={1974},
     journal={Proc. Japan Acad.},
      volume={50},
      number={3},
       pages={179\ndash 184},
}

\bib{UY2006}{article}{
      author={Ukai, S.},
      author={Yang, T.},
       title={The \uppercase{B}oltzmann equation in the space \uppercase{$L^2
  \cap L^\infty_\beta$}: \uppercase{G}lobal and time-periodic solutions},
     journal={Analysis and Applications},
      volume={4},
      number={3},
       pages={263\ndash 310},
}

\bib{Vi2002}{book}{
      author={Villani, C.},
       title={A review of mathematical topics in collisional kinetic theory},
      series={Handbook of mathematical fluid dynamics},
   publisher={North-Holland},
        date={2002},
      volume={1},
}

\end{biblist}
\end{bibdiv}

\end{document}